\theoremstyle:=definition,remark,plain\do{%
      \expandafter\g@addto@macro\csname th@\theoremstyle\endcsname{%
        \addtolength\thm@preskip\parskip
        }%
      }
  \newtheorem{theorem}{Theorem}[section]
  \newtheorem{proposition}[theorem]{Proposition}
  \newtheorem{corollary}[theorem]{Corollary}
  \newtheorem{lemma}[theorem]{Lemma}
  \theoremstyle{definition}
  \newtheorem{definition}[theorem]{Definition}
  \newtheorem{remark}[theorem]{Remark}
  \newtheorem{claim}[theorem]{Claim}
  \newtheorem*{claim*}{Claim}
  \newtheorem*{question*}{Question}
  \newtheorem*{answer*}{Answer}
  \newtheorem*{application*}{Application}
  \newcommand{\secref}[1]{Section~\ref{Sec:#1}}
  \newcommand{\thmref}[1]{Theorem~\ref{Thm:#1}}
  \newcommand{\corref}[1]{Corollary~\ref{Cor:#1}}
  \newcommand{\lemref}[1]{Lemma~\ref{Lem:#1}}
  \newcommand{\propref}[1]{Proposition~\ref{Prop:#1}}
  \newcommand{\defref}[1]{Definition~\ref{Def:#1}}
  \newcommand{\eqnref}[1]{Equation~\ref{Eqn:#1}}
  \newcommand{\calC}{\mathcal{C}}
  \newcommand{\calL}{\mathcal{L}}
  \newcommand{\calP}{\mathcal{P}}
  \newcommand{\calQ}{\mathcal{Q}}
  \newcommand{\calT}{\mathcal{T}}
  \newcommand{\calX}{\mathcal{X}}
  \newcommand{\calY}{\mathcal{Y}}
   \newcommand{\g}{{\bf g}}
  \DeclareMathOperator{\diam}{diam}
  \DeclareMathOperator{\I}{i}
  \DeclareMathOperator{\Log}{Log}
   \DeclareMathOperator{\len}{length}
  \DeclareMathOperator{\ssm}{\smallsetminus}
  \newcommand{\emul}{\stackrel{{}_\ast}{\asymp}}
  \newcommand{\gmul}{\stackrel{{}_\ast}{\succ}}
  \newcommand{\lmul}{\stackrel{{}_\ast}{\prec}}
  \newcommand{\eadd}{\stackrel{{}_+}{\asymp}}
  \newcommand{\ladd}{\stackrel{{}_+}{\prec}}
   \newcommand{\bd}{\partial}
  \newcommand{\HH}{\ensuremath{\mathbf{H}}\xspace}
  \newcommand{\RR}{\ensuremath{\mathbf{R}}\xspace}
  \newcommand{\s}{\ensuremath{S}\xspace} 
  \newcommand{\T}{\ensuremath{\calT}\xspace} 
  \newcommand{\Lam}{\ensuremath{\mathcal{GL}}\xspace} 
  \newcommand{\set}[1]{\ensuremath{\left\{ {#1} \right\}}\xspace} 
  \newcommand{\st}{\ensuremath{\,\,\, \colon \,\,\,}\xspace}
  \newcommand{\Teich}{{Teichm\"uller }} 
  \newcommand{\ep}{\ensuremath{\epsilon}\xspace} 
  \newcommand{\dth}{\ensuremath{d_{\text{Th}}}\xspace} 
  \newcommand{\dH}{\ensuremath{d_{\HH}}\xspace}
  \newcommand{\talpha}{{\widetilde\alpha}}
  \newcommand{\balpha}{{\overline\alpha}}
  \newcommand{\tbeta}{{\widetilde\beta}}
  \newcommand{\tlambda}{{\widetilde\lambda}}
  \newcommand{\tomega}{{\widetilde\omega}}
  \newcommand{\tgamma}{{\widetilde\gamma}}
  \newcommand{\bgamma}{{\overline\gamma}}
  \newcommand{\blambda}{{\overline\lambda}}
  \newcommand{\bomega}{{\overline\omega}}
  \newcommand{\teta}{{\widetilde\eta}}
  \newcommand{\tf}{{\widetilde f}}
  \newcommand{\tl}{{\widetilde l}}
  \newcommand{\tp}{{\widetilde p}}
  \newcommand{\tX}{{\widetilde X}}
  \newcommand{\param}{{\mathchoice{\mkern1mu\mbox{\raise2.2pt\hbox{$
  \centerdot$}}
  \mkern1mu}{\mkern1mu\mbox{\raise2.2pt\hbox{$\centerdot$}}\mkern1mu}{
  \mkern1.5mu\centerdot\mkern1.5mu}{\mkern1.5mu\centerdot\mkern1.5mu}}}
  \newcommand{\from}{\colon\,}
  \numberwithin{equation}{section}
\begin{document}


  \title{Thurston geodesics: no backtracking and active intervals}
  \author   {Anna Lenzhen} 
  \address  {Department of Mathematics\\
             University of Rennes\\
             Rennes, FRANCE} 
  \email    {anna.lenzhen@univ-rennes1.fr}
  \author   {Babak Modami} 
  \address  {School of Mathematics\\
            Tata Institute of Fundamental Research \\
             Mumbai, IN}
  \email    {bmodami@math.tifr.res.in}
  \author   {Kasra Rafi}
  \address  {Department of Mathematics\\
             University of Toronto\\
             Toronto, CANADA} 
  \email    {rafi@math.toronto.edu}
  \author   {Jing Tao}
  \address  {Department of Mathematics\\
             University of Oklahoma\\
             Norman, OK 73019-0315, USA}
  \email    {jing@ou.edu}

  \maketitle
  \thispagestyle{empty}
 
  \begin{abstract}

     We develop the notion of the {\em active interval} for a subsurface
     along a geodesic in the Thurston metric on \Teich space of a surface
     $S$. That is, for any geodesic in the Thurston metric and any
     subsurface $R$ of $S$, we find an interval of times where the length
     of the boundary of $R$ is uniformly bounded and the restriction of the
     geodesic to the subsurface $R$ resembles a geodesic in the \Teich
     space of $R$. In particular, the set of short curves in $R$ during the
     active interval represents a reparametrized quasi-geodesic in the
     curve graph of $R$ (no backtracking) and the amount of movement in the
     curve graph of $R$ outside of the active interval is uniformly bounded
     which justifies the name \emph{active interval}. These intervals
     provide an analogue of the active intervals introduced by the third
     author in the setting of \Teich space equipped with the \Teich metric. 
    
  \end{abstract}
  
  \setcounter{tocdepth}{2}


\section{Introduction}

   This is the third paper in the series \cite{LRT1,LRT2} where the Thurston
   metric on \Teich space and the  behavior of its geodesics is studied. 
   Let \s be a surface of finite hyperbolic type and let $\T(S)$ be the
   \Teich space of \s equipped with the Thurston metric $\dth$ (see
   \secref{ThurstonMetric} for definition). It is well known that there may
   not be a unique Thurston geodesic connecting two points $X,Y \in \T(\s)$
   (see \cite{thurston-stretch} or \cite[Theorem 1.1]{LRT1}). In general,
   for a subsurface $R \subseteq S$, the projection of a geodesic segment
   $[X,Y]$ to the \Teich space of $R$ can be essentially any path in
   $\T(R)$. Heuristically, this can be understood as follows:  
   let $X,Y$ be points in $\T(\s)$ and let $R$, $R'$ be disjoint subsurfaces
   such that the length of $\bd R$ is short in both $X$ and $Y$, the
   restriction of the hyperbolic structures of $X$ and $Y$ to $R$ is the same 
   but the restriction of $X$ and $Y$ to $R'$ is very different. Then any geodesic
   segment from $X$ to $Y$ has to move efficiently in $R'$ but it has the
   freedom to modify the hyperbolic structure inside $R$ in different ways
   before returning back to the original structure (see \cite[Section 6]{LRT2}
   for a detailed examples of such phenomena). In other words, a copy 
   of $\calT(R) \times \calT(R')$ equipped with the $L^\infty$--metric embeds 
   nearly isometrically in $\calT(S)$ (see \cite[Theorem 3.5]{compteichlip})
   which causes non-uniqueness of geodesics similar to that of $\RR^2$
   equipped with the $L^\infty$--metric. 
   
   In this paper we will show that, in a coarse sense, this is essentially
   the only phenomenon responsible for the non uniqueness of geodesics.  We
   now make this precise.
   
  For a point $X \in \T(S)$, let $\mu_X$ be the \emph{short marking} of $X$
  (see \secref{marking} for the definition).
  For a subsurface $R\subseteq S$, let $\calC(R)$ be the curve graph of 
  the subsurface $R$. Now define   
   \[\Upsilon_R: \T(S) \to \calC(R)\] to be the map which
   assigns to a point $X$ the projection of $\mu_X$ on $X$ to $\calC(R)$ (see \defref{pi}). 
   We call $\Upsilon_R$ the shadow of $X$ to the curve complex of $R$.

   Let $\g \from I \to \T(S)$ be a geodesic segment from $X$ to $Y$ and let
   $\lambda_\g$ be the maximally stretched lamination from $X$ to $Y$
   introduced by Thurston \cite{thurston-stretch} (see $\S 2$ for
   definition and discussion). We often denote $\g(t)$ simply by $X_t$.    
   The following theorem summarizes the main results of this paper which
   are analogues of some of the results of Rafi in
   \cite{rshteich,rcombteich,rteichhyper} about the behavior of \Teich
   geodesics. Also some similar results about the behavior of
   Weil-Petersson geodesics in \cite{bmm1,bmm2,wpbehavior, asympdiv, wpbtneck}. 
    
   \begin{theorem}\label{thm:main} 
     
     Let $\g \colon [a,b] \to \T(\s)$ be a geodesic in the Thurston metric
     with maximally stretched lamination $\lambda_\g$. There are positive
     constants $\rho$ and $K$ such that, for any non-annular subsurface $R
     \subseteq \s$ that intersects $\lambda_\g$, there is a possibly empty
     interval of times $J_R=[c,d] \subset [a,b]$, with the following
     properties: 
     \begin{enumerate} 
       \item For any $t \in J_R$, we have $\ell_t(\bd{R}) \le \rho$. 
       \item If $s,t \in I$ are on the same side of $J_R$ ($s,t<c$ or
         $s,t>d$), then \[ d_R(X_r, X_t) \le K.\] 
     If $J_R$ is empty, this holds for every $s, t \in I$.  
       \item The shadow of $\g$ to $R$, $\Upsilon_R \circ \g \big|_{J_R} :J_R \to \calC(R)$, is a
         reparametrized quasi-geodesic in $\calC(R)$. 
     \end{enumerate}
   \end{theorem}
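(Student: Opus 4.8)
The plan is to take the active interval to be the set of times at which $\bd R$ is short, to upgrade this set to a genuine interval using the available control on hyperbolic lengths along Thurston geodesics, and then to extract (1)--(3) from a bounded geodesic image theorem together with the almost-isometric product structure of the thin part. Fix a Margulis constant $\epsilon_0$ for $S$ and a larger constant $\rho=\rho(\epsilon_0,S)$ to be specified below, and set
\[
  J_R=\bigl\{\,t\in[a,b]\st \ell_t(\bd R)\le\epsilon_0\,\bigr\},\qquad c=\inf J_R,\quad d=\sup J_R,
\]
with $J_R=\emptyset$ if no such $t$ exists, in which case (1) and (3) are vacuous and (2) is asserted on all of $[a,b]$.

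The first task is the \emph{interval lemma}: if $J_R\neq\emptyset$ then $\ell_t(\bd R)\le\rho$ for all $t\in[c,d]$. Since $\bd R$ has finitely many components it suffices to treat one component $\alpha$, i.e.\ to show that $\ell_s(\alpha)\le\epsilon_0$ and $\ell_{s'}(\alpha)\le\epsilon_0$ with $s\le s'$ force $\ell_t(\alpha)\le\rho$ on $[s,s']$. Here I would invoke the description of how hyperbolic length evolves along a Thurston geodesic from \cite{LRT1,LRT2}: combined with Thurston's inequality $\ell_t(\alpha)\le e^{t-s}\ell_s(\alpha)$ for $t\ge s$, it should rule out a definite ``bump'' of $t\mapsto\ell_t(\alpha)$ between two short values, i.e.\ this function coarsely decreases and then increases, like the length of an unstretched curve along a stretch line. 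Part (1) is then immediate with this $\rho$.

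Part (2) reduces to a \emph{bounded geodesic image} statement: if $\ell_t(\bd R)\ge\epsilon_0$ for all $t$ in a subinterval $J'\subseteq[a,b]$, then $\diam_{\calC(R)}\bigl(\Upsilon_R\circ\g|_{J'}\bigr)\le K$; applying this to $[a,c)$ and $(d,b]$ --- which are such subintervals by the definitions of $c,d$, and to all of $[a,b]$ when $J_R=\emptyset$ --- gives (2). I would prove it by using that the whole-surface shadow $\Upsilon_S\circ\g$ is a reparametrized quasi-geodesic in the hyperbolic graph $\calC(S)$ (the ``no backtracking'' statement, which I take to be established beforehand), noting that over $J'$ this quasi-geodesic stays uniformly far from $\pi_S(\bd R)$ in $\calC(S)$ --- this is where the hypothesis that $\bd R$ is never $\epsilon_0$--short enters, via the fact that a short marking is $\calC(S)$--close to $\pi_S(\bd R)$ only when $\bd R$ is short --- and then invoking the Masur--Minsky bounded geodesic image theorem in $\calC(S)$ and transferring its conclusion to $\Upsilon_R\circ\g$.

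For (3), the interval lemma puts $\g|_{[c,d]}$ in the region of $\calT(S)$ that is almost isometric to the $L^\infty$--product $\calT(R)\times\calT(S\setminus R)$ by \cite[Theorem 3.5]{compteichlip}; let $\g_R$ be its $\calT(R)$--coordinate. Since $R$ meets $\lambda_\g$, the restriction $\lambda_\g\cap R$ is stretched at essentially the maximal rate inside $R$, so $\g_R$ coarsely fellow-travels a Thurston geodesic of $\calT(R)$, and it suffices to show that the $\calC(R)$--shadow of a Thurston geodesic of $\calT(R)$ is a reparametrized quasi-geodesic. This I would prove by a local-to-global argument in the hyperbolic graph $\calC(R)$: the shadow is coarsely Lipschitz (the short-marking-to-curve-graph map is coarsely Lipschitz for $\dth$) and makes linear progress, and it has no backtracking because for every proper essential $Q\subsetneq R$ the diameter of its $\calC(Q)$--shadow is bounded outside an interval --- which is exactly (1)--(2) of the theorem applied to $Q$. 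This makes the whole argument an induction on the complexity of $R$, with the whole-surface no-backtracking theorem as the base input. I expect the genuine obstacles to be the interval lemma --- where the asymmetry of $\dth$ makes a short--long--short pattern hard to exclude a priori --- and the bounded geodesic image theorem for $\dth$; once those are in place, (1)--(3) assemble as above.
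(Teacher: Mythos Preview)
Your proposal diverges from the paper's route and has a concrete gap in the argument for (2), in addition to the two obstacles you already flag. For (2) you want to apply Masur--Minsky bounded geodesic image by showing that on $J'$ the $\calC(S)$--shadow of $\g$ stays far from $\partial R$, deducing this from ``a short marking is $\calC(S)$--close to $\pi_S(\partial R)$ only when $\partial R$ is short.'' That implication is false: whenever $X_t$ carries any $\epsilon_B$--short curve disjoint from $\partial R$, one has $d_S(\mu_{X_t},\partial R)\le 2$ regardless of $\ell_t(\partial R)$, so the hypothesis $\ell_t(\partial R)\ge\epsilon_0$ on $J'$ does not keep the shadow away from $\partial R$ and BGI gives nothing. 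Your interval lemma is likewise unavailable: no convexity or coarse-unimodality result for $t\mapsto\ell_t(\alpha)$ along Thurston geodesics appears in \cite{LRT1,LRT2}, and the asymmetry makes a short--long--short pattern genuinely hard to exclude directly. Finally, the reduction in (3) via the product region to ``$\g_R$ is coarsely a Thurston geodesic in $\calT(R)$'' is not justified: an $L^\infty$--geodesic projects only to a $1$--Lipschitz path in each factor, and $\lambda_\g$ intersecting $R$ does not by itself force the $R$--factor to move at maximal rate, so you cannot feed $\g_R$ back into the theorem inductively.

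The paper proceeds quite differently and does \emph{not} define $J_R$ by the length of $\partial R$. Instead $d$ is the first time $\partial R$ becomes \emph{horizontal} (roughly: $\partial R$ fellow-travels $\lambda_\g$ for a long stretch), and $c$ is the first time in $[a,d]$ with $d_R(X_t,\lambda_\g)\ge 2B$. The engine is a trichotomy: at every time either $\ell_t(\partial R)\le\rho$, or $\partial R$ is horizontal, or $d_R(X_t,\lambda_\g)\le 1$. Part (1) then falls out because on $(c,d)$ the second and third alternatives are excluded by construction; part (2) on $[a,c]$ is immediate from $d_R(X_t,\lambda_\g)<2B$ and the triangle inequality, and on $[d,b]$ from the fact that once $\partial R$ is horizontal its length grows essentially exponentially, which freezes the $\calC(R)$--projection. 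Part (3) is obtained not by induction on complexity or product-region reduction but by building directly a coarse Lipschitz retraction $\calC(R)\to\Upsilon_R(\g([c,d]))$, sending each curve $\alpha$ to $\Upsilon_R(X_{t_\alpha})$ where $t_\alpha$ is the first time $\alpha$ becomes horizontal; the retraction property plus hyperbolicity of $\calC(R)$ give the reparametrized quasi-geodesic.
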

  
   The key assumption in the above theorem is that $R$ intersects 
   $\lambda_\g$. This restricts the freedom of changing the structure of 
   subsurface $R$ arbitrarily. In particular, if $\lambda_\g$ is a filling lamination
   (meaning it intersects every essential simple closes curve on $S$), 
   the shadow of a Thurston geodesic to every surface is a reparametrized quasi-geodesic.
   At the end of the paper, we will show that the assumption that $\lambda_\g$ is
   filling can be replaces a assumption that $\lambda_\g$ is nearly filling   
   (see  \defref{Nearly-filling}) which should be interpreted to mean, from the point of 
   view of the hyperbolic metric $X$, $\lambda_\g$ looks like a filling lamination. 
   
     \begin{theorem}\label{Thm:filling}

    Let $\g \colon [a,b] \to \T(\s)$ be a Thurston geodesic. If $\lambda_\g$ is
    nearly filling on $X_a$, then for any subsurface $R \subseteq \s$,
    the set $\Upsilon_R(\g([a,b]))$ is a reparametrized quasi-geodesic in
    $\calC(R)$.
    
  \end{theorem}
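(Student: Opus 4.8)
The plan is to deduce Theorem~\ref{Thm:filling} from Theorem~\ref{thm:main} by a case analysis on the position of $R$ relative to $\lambda_\g$, supported by two soft observations: that attaching bounded-diameter pieces to the ends of a reparametrized quasi-geodesic in $\calC(R)$ yields again a reparametrized quasi-geodesic, and that a subsurface disjoint from $\lambda_\g$ is essentially invisible to the geodesic once $\lambda_\g$ is nearly filling. Throughout I freely pass between the subsurface projection distance $d_R$ and the $\calC(R)$--distance of the corresponding projected points, at the cost of a universal additive error.

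\textbf{Step 1 (a gluing lemma).} First I would record an elementary fact, valid in any geodesic metric space and in particular in $\calC(R)$: if $P\colon[a,b]\to\calC(R)$ is a path, $[c,d]\subseteq[a,b]$ a subinterval with $P\big|_{[c,d]}$ a reparametrized quasi-geodesic, and $\diam P([a,c])\le K$ and $\diam P([d,b])\le K$, then $P$ is itself a reparametrized quasi-geodesic, with the same multiplicative constant and additive constant enlarged by $O(K)$. The proof is to take the monotone map $f_0$ witnessing that $P\big|_{[c,d]}$ is a reparametrized quasi-geodesic, extend it by constants over $[a,c]$ and $[d,b]$, and verify the two quasi-geodesic inequalities by splitting $s,t$ according to which of the three subintervals they lie in, absorbing each occurrence of $\diam P([a,c])$ or $\diam P([d,b])$ into the additive constant via the triangle inequality. \textbf{Step 2 ($R$ meets $\lambda_\g$).} Suppose $R$ intersects $\lambda_\g$. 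If $R$ is non-annular, Theorem~\ref{thm:main} gives an active interval $J_R=[c,d]\subseteq[a,b]$; item~(3) says $\Upsilon_R\circ\g\big|_{J_R}$ is a reparametrized quasi-geodesic, and item~(2), applied to pairs of times on one side of $J_R$, gives $\diam\,\Upsilon_R(\g([a,c]))\le K+O(1)$ and $\diam\,\Upsilon_R(\g([d,b]))\le K+O(1)$ (the additive $O(1)$ accounting both for the endpoint of $J_R$ and for the passage between $d_R$ and $\calC(R)$--distance). Step~1 with $P=\Upsilon_R\circ\g$ then gives the conclusion; and when $J_R$ is empty, item~(2) gives $\diam\,\Upsilon_R(\g([a,b]))\le K+O(1)$, a degenerate reparametrized quasi-geodesic. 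If $R$ is an annulus, one argues in the same way using the version of Theorem~\ref{thm:main} for annular subsurfaces (or, failing that, a direct estimate bounding the annular projection outside of an active interval).

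\textbf{Step 3 ($R$ disjoint from $\lambda_\g$).} Suppose now $R$ does not intersect $\lambda_\g$. Then every simple closed curve carried by $R$ --- in particular each component of $\bd R$ --- is disjoint from $\lambda_\g$, so by \defref{Nearly-filling} it has length at most the nearly-filling threshold $\epsilon_0$ in $X_a$. I would then invoke the control on the lengths of curves that miss $\lambda_\g$ along a Thurston geodesic (the monotonicity and no-backtracking estimates of the earlier sections of the paper and of \cite{LRT1,LRT2}) to conclude that $\ell_t(\gamma)$ stays bounded above by a universal constant for every curve $\gamma\subseteq R$ and every $t\in[a,b]$. Hence $X_t|_R$ remains in a fixed bounded region of $\T(R)$, the short marking $\mu_{X_t}$ projects into $\calC(R)$ within a set of uniformly bounded diameter, and therefore $\diam\,\Upsilon_R(\g([a,b]))$ is uniformly bounded; a bounded set is trivially a reparametrized quasi-geodesic. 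Combining the two cases finishes the proof.

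The main obstacle I anticipate is the input to Step~3: upgrading the hypothesis ``$\lambda_\g$ is nearly filling on $X_a$'', which speaks only of the initial point, to uniform control over the whole interval $[a,b]$. When $R$ meets $\lambda_\g$ this upgrade is performed for us by Theorem~\ref{thm:main}; in the disjoint case one must instead show directly that the Thurston geodesic performs no essential motion inside a subsurface missing the maximally stretched lamination --- intuitively because no stretching is forced along curves disjoint from $\lambda_\g$ --- and extract a uniform quantitative bound. The remaining difficulties are purely bookkeeping: the position-case verification in Step~1, the systematic passage between $d_R$ and $\calC(R)$--distance, the coarse continuity of the shadow at the endpoints of $J_R$, and the consistent treatment of annuli in Step~2.
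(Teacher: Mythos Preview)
Your Step~2 is essentially the paper's argument (via \corref{Reparam} and Remark~\ref{BalancedTime}), and the gluing lemma in Step~1 is fine. The genuine gap is Step~3.

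First, you have misread \defref{Nearly-filling}: the hypothesis is $d_\s(X_a,\lambda_\g)\ge 4B$, a curve-complex distance condition on the short marking of $X_a$, not a length bound. There is no ``nearly-filling threshold $\epsilon_0$'' and no assertion that curves disjoint from $\lambda_\g$ are short on $X_a$. Second, and more seriously, the mechanism you propose --- that ``no stretching is forced along curves disjoint from $\lambda_\g$'' so the geodesic performs no essential motion inside $R$ --- is exactly the heuristic the paper's introduction warns \emph{fails}: when $R$ misses $\lambda_\g$, the restriction of a Thurston geodesic to $R$ can be essentially any path in $\T(R)$, and this is the source of non-uniqueness. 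So one cannot expect, and there is no result in the paper or in \cite{LRT1,LRT2} asserting, uniform length control on curves in $R$ along $[a,b]$ from the disjointness alone.

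The paper handles the disjoint case by a completely different route. One applies \thmref{Filling}: under the nearly-filling hypothesis, every curve disjoint from $\lambda_\g$ --- in particular each component of $\bd R$ --- is \emph{horizontal} on $X_t$ for all $t\in[a,b]$. The proof of \thmref{Filling} uses the nearly-filling bound to force $d_R(X_t,\lambda_\g)>1$ for all $t$ via \lemref{AI}, then the trichotomy (\propref{Trichotomy}) and the active-interval machinery to rule out the non-horizontal alternatives. Once $\bd R$ is horizontal on $X_a$, \propref{Horizontal2} gives $\diam_R(\g([a,b]))=O(1)$ directly. So the missing ingredient is not a length estimate but the horizontality of $\bd R$, which is precisely what the nearly-filling hypothesis buys.
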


  The main technical tool of this paper is to further develop the notion of
  a horizontal curve along a Thurston geodesic. This notion was first
  introduced in \cite{LRT2}, in analogy with the existing picture for the
  behavior of a curve along a \Teich geodesic, as developed in
  \cite{mm1,rcombteich,rteichhyper}. In essence, a curve $\alpha$ is
  horizontal if it spends a significant portion of its length fellow
  traveling $\lambda_\g$. It was shown in \cite{LRT2} that horizontal
  curves stay horizontal (in fact get more horizontal) along a Thurston
  geodesic and the length of the portion of $\alpha$ that fellow travels
  $\lambda_\g$ grows fast. We generalize this to a curve inside a
  subsurface (\thmref{Horizontal}). We then establish a trichotomy
  (Proposition \ref{Prop:Trichotomy}) stating that, at every time $t$ and
  for every subsurface $R$ either $\partial R$ is horizontal in $X_t$,
  $\partial R$ is short at $X_t$ or $\partial R$ is vertical meaning that
  the projection of $X_t$ and $\lambda_\g$ to $\calC(R)$ are close. This
  implies that the only time the shadow of $\g$ to $R$ can change is when
  $\partial R$ is short. 

  \subsection*{Notations}
  
  Here we list some notation and conventions that are used in this paper. 
  We will call a constant $C$ \textit{universal} if it depends only on the
  topological type of  a surface. Given two quantities $a$ and $b$,  we
  will write $$a\lmul b$$ if there is a universal constant $C$ such that
  $a\leq Cb$. 
 
  Similarly, we write $$a\ladd b$$ if  $a\leq b+C$ for a universal constant $C$.

  We write $a\emul b$ if $a\lmul b$ and $b\lmul a$. Likewise,  $a\eadd b$
  if $a\ladd b$ and $b\ladd a$.

  
  We define the logarithmic cut-off function $ \Log:\mathbb{R}^+\to [1,\infty)$ as 
   \begin{equation}\label{eqn:Log}
   \Log x:=\max\Big\{\ln x, 1\Big\},
  \end{equation} 
  where $\ln x$ is the  natural logarithm of $x$.

  For the convenient of the reader, we also list the named constants that are used throughout the paper. 
  \begin{itemize}
    \item $\ep_B$ = Bers constant
    \item $\ep_w$ = weakly-horizontal threshold constant
    \item $\ep_h$ = horizontal threshold constant
    \item $\rho$ = boundary of $R$ threshold constant
    \item $s_0$   = Theorem \ref{Thm:Horizontal} 
    \item $n_0$   = horizontal intersection constant 
    \item $L_0$   = horizontal fellow traveling constant
    \item $w_\ep$ = .5 width of the standard collar about a curve of $\ep$
      length.
    \item $b_\ep$ = boundary length of the standard collar about a curve of
      $\ep$ length
    \item $L_\ep$ = Lemma \ref{Lem:Long}
    \item $A$ = Lemma \ref{Lem:ConstantA}
    \item $B$ = Lemma \ref{Lem:ConstantB}
    \item $C(n,L)$ = Lemma \ref{Lem:Corridor1}
    \item $M$ = Lemma \ref{Lem:ConstantM}
  \end{itemize}

   \subsection*{Acknowledgments} 

   The third named author was partially supported by NSERC Discovery grant
   RGPIN 05507. The last author was partially supported by NSF DMS-1651963.
  
\section{Background}

  \subsection{Metric spaces and coarse maps} 
  \label{subsec:cc}

  Let $C>0$, and let $\calX$ and $\calY$ be two metric spaces.  A multivalued map
  $\phi:\calX\to \calY $ is called a $C$--\textit{coarse map} if for every
  $x \in \calX$,  
  \[\diam_\calY\big(\phi(x)\big) \le C.\] 
  Moreover, for $K\geq 1$, we will say it is  $K$--\textit{Lipschitz} if in addition, for all $x, x' \in \calX$, 
  \[\diam_\calY\big( \phi(x)\cup\phi(x') \big) \le Kd_\calX(x,x').\] 
  Now, let $\calY$ be a subspace of $\calX$ equipped with the
  induced metric. We say a $K$--coarse map $\Pi \colon \calX \to \calY$ is a $K$--\textit{retraction}
  if, for all $y \in \calY$, 
  \[\diam_{\calX}\big(\Pi(y) \cup \set{y}\big) \le K.\]
  
  
  By a \emph{path} in $\calX$ we will mean a coarse map $\phi \colon [a,b]
  \to \calX$. A path $\phi \colon [a,b]  \to \calX$ is a $K$--quasi-geodesic
  if for all $a\le s \le t \le b$, \[ \frac{1}{K}(t-s) -K \le \diam_X \big(
  \phi(s)\cup \phi(t) \big) \le K(t-s) + K.\] We will say $\phi$ is a
  \emph{reparametrized} $K$--quasi-geodesic if there is an increasing
  function $h \colon [0,n] \to [a,b]$ such that $\phi \circ h$ is a
  $K$--quasi-geodesic. 
  When $\calX$ is $\delta$--hyperbolic, then a path $\phi$ is a reparametrized
  $K$--quasi-geodesic for some $K$ if an only if there 
  exists $K'$ such that for all $r,s,t \in [a,b]$, where $r \le s \le t$,
  we have
  \[ 
  \diam_{\calX}(\phi(r) \cup \phi(s)) + \diam_{\calX}(\phi(s) \cup \phi(t))
  \le K' \diam_{\calX}(\phi(r) \cup \phi(t)) + K'. 
  \] 




 

  The following theorem is a standard fact in the coarse geometry; see
  e.g.\ the proof of \cite[Theorem 5.7]{LRT2}. 

  \begin{theorem} \label{Thm:Reparam}

    Let $K\geq 1$ and suppose that $\phi$ is a path in $\calX$ and that there
    exists a (coarse) $K$--Lipschitz retraction map $\Pi \from \calX \to
    \text{Im}(\phi)$. Then $\phi$ is a reparametrized $K'$--quasi-geodesic
    for a $K'\geq 1$ that depends only on $K$. 

  \end{theorem}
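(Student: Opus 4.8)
The plan is to produce the reparametrizing function $h$ by hand and then verify the two halves of the quasi-geodesic inequality separately; the retraction $\Pi$ is needed only for the second half. Write $Y=\text{Im}(\phi)$. In every situation where the theorem is applied $\phi$ is coarsely Lipschitz with a universal constant (it is the shadow of a unit-speed Thurston geodesic, and a shadow map is coarsely Lipschitz), so, rescaling the parameter, I take $d_\calX(\phi(s),\phi(t))\ladd|s-t|$. Fix a large constant $D=D(K)$, to be pinned down at the end. Set $h(0)=a$ and inductively let $h(i+1)$ be the first time $t>h(i)$ with $d_\calX(\phi(h(i)),\phi(t))\ge D$; stop at the first index $n$ for which no such time remains, put $h(n)=b$, and interpolate to an increasing $h\colon[0,n]\to[a,b]$ (or just keep the samples $\phi(h(i))$). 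Coarse Lipschitzness forces $d_\calX(\phi(h(i)),\phi(h(i+1)))\eadd D$ for $i<n-1$ and $\le D$ for the last step, so the upper quasi-geodesic bound $d_\calX(\phi(h(i)),\phi(h(j)))\ladd (j-i)D$ is immediate from the triangle inequality. Everything reduces to a matching lower bound with constants depending only on $K$.

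The lower bound is where $\Pi$ enters, via the no-backtracking estimate: for all $a\le r\le s\le t\le b$,
\[
 d_\calX(\phi(r),\phi(s))+d_\calX(\phi(s),\phi(t))\ladd d_\calX(\phi(r),\phi(t)).
\]
To prove this I would join $\phi(r)$ to $\phi(t)$ by a geodesic $\eta$ (a near-geodesic if $\calX$ is only a length space) and push $\eta$ forward by $\Pi$. Since $\Pi$ is $K$--Lipschitz and, being a retraction, coarsely fixes the points $\phi(r),\phi(t)\in Y$, the path $\Pi(\eta)$ lies inside $Y=\text{Im}(\phi)$, has diameter $\lmul K\,d_\calX(\phi(r),\phi(t))+K$, and runs from within $K$ of $\phi(r)$ to within $K$ of $\phi(t)$. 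As $\phi(s)$ also lies in $Y$ and $\Pi$ retracts $\calX$ onto $Y$, the point $\phi(s)$ must sit in a uniformly bounded neighbourhood of $\Pi(\eta)$, and then walking along $\Pi(\eta)$ with the triangle inequality yields the displayed bound. Granting it, I apply the estimate to the triples $(h(i),h(i+1),h(j))$ and telescope over $i\le\ell<j$: the $\eadd D$ lengths of the consecutive steps accumulate to $d_\calX(\phi(h(i)),\phi(h(j)))\gadd (j-i)D-(j-i)C$ for a universal $C$, so choosing $D$ larger than $3C$ converts this into the desired lower bound. Together with the upper bound, $\phi\circ h$ is a $K'$--quasi-geodesic with $K'$ depending only on $D$, hence only on $K$. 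When $\calX$ is $\delta$--hyperbolic one could instead skip building $h$ and verify the reverse-triangle criterion recorded above.

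The hard part is the one clause I glossed in the middle step: that the intermediate value $\phi(s)$ cannot drift away from the $\Pi$--image of a geodesic joining $\phi(r)$ and $\phi(t)$, with the drift bounded purely in terms of $K$. This is precisely where the retraction hypothesis is indispensable — a general path in $\calX$ can leave and re-enter $Y$ with unbounded excursions — and making it quantitative (the no-backtracking estimate, which in a geodesic space is the statement that $\phi$ coarsely fellow-travels the geodesic between any two of its own values) is the technical core; the reparametrization bookkeeping around it is routine.
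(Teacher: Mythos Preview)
The paper does not prove this statement; it is quoted as a standard fact with a pointer to the reference given, so there is no in-paper argument to compare against. You correctly isolate the one non-routine step and candidly label it ``the technical core,'' but the justification you offer for it is not valid. The clause ``As $\phi(s)$ also lies in $Y$ and $\Pi$ retracts $\calX$ onto $Y$, the point $\phi(s)$ must sit in a uniformly bounded neighbourhood of $\Pi(\eta)$'' does not follow: that both $\phi(s)$ and $\Pi(\eta)$ lie in $Y=\operatorname{Im}(\phi)$ says nothing about their mutual distance, and since $\phi(s)$ is already a coarse fixed point of $\Pi$, applying the retraction to it yields no new information. Without this step neither the additive no-backtracking estimate nor the telescoped lower bound goes through.

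Worse, the \emph{additive} estimate you target is strictly too strong. In $\HH^2$ take $Y$ to be a long geodesic $[A,B]$ together with a perpendicular spike $[M,M']$ of length $R$ at the midpoint $M$, and let $\phi$ trace $A$--$M$--$M'$--$M$--$B$. Then $Y$ is $O(\delta)$--quasi-convex, so nearest-point projection onto $Y$ is a coarse Lipschitz retraction with constants depending only on $\delta$; yet for $r<s<t$ with $\phi(s)=M'$ one has $d(\phi(r),\phi(s))+d(\phi(s),\phi(t))-d(\phi(r),\phi(t))\ge 2R-O(1)$, and the same computation shows $\phi$ is not a reparametrized $K'$--quasi-geodesic for any $K'$ independent of $R$. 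So the bare hypothesis ``there exists a coarse Lipschitz retraction onto $\operatorname{Im}(\phi)$'' does not suffice by itself. A correct argument needs either $\delta$--hyperbolicity together with the \emph{multiplicative} reverse-triangle criterion recorded just above the theorem (which your final sentence gestures at but does not prove), or---as in the paper's actual application---a retraction that factors through the parameter interval via balanced times, supplying the order structure your sketch is missing.
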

 
  In this section we will discuss arcs, curves and subsurfaces on a
  compact, connected, oriented surface \s of genus $g$ with possibly $b$
  punctures or boundary components. The {\em complexity} of \s is defined
  by $\xi(\s):=3g-3+b$. 
    
  A \emph{curve} on \s is a closed connected $1$--manifold on \s defined up
  to free homotopy. We always assume that a curve is essential i.e.\
  non-trivial and non-peripheral.  We say two curves are disjoint if the
  curves have disjoint representatives. A \emph{multicurve} is a union of
  pairwise disjoint curves. An \emph{arc} on \s is a properly embedded
  simple $1$-manifold with boundary where the endpoints lie on $\bd \s$
  defined up to homotopy relative to $\bd \s$. We always assume that an arc
  is not boundary parallel. A \emph{subsurface} $R\subseteq \s$ is a
  compact, connected $2$--dimensional submanifold of $\s$ with boundary
  such that each boundary component of $R$ is either an essential curve or
  a boundary component of \s defined up to homotopy relative to $\partial
  \s$. In particular, the map $\pi_1(R) \to \pi_1(S)$ induced by inclusion
  is injective. 

  Two curves or arcs $\alpha,\beta$ \emph{intersect} if the curves are not
  disjoint. We denote by $\I(\alpha,\beta)$ the minimal number of
  intersections between representatives of $\alpha$ and $\beta$. A curve
  $\alpha$ \emph{essentially intersects} a subsurface $R$ if $\alpha$ does not have a
  representative that is disjoint from $R$; equivalently, $\alpha$ is
  either a curve in $R$ or it intersects a component of $\bd R$. Finally,
  two subsurfaces intersect if they do not have disjoint representatives. 

  \subsection{Pants decompositions and markings} 

  A \emph{pants decomposition} on \s is a multicurve whose complementary
  regions are all 3-holed spheres (or equivalently a maximal set of
  pairwise disjoint curves on $S$); the number of curves in a pants
  decomposition is equal to $\xi(\s)$. A \emph{marking} $\mu$ on \s is a
  pants decomposition $P = \{\alpha_i\}$ together with a set $\overline{P}
  = \{ \balpha_i \}$ of curves such that, for each $i$, $\balpha_i$
  intersects $\alpha_i$ minimally, and $\balpha_i$ does not intersect
  $\alpha_j$ for all $j \ne i$; $\balpha_i$ is called the \emph{transverse
  curve} to $\alpha$. We will usually think of a marking $\mu$ as the set
  of the curves in $P \cup \overline{P}$. 
  
  \subsection{Geodesic laminations}

  For the rest of this paper, we will assume that \s is a surface with
  $\xi(\s) \ge 1$, and by a hyperbolic metric on \s we will mean a complete
  finite-area hyperbolic metric on the interior of \s. 
  
  Now fix a hyperbolic metric on \s. A \emph{geodesic lamination} $\lambda$
  on \s is a closed subset of $S$ that is a disjoint union of simple
  complete geodesics.  These geodesics are called the \emph{leaves} of
  $\lambda$. The space of geodesic laminations, equipped with Hausdorff
  topology, is compact. Moreover, any two different hyperbolic metrics on
  \s determine homeomorphic spaces of geodesic laminations, so the space of
  geodesic laminations $\Lam(\s)$ depends only on the topology of \s.  (see
  \cite{Penner:ttr}, \cite{thurston-stretch} \cite{Bon:lam} for details). 
  
  
  \subsection{Curve graphs and subsurface projection}

  Let $R\subseteq \s$ be a subsurface. The \emph{arc and curve graph}
  $\calC(R)$ of $R$ is defined as follows: If $R$ is non-annular, then
  vertices of $\calC(R)$ are the arcs and curves on $R$ and two vertices
  are connected by an edge if they do not intersect. When $R$ is an
  annulus, the definition of $\calC(R)$ is a bit more involved. In this
  situation, the vertices of the complex are the homotopy classes of arcs
  that connect the boundaries of the natural compactification of the
  annular cover of $S$ corresponding to $R$, and edges correspond to arcs
  with disjoint interiors. 
  

  Assigning length one to each edge turns $\calC(R)$ into a metric graph
  which is Gromov hyperbolic \cite{mm1}. We represent this metric by
  $d_R(\param,\param)$.
    
  In the following we define the \emph{subsurface projection map}  and {\em
  subsurface projection coefficients} that play a crucial role in
  description of our results in the paper. 
  \begin{definition} \label{Def:pi}
  The subsurface projection map
  \[ \pi_R: \Lam (S)\to \calP(\calC(R)) \cup \set{\infty} \]
  is defined as follows. Fix a hyperbolic metric on \s so that $R$ is
  represented by a unique convex subsurface with geodesic boundary and any
  geodesic lamination $\lambda$ is uniquely represented by geodesics in the
  metric so that all intersections between these representatives and the
  subsurface $R$ are essential. Then
  \begin{itemize}
    \item If $\lambda$ does not intersect $R$, define
      $\pi_R(\lambda)=\emptyset$. 
    \item If $\lambda \cap R$ contains a curve or an arc, then define
      $\pi_R(\lambda)$ to be the set of all curves and arcs in $\lambda
      \cap R$ (as before up to homotopy). 
    \item If $\lambda \cap R$ has no compact segment, and there is an
      element $\alpha \in \calC(R)$ disjoint from $\lambda \cap R$, then
      define $\pi_R(\lambda)$ to be the set of all curves and arcs in $\calC(R)$ 
      disjoint from $\lambda$. 
    \item Finally, if $\lambda \cap R$ has no compact segment and every
      element of $\calC(R)$ intersects $\lambda$, then define $\pi_R(\lambda)
      = \infty$.
  \end{itemize}
  \end{definition}
      
  It is easy to see that $\diam_R(\pi_R(\lambda)) \le 2$ when
  $\pi_R(\lambda) \notin \set{\emptyset,\infty}$, so $\pi_R$ is coarsely
  well-defined. Note that the above definition is independent of the choice
  of the hyperbolic metric. Note that a curve or lamination $\lambda$
  intersects a subsurface $R$ essentially if $\pi_R(\lambda) \neq
  \emptyset$.
  
  The notion of subsurface projection generalizes to markings on a surface
  as follows. When $R$ is a non-annular subsurface, $\pi_R(\mu)$ is the
  projection of any curve in the base of $\mu$ that intersects $R$. When
  $R$ is an annulus with core curve $\gamma$ that in not contained in the
  base of $\mu$, $\pi_R(\mu)$ is the projection of any curve in the base
  that intersects $\gamma$. But, when $\gamma$ is contained in the base of
  the marking, we define  $\pi_R(\mu)$ to be the projection of the
  transversal curve $\bar \gamma\in \mu$ to $\gamma$.  
 
 
  Let $p,q$ be any pair of geodesic laminations or markings that intersect
  $R$ essentially.  If $\pi_R(p),\pi_R(q)\subset \calC(R)$, we define the
  subsurface coefficient by:
  \begin{equation}\label{eq:dR}
    d_R (p,q) :=  \min \Big\{ d_R(\alpha,\alpha') :\alpha\in
    \pi_R(p)\;\text{and}\; \alpha'\in\pi_R(q) \Big\}.
  \end{equation}
  If either $p$ or $q$ are mapped to $\infty$ by $\pi_R$, we define $$
  d_R(p,q)=+\infty. $$ The subsurface projection coefficients satisfy the
  triangle inequality with an additive error, that is for any three
  geodesic laminations or markings $p,q,r$ that intersect $R$ we have
  \begin{equation}\label{eq:triineq}
    d_R(p,q)\ladd d_R(p,r) + d_R(r,q)
  \end{equation}
  where the additive error can be taken to be $2$. 
 
  We also define the \emph{intersection number} of any two laminations or
  curves $\lambda$ and $\lambda'$ that intersect $R$ to be 
  \begin{equation}\label{eq:iR}
  \I_R(\lambda,\lambda') = 
   \min \Big\{ \I_R(\alpha, \alpha') : 
      \alpha \in \pi_R(\lambda),  \ \alpha' \in \pi_R(\lambda') \Big\}
   \end{equation}
  in the case when both $\pi_R(\lambda)$ and $\pi_R(\lambda')$ are
  contained in $ \calC(R)$, and $+\infty$ otherwise. Using a surgery
  argument \cite[\S 2]{mm1} we may see that 
  \begin{align}\label{Eqn:di}
    d_R(\alpha,\beta) \lmul \Log \I_R(\alpha,\beta).
  \end{align}
  where $\Log$ is the cut-off logarithmic function defined in (\ref{eqn:Log}).

  \subsection{The standard collar}
   
  Let  $X$ be a hyperbolic metric on $S$ and  $\alpha$ a curve on $X$. Then
  $\alpha$ has a unique geodesic representative and we will denote its
  length by $\ell_X(\alpha)$.
   
  For a  multicurve $\sigma$ let
  \begin{equation}\label{eq:len multicurve} 
  \ell_X(\sigma)=\max_{\alpha\in\sigma}\ell_X(\alpha).
  \end{equation}
  A multicurve $\sigma$ is $\ep$--\textit{short} on $X$ if $\ell_X(\sigma)
  \le \ep$. A subsurface $R$ in $X$ is $\ep$--\textit{thick} if $\ell_X(\bd
  R) \le \ep$ and every curve in $R$ has length at least $\ep$. 
  

  \begin{lemma}[Collar Lemma{\cite[\S 4.1]{buser}}]

    For any hyperbolic metric $X$ on $S$, if $\alpha$ is a  geodesic curve
    with $\ell_X(\alpha) = \ep$, then the regular neighborhood
    $U_X(\gamma)\subseteq X$ of $\alpha$ with width $2w_\ep$ where
     \begin{equation}\label{width collar}
      w_\ep = \sinh^{-1} \left(
    \frac{1}{\sinh(\ep/2)} \right),
    \end{equation}
     is an embedded annulus, and each
    boundary of $U_X(\alpha)$ has length 
    \begin{equation}\label{eq:len bd collar}
    b_\ep = \ep \coth (\ep/2). 
    \end{equation}
     If $\beta$ is a geodesic curve disjoint from $\alpha$, then
    $U_X(\beta)$ and $U_X(\alpha)$ are disjoint.
  \end{lemma}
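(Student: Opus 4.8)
I would work in the universal cover $\HH^2$, writing $X=\HH^2/\Gamma$ with $\Gamma\cong\pi_1(\s)\le\PSL(2,\RR)$ torsion free. The geodesic $\alpha$ lifts to the axis $\tilde\alpha$ of a primitive hyperbolic $A\in\Gamma$ of translation length $\ell_X(\alpha)=\ep$; after conjugation take $\tilde\alpha$ to be the imaginary axis and $A(z)=e^{\ep}z$. If $N_w(\tilde\alpha)$ denotes the closed metric $w$--neighborhood of $\tilde\alpha$ in $\HH^2$, then $U_X(\alpha)=N_{w_\ep}(\tilde\alpha)/\langle A\rangle$. Since $N_{w_\ep}(\tilde\alpha)$ is simply connected and $\langle A\rangle\cong\ZZ$ acts on it freely, properly and cocompactly, the quotient is a closed annulus; thus $U_X(\alpha)$ is an embedded annulus precisely when the natural map $N_{w_\ep}(\tilde\alpha)/\langle A\rangle\to X$ is injective. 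For the boundary length I would use Fermi coordinates $(t,r)$ about $\tilde\alpha$, in which the metric is $dr^2+\cosh^2 r\,dt^2$ and $A$ acts by $t\mapsto t+\ep$; hence the boundary hypercycle $\{r=\pm w_\ep\}$ projects to a circle of length $\ep\cosh w_\ep$, and since $\sinh w_\ep=1/\sinh(\ep/2)$ forces $\cosh w_\ep=\coth(\ep/2)$, this equals $\ep\coth(\ep/2)=b_\ep$.

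\textbf{Reduction to a distance estimate.} Both the injectivity claim and the last assertion of the lemma reduce to: \emph{if $\tilde\gamma_1,\tilde\gamma_2\subset\HH^2$ are disjoint geodesics that are axes of hyperbolic elements of $\Gamma$ with translation lengths $\ell_1,\ell_2$, then $\dH(\tilde\gamma_1,\tilde\gamma_2)\ge w_{\ell_1}+w_{\ell_2}$, where $\sinh w_\ell=1/\sinh(\ell/2)$.} Indeed, for $g\in\Gamma\ssm\langle A\rangle$ the set $g\tilde\alpha$ is a lift of the simple closed geodesic $\alpha$ different from $\tilde\alpha$ (as $\mathrm{Stab}_\Gamma(\tilde\alpha)=\langle A\rangle$), hence disjoint from $\tilde\alpha$ and equal to the axis of $gAg^{-1}$, again of translation length $\ep$; so $\dH(\tilde\alpha,g\tilde\alpha)\ge 2w_\ep$, which says exactly that $N_{w_\ep}(\tilde\alpha)$ is disjoint from each of its $\Gamma$--translates other than itself, i.e.\ the quotient map is injective. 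Likewise, a point of $U_X(\alpha)\cap U_X(\beta)$ produces lifts $\tilde\alpha,\tilde\beta$ with $\dH(\tilde\alpha,\tilde\beta)<w_{\ell_X(\alpha)}+w_{\ell_X(\beta)}$, contradicting the estimate.

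\textbf{Proof of the estimate.} The plan is to invoke the trace identity for the product of two hyperbolics with disjoint axes. Conjugating so that $\tilde\gamma_1$ is the imaginary axis and $\tilde\gamma_2$ is a semicircle at distance $d=\dH(\tilde\gamma_1,\tilde\gamma_2)$ from it, a direct $\SL(2,\RR)$ computation shows that one of the elements $A_1A_2$, $A_1A_2^{-1}$ has half--trace of absolute value $\bigl|\cosh\tfrac{\ell_1}{2}\cosh\tfrac{\ell_2}{2}-\sinh\tfrac{\ell_1}{2}\sinh\tfrac{\ell_2}{2}\cosh d\bigr|$. One checks algebraically (using $\cosh(w_{\ell_1}\pm w_{\ell_2})=(\cosh\tfrac{\ell_1}{2}\cosh\tfrac{\ell_2}{2}\pm1)/(\sinh\tfrac{\ell_1}{2}\sinh\tfrac{\ell_2}{2})$) that this quantity is $<1$ exactly when $|w_{\ell_1}-w_{\ell_2}|<d<w_{\ell_1}+w_{\ell_2}$; and when $\ell_1=\ell_2$ — the only case needed for embeddedness — it is automatically $\le1$ for every $d>0$, so the window is simply $0<d<2w_{\ell}$. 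In that window the chosen element is elliptic, hence of finite order, contradicting torsion-freeness of $\Gamma$. This gives $d\ge w_{\ell_1}+w_{\ell_2}$ in the equal-length case, and in general $d\ge w_{\ell_1}+w_{\ell_2}$ or $d\le|w_{\ell_1}-w_{\ell_2}|$. To kill the residual possibility $d\le|w_{\ell_1}-w_{\ell_2}|$ in the last assertion (say $w_{\ell_X(\alpha)}>w_{\ell_X(\beta)}$), note that a lift of $\beta$ then enters the already-established embedded collar $U_X(\alpha)$, and derive a contradiction from the geometry of the geodesic pair of pants spanned by $\alpha$, $\beta$ and a third boundary curve, where disjointness of the standard collars of the cuffs follows from the right-angled hexagon identities.

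\textbf{Main obstacle.} The crux is the sharp distance estimate: the hyperbolic-trigonometry/trace computation and the verification that the extremal (parabolic, ideal) configuration yields precisely the constants $w_\ep$ and $b_\ep$. The remaining point needing separate care is the unequal-length ``window'' $d\le|w_{\ell_1}-w_{\ell_2}|$ in the disjointness statement, for which the pair-of-pants computation above is the cleanest route.
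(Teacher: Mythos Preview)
The paper does not prove this lemma; it is quoted verbatim from Buser \cite[\S 4.1]{buser} and used as a black box. So there is no ``paper's proof'' to compare against. That said, your argument is one of the standard routes and is essentially correct: the Fermi--coordinate computation of $b_\ep$ is clean, and the equal--length trace argument gives embeddedness of each collar exactly as you say.

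The one genuine soft spot is the one you flag yourself. For two \emph{different} disjoint simple closed geodesics $\alpha,\beta$, the trace inequality only traps $d$ outside the open interval $(|w_{\ell_1}-w_{\ell_2}|,\,w_{\ell_1}+w_{\ell_2})$, and the residual possibility $d\le |w_{\ell_1}-w_{\ell_2}|$ is not a phantom: it must be ruled out. Your proposed fix via pair--of--pants geometry is exactly the right move and is in fact how Buser organizes the whole proof: complete $\alpha\cup\beta$ to a pants decomposition, observe that the half--collars of $\alpha$ and $\beta$ can only meet inside a common pair of pants, and then the right--angled hexagon identities give directly that the perpendicular distance between two cuffs of lengths $\ell_1,\ell_2$ is at least $w_{\ell_1}+w_{\ell_2}$. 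Once you commit to that computation, the trace argument for the unequal--length case becomes redundant --- the pants calculation handles both embeddedness and disjointness uniformly --- so you may as well lead with it rather than treat it as a patch.
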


   \begin{definition}
     We will call $U_X(\gamma)$ the \emph{standard collar} about $\gamma$
     on $X$. A boundary curve of a regular neighborhood of a closed
     geodesic is called a \emph{hypercycle}.
   \end{definition}


  \begin{remark} 
    It is easy to see that when $\ep<1$, $w_\ep$ is $\log\frac{1}{\ep}$ up
    to an additive error of at most $\log{3}$. Also, $b_\ep$ is an
    increasing function with $\lim_{\ep\to 0}b_\ep=2.$ 
  \end{remark}

  \subsection{Short marking} \label{Sec:marking}
  Recall that the \emph{Bers constant} is a constant $\ep_B$ that depends
  only on the complexity of $S$ \cite[\S 5]{buser}, such that any
  hyperbolic metric $X$ on \s admits a pants decomposition $\calP$ where
  $\ell_X(\partial \calP) \le \ep_B$. By increasing $\ep_B$ if necessary, we
  may assume that $\ep_B \ge 1$ has the property that whenever
  $\ell_X(\alpha) \ge 1$, then the shortest transverse curve to $\alpha$ is
  $\ep_B$--short.
 
  A pants decomposition $\calP$ with $\ep_B$-short boundary will be called
  a \textit{short pants decomposition}. A \textit{short marking} $\mu_X$ on
  $X$ is a marking consisting of a short pants decomposition as the base
  and that the transversal curve to each base curve is chosen to be the
  shortest possible on $X$. 
 
  For a subsurface $R\subseteq S$ we write $d_R(X,\param)$ for
  $d_R(\mu_X,\param)$. 

  \subsection{Twisting and length}
  
  \label{subsec:twisting}

  In the case when $R$ is an annulus with core curve $\gamma$, we use the
  notation $d_\gamma(X,\param)$ instead of $d_R(\mu_X,\param)$.  For a
  lamination $\lambda$, we will call $d_\gamma(X,\lambda)$ the
  \emph{relative twisting} of $\lambda$ about $\gamma$ on $X$.
  
  The following lemma provides us with some useful estimates for the length
  of an arc inside the collar neighborhood of a curve $\gamma$ and the
  twisting of the arc about $\gamma$ on $X$.
  
  \begin{lemma} \label{Lem:ConstantA}
    
    There is a universal constant $A>0$ such that the following statements
    hold. Let $X$ be a hyperbolic metric on \s admitting a curve $\gamma$
    of $\ell_X(\gamma) =1$. Then for any simple geodesic $\alpha$
    intersecting $\gamma$, the length $L(\alpha)$ of the longest arc in
    $\alpha \cap U_X(\gamma)$ has $$d_\gamma(X,\alpha) -A \le L(\alpha) \le
    d_\gamma(X,\alpha) + A.$$
  
    \end{lemma}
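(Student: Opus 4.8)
The plan is to read both sides of the inequality as coarse measurements of how much $\alpha$ twists around $\gamma$ inside the standard collar $U_X(\gamma)$, and then to compare the two measurements. Because $\ell_X(\gamma)=1$, this collar is a fixed hyperbolic cylinder: in Fermi coordinates $(r,t)$ — with $r$ the signed distance to $\gamma$, $|r|\le w_1$, and $t$ an arclength parameter on $\gamma$, hence defined modulo $1$ — the metric is $dr^2+\cosh^2(r)\,dt^2$, where $w_1=\sinh^{-1}(1/\sinh(1/2))$; in particular $\cosh w_1=\coth(1/2)=(e+1)/(e-1)$. For a component $\eta$ of $\alpha\cap U_X(\gamma)$ write $\Delta t(\eta)$ for the signed change of $t$ along $\eta$; up to bounded error this records how much $\eta$ wraps around $\gamma$, and the proof reduces to matching both $L(\alpha)$ and $d_\gamma(X,\alpha)$ with $|\Delta t(\eta)|$ — provided every such $\eta$ actually crosses $\gamma$.

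Establishing that proviso is the step I expect to be the main obstacle: a component of $\alpha\cap U_X(\gamma)$ that spiralled deep into the collar \emph{without} crossing $\gamma$ could contribute a long arc to $L(\alpha)$ while remaining invisible to $\pi_\gamma(\alpha)$. To exclude it, suppose $\eta$ is such a component and lift it to a subarc of a complete geodesic $\widetilde\eta\subset\HH$; since $\eta$ is disjoint from $\gamma$, so is $\widetilde\eta$ from the axis $\widetilde\gamma$ of the hyperbolic element $g\in\pi_1(X)$ representing $\gamma$ (the point of $\widetilde\eta$ closest to $\widetilde\gamma$ lies on the lift of $\eta$). In the upper half-plane model with $\widetilde\gamma=i\RR$ and $g(z)=ez$ (translation length $\ell_X(\gamma)=1$), $\widetilde\eta$ is a half-circle with feet $0<p<q$, and if $r_0$ is its distance to $i\RR$ a direct computation gives $q/p=(\cosh r_0+1)/(\cosh r_0-1)$. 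As $\alpha$ is simple, $\widetilde\eta$ and $g\widetilde\eta$ are disjoint geodesics, which (since $e>1$) forces $q/p\le e$, i.e. $\cosh r_0\ge(e+1)/(e-1)=\cosh w_1$; combined with $r_0\le w_1$ this gives $r_0=w_1$, so $\eta$ is a point. What makes this work is exactly the calibration $\cosh w_1=\coth(1/2)$ built into the Collar Lemma. Hence every component of $\alpha\cap U_X(\gamma)$ crosses $\gamma$ once; since $\alpha$ is simple these components are pairwise disjoint, so any two of the numbers $\Delta t(\eta)$ agree up to a universal additive constant. Set $u:=|\Delta t(\eta)|$ for a longest component $\eta$; then $L(\alpha)$ is the length of $\eta$.

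Next I would compute that length in terms of $u$. The arc $\eta$ meets $\gamma$ in a single point $O$; reflecting in the perpendicular to $\gamma$ through $O$ shows that $O$ is the midpoint of $\eta$ and that the feet $P',Q'$ of the perpendiculars to $\gamma$ from the endpoints $P,Q$ of $\eta$ each lie at distance $u/2$ from $O$. The hyperbolic right triangle $PP'O$, with legs $|PP'|=w_1$ and $|P'O|=u/2$, then gives $\cosh|PO|=\cosh w_1\cosh(u/2)$, so that
\[
 L(\alpha)\;=\;2\,|PO|\;=\;2\operatorname{arccosh}\!\big(\cosh w_1\,\cosh(u/2)\big).
\]
Since $\cosh w_1\ge 1$ this is at least $u$, while $\operatorname{arccosh}(y)\le\ln(2y)$ together with $\cosh(u/2)\le e^{u/2}$ give $L(\alpha)\le u+2\ln(2\cosh w_1)$. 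Thus $u\le L(\alpha)\le u+A_0$ for the universal constant $A_0:=2\ln(2\cosh w_1)$.

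Finally I would match $u$ with $d_\gamma(X,\alpha)=d_\gamma(\mu_X,\alpha)$. Let $\delta\in\mu_X$ be the curve through which $\pi_\gamma(\mu_X)$ is taken — the transversal of $\gamma$ if $\gamma$ lies in the pants decomposition of $\mu_X$, and otherwise a base curve meeting $\gamma$; in either case $\ell_X(\delta)\le\ep_B$, using in the first case the arranged property of $\ep_B$ that the shortest transversal of a curve of length $\ge 1$ is $\ep_B$-short (which applies since $\ell_X(\gamma)=1$). A crossing arc of $\delta$ in $U_X(\gamma)$ then has length at most $\ep_B$, so, the collar metric dominating $|dt|$, its twisting satisfies $|\Delta t(\delta)|\le\ep_B$. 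Passing to the compactified annular cover of $\gamma$, the arcs of $\pi_\gamma(\alpha)$ and of $\pi_\gamma(\delta)$ form parallel families, and the standard dictionary between annular subsurface projection and twisting gives $d_\gamma(X,\alpha)\eadd d_\gamma(\delta,\alpha)\eadd|\Delta t(\eta)-\Delta t(\delta)|$, whence $|d_\gamma(X,\alpha)-u|\le A_1$ for a universal $A_1$ depending only on $\ep_B$. Combining with the previous step gives $|L(\alpha)-d_\gamma(X,\alpha)|\le A_0+A_1=:A$, as claimed.
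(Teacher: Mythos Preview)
Your proof is correct and considerably more explicit than the paper's. The paper disposes of the lemma in three lines by quoting \cite[Lemma~3.1]{LRT2}, which gives directly
\[
\big|L(\alpha)-\I(\tau_\gamma,\alpha_\gamma)\,\ell_X(\gamma)\big|\ladd w_1
\]
for $\tau_\gamma$ a perpendicular arc across the collar and $\alpha_\gamma$ a component of $\alpha\cap U_X(\gamma)$; since $\ell_X(\gamma)=1$ and $\I(\tau_\gamma,\alpha_\gamma)\eadd d_\gamma(\alpha,X)$, the result follows. You instead unpack everything in Fermi coordinates, which buys self-containment at the cost of length. Your Step~2 --- that no component of $\alpha\cap U_X(\gamma)$ can lie entirely on one side of $\gamma$, proved from simplicity of $\alpha$ together with the exact calibration $\cosh w_1=\coth(1/2)$ of the collar width --- is a genuine addition: it rules out the scenario in which a long one-sided arc inflates $L(\alpha)$ without contributing to $\pi_\gamma(\alpha)$, a point the paper leaves to the cited lemma.

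One small correction in Step~3: reflection in the geodesic perpendicular to $\gamma$ through $O$ does \emph{not} preserve $\eta$ (it would only if $\eta$ coincided with $\gamma$). The symmetry you want is the rotation by $\pi$ about $O$, i.e.\ $(r,t)\mapsto(-r,\,2t_0-t)$ in your Fermi coordinates; this fixes $O$, swaps the two boundary hypercycles, and sends every geodesic through $O$ to itself, hence exchanges the endpoints $P,Q$ of $\eta$ and shows that $O$ is the midpoint. With that fix your length formula and the remainder of the argument go through unchanged.
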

   
  \begin{proof}

    It follows from \cite[Lemma 3.1]{LRT2} that
    \[\Big|L(\alpha)-\I(\tau_\gamma,\alpha_\gamma)\ell_X(\gamma)\Big|\ladd
    w_1\] where $\tau_\gamma\subset U_X(\gamma)$ is a geodesic arc
    crossing $U_X(\gamma)$ and perpendicular to $\gamma$, $\alpha_\gamma$
    is a component  of $\alpha\cap U_X(\gamma)$ and $w_1$ is half of the
    width of the standard collar of $\gamma$. Since
    $\I(\alpha_\gamma,\tau_\gamma)\eadd d_\gamma(\alpha,X)$,
    $\ell_X(\gamma)=1$ and $w_1$ is a fixed number, the lemma follows.
    \qedhere 
  
  \end{proof}

  \subsection{Thurston metric} 

  \label{Sec:ThurstonMetric} 
     
  In this subsection we recall the definition of a metric on \Teich space
  introduced by Thurston \cite{thurston-stretch}.

  Recall that the \Teich space of \s, denoted by $\T(\s)$, is the space of
  hyperbolic metrics on \s modulo the action of homeomorphisms of \s that
  are isotopic to the identity. The space $\T(\s)$ is homeomorphic to an
  open ball of dimension $2 \xi(\s)$ and has been the subject of study in
  particular in conjunction with the mapping class groups of surfaces.

  Given $X,Y \in \T(S)$, let $L(X,Y)=\inf_f L_f$, where $f \from X \to Y$
  is a Lipschitz homeomorphism from $X$ to $Y$ isotopic to the identity on
  \s, and $L_f$ is the Lipschitz constant of $f$. A map $f \from X \to Y$
  is called {\em optimal } if $L_f = L(X,Y)$. Thusrston \cite[Theorem
  8.5]{thurston-stretch} showed that such a map exists and defined
  \emph{Thurston metric} on $\T(\s)$ by
  \begin{equation}\label{Th}
    \dth(X,Y) := \log L(X,Y).
  \end{equation}

  Thurston also introduced a geodesic lamination $\lambda(X,Y)$ called
  {\em maximal stretch lamination}  where for any segment
  $\sigma\subseteq \lambda(X,Y)$ with finite length we have that
  $\len(f(\sigma))=L\cdot\len(\sigma)$. In the following theorem we
  collect some of the important properties of Thurston metric.

  \begin{theorem}[Thurston metric]\label{Thm:Thurston}
    \cite[Theorem 8.2, Theorem 8.5]{thurston-stretch}
    
    The Thurston metric $d(\param,\param)$ is an asymmetric complete
    geodesic metric on $\T(\s)$. Moreover, for all $X, Y \in \T(\s)$,  

    \begin{enumerate}
      \item $L(X,Y) =  \sup_{\alpha\in \calC(S)}
        \frac{\ell_Y(\alpha)}{\ell_X(\alpha)}$. \label{th:sup}
      \item There exists an optimal map from $X$ to $Y$ and every optimal
        map preserves the leaves of $\lambda(X,Y)$, stretching the length
        of each leaf by a factor of $e^{\dth(X,Y)}$. \label{th:exp}
      \item For any Thurston geodesic $\g \colon I \to \T(S)$, $I \subseteq
        \RR$, there is a geodesic lamination $\lambda_\g$, maximal with
        respect to inclusion, such that for all $t > s$, $\lambda_\g
        \subseteq \lambda(\g(s),\g(t))$. Moreover, if $I=[a,b]$ is a finite
        interval, then $\lambda_\g = \lambda(\g(a),\g(b))$. 
    \end{enumerate}
  \end{theorem}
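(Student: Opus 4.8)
This is Thurston's theorem \cite{thurston-stretch}, so the plan is to reconstruct his argument; it splits into three stages matching the three numbered assertions. \emph{Stage 1 (the metric axioms and the inequality $L(X,Y)\ge\sup_\alpha\ell_Y(\alpha)/\ell_X(\alpha)$).} Fix $X,Y$ and a Lipschitz homeomorphism $f\from X\to Y$ isotopic to the identity with Lipschitz constant $L_f$. For $\alpha\in\calC(\s)$, its $X$--geodesic representative $\gamma$ has image $f(\gamma)$ a loop freely homotopic to $\alpha$ in $Y$ of length at most $L_f\,\ell_X(\alpha)$, and since the geodesic minimizes length in its homotopy class, $\ell_Y(\alpha)\le L_f\,\ell_X(\alpha)$. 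Taking the supremum over $\alpha$ and then the infimum over $f$ gives the stated lower bound for $L(X,Y)$, hence $\dth(X,Y)\ge 0$, with equality forcing $X=Y$ (if every curve is at least as long in $X$ as in $Y$ then the two hyperbolic structures coincide). Composition of Lipschitz maps gives $L(X,Z)\le L(X,Y)L(Y,Z)$, i.e. the triangle inequality for $\dth$; asymmetry is visible already here because swapping $X$ and $Y$ inverts the ratios.

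\emph{Stage 2 (realizing the supremum; proof of (1) and (2)).} For the reverse inequality I would produce an optimal map. By continuity of the length functionals on $\ML(\s)$ and density of weighted simple closed curves, $\sup_{\alpha\in\calC(\s)}\ell_Y(\alpha)/\ell_X(\alpha)=\sup_{\mu\in\ML(\s)}\ell_Y(\mu)/\ell_X(\mu)$; this ratio descends to the compact space $\PML(\s)$, so the supremum is attained, and I take $\lambda(X,Y)$ to be the union of all leaves realizing it (one checks this is a geodesic lamination). Completing $\lambda(X,Y)$ to a maximal lamination $\Lambda$ with ideal-triangle complement, I invoke Thurston's stretch construction: the horocyclic foliation transverse to $\Lambda$ together with the canonical affine model on each complementary ideal triangle produces, for every $t\ge 0$, a point $\str(X,\Lambda,t)\in\T(\s)$ and a homeomorphism $X\to\str(X,\Lambda,t)$ that multiplies arc length along $\Lambda$ by exactly $e^{t}$ and is $e^{t}$--Lipschitz everywhere (the Lipschitz bound is verified leaf-by-leaf along $\Lambda$ and explicitly inside each ideal triangle). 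The technical heart is to show $Y$ is reached from $X$ by such a stretch, or more generally by a path stretching along a sublamination of $\Lambda$, assembled so that the total Lipschitz constant equals $\sup_\alpha\ell_Y(\alpha)/\ell_X(\alpha)$; this yields $L(X,Y)\le\sup_\alpha\ell_Y(\alpha)/\ell_X(\alpha)$ and completes (1). Since the resulting optimal map stretches $\lambda(X,Y)$ by the factor $e^{\dth(X,Y)}$, and since any optimal map must stretch $\lambda(X,Y)$ by the full factor — a leaf stretched by strictly less could be exploited by a local modification of the map near that leaf to lower the Lipschitz constant, contradicting optimality — assertion (2) follows.

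\emph{Stage 3 (geodesics and the lamination $\lambda_\g$).} Metric completeness I would derive from the surjectivity established in Stage 2 together with properness of length functions on $\T(\s)$; that $\dth$ is geodesic follows because stretch paths are geodesics: $\str(\str(X,\Lambda,t),\Lambda,s)=\str(X,\Lambda,s+t)$, so $\dth$ is additive along them, and by Stage 2 any two points are joined by such a path. For the final clause, let $\g\from I\to\T(\s)$ be a Thurston geodesic and $a\le s<t\le b$ with $[a,b]\subseteq I$. Additivity of $\dth$ along $\g$ gives $L(\g(a),\g(b))=L(\g(a),\g(s))\,L(\g(s),\g(t))\,L(\g(t),\g(b))$; if a leaf is stretched by the full factor $L(\g(a),\g(b))$ over $[a,b]$, then since each partial stretch is at most the corresponding $L$ and the product of the $L$'s equals the total, every partial stretch is maximal, so in particular the leaf lies in $\lambda(\g(s),\g(t))$. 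Hence $\lambda(\g(a),\g(b))\subseteq\lambda(\g(s),\g(t))$ whenever $[s,t]\subseteq[a,b]$, so for a finite interval one may take $\lambda_\g:=\lambda(\g(a),\g(b))$, and for general $I$ one takes the (stabilizing) intersection over compact subintervals, maximality meaning $\lambda_\g$ contains every lamination that is maximally stretched along all of $\g$.

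The main obstacle is Stage 2: showing the supremum of length ratios is actually attained by an explicit Lipschitz homeomorphism. This is where the full geometry of the horocyclic foliation, the ideal-triangle stretch model, and the argument that every target is reachable by (concatenations of) stretches is required; everything else — the easy inequality, the metric axioms, and the behavior of the maximally stretched lamination along a geodesic — is then formal bookkeeping on top of it.
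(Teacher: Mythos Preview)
The paper does not prove this theorem; it is stated as a citation of Thurston's results \cite[Theorem 8.2, Theorem 8.5]{thurston-stretch} and used as background. So there is no proof in the paper to compare your proposal against.

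That said, your sketch follows the broad outline of Thurston's argument, but a few points are imprecise in ways that matter. In Stage 2 you say $Y$ is reached from $X$ by a stretch along a single maximal lamination $\Lambda$; Thurston actually shows that $X$ and $Y$ are joined by a \emph{concatenation} of stretch paths along possibly different maximal laminations, each containing $\lambda(X,Y)$, and this is genuinely harder than the single-stretch case. Also, your definition of $\lambda(X,Y)$ as ``the union of all leaves realizing the supremum'' is not quite Thurston's: he characterizes it as the maximal chain-recurrent lamination maximally stretched by every optimal map, and showing this is a well-defined lamination (rather than a messy union) takes work. Finally, the positivity claim in Stage 1 --- that $\ell_X(\alpha)\ge\ell_Y(\alpha)$ for all $\alpha$ forces $X=Y$ --- is itself a nontrivial theorem (essentially that the marked length spectrum determines the point in $\T(\s)$), not something to be asserted in passing. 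None of this is fatal to the overall strategy, but each is a real gap if you are aiming for a self-contained proof rather than a summary.
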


  \subsection{Shadow map}

  Let $R \subseteq \s$ be a subsurface and let $\Upsilon_R \colon \T(\s)
  \to \calC(R)$ be the coarse map defined by $\Upsilon_R(X) = \pi_R(\mu_X)$,
  where $\mu_X$ is a short marking on $X$. The final result
  of this section is that $\Upsilon_R$ is a Lipschitz map.

  \begin{lemma} \label{Lem:Lip} 
    
    For any subsurface $R \subseteq \s$, the coarse map $\Upsilon_R$ is
    $K$--Lipschitz for some $K\geq 1$ that depends only on \s.   

  \end{lemma}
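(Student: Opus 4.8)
The statement to prove is that $\Upsilon_R \colon \T(\s) \to \calC(R)$, $X \mapsto \pi_R(\mu_X)$, is a coarse $K$-Lipschitz map for a constant $K$ depending only on $\s$. There are two things to check: that $\Upsilon_R$ is coarsely well-defined (the image of a single point has uniformly bounded diameter), and that $d_R\big(\Upsilon_R(X), \Upsilon_R(Y)\big) \lmul \dth(X,Y) + C$ for all $X, Y \in \T(\s)$.

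The plan is to reduce the Lipschitz estimate to the case where $\dth(X,Y)$ is bounded by a fixed constant, say $\dth(X,Y) \le 1$, by subdividing a Thurston geodesic from $X$ to $Y$ into $\lceil \dth(X,Y)\rceil$ subintervals: if the bounded-distance case gives a uniform bound $C_0$ on $d_R$, then the triangle inequality for subsurface coefficients~\eqref{eq:triineq} (additive error $2$) gives $d_R(\Upsilon_R(X),\Upsilon_R(Y)) \lmul (C_0 + 2)\,\dth(X,Y) + (C_0+2)$. So it suffices to produce, for $X, Y$ with $\dth(X,Y)\le 1$, a uniform bound on $d_R(\mu_X, \mu_Y)$. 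For this I would first observe that the short marking $\mu_X$ has all its curves of length $\le \ep_B$ on $X$, and by Theorem~\ref{Thm:Thurston}\eqref{th:sup} every such curve has length $\le e^{\dth(X,Y)}\ep_B \le e\,\ep_B$ on $Y$. Thus each curve $\alpha$ of $\mu_X$ is, on $Y$, a curve of bounded length; a curve of bounded hyperbolic length on $Y$ intersects each curve of the short marking $\mu_Y$ a bounded number of times (two geodesics of length $\le e\ep_B$ on a hyperbolic surface intersect a bounded number of times, by an area/collar argument), so $\I(\alpha,\mu_Y) \lmul 1$.

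Now take a curve $\alpha \in \mu_X$ that essentially intersects $R$ — such exists because $\mu_X$ is a filling marking (its curves fill $\s$, as the base is a pants decomposition), so at least one of its curves meets $R$. Then $\pi_R(\mu_X)$ is computed from such an $\alpha$. Similarly pick $\beta \in \mu_Y$ meeting $R$. Since $\I(\alpha,\beta)\lmul 1$ and projection to $R$ does not increase intersection number, $\I_R(\alpha,\beta) \lmul 1$, so by~\eqref{Eqn:di}, $d_R(\alpha,\beta)\lmul \Log(\I_R(\alpha,\beta)) \lmul 1$. Combining with the coarse well-definedness of $\pi_R$ ($\diam_R \pi_R \le 2$), this yields $d_R(\mu_X,\mu_Y) \lmul 1$, which is the desired bounded-distance estimate; it simultaneously handles the case $X=Y$, giving $\diam_R(\Upsilon_R(X)) \lmul 1$. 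One technical point needs care here: the definitions of $\pi_R$ on a marking given in the text split into cases (core of an annulus in the base, etc.), but the lemma concerns non-annular $R$ or is used that way, and in any case the marking always contains a curve meeting $R$ essentially whose projection controls $\pi_R(\mu)$ up to additive error.

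The main obstacle is the step asserting that a curve of length $\le e\ep_B$ on $Y$ meets each curve of the short marking $\mu_Y$ a bounded number of times, i.e.\ the quantitative statement ``bounded length $\Rightarrow$ bounded intersection with a short marking.'' The subtlety is that when a curve $\gamma$ of $\mu_Y$ is very short, a bounded-length curve $\alpha$ could in principle wind many times inside the thin collar $U_Y(\gamma)$; but each crossing of the collar costs at least $2w_{\ell_Y(\gamma)} \approx 2\log(1/\ell_Y(\gamma))$ in length while contributing a definite amount of twisting, and the transverse curve $\bar\gamma$ of the short marking is itself the shortest transversal — Lemma~\ref{Lem:ConstantA} (or rather its underlying estimate) bounds $d_\gamma(\alpha,Y)$ in terms of the length of arcs of $\alpha$ in the collar. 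The clean way around this is to bound $\I(\alpha,\mu_Y)$ by $\I_R$-free reasoning: a geodesic of length $\le e\ep_B$ can enter each standard collar only boundedly often and outside the collars the thick-part geometry gives a bounded intersection count; combined with the twisting bound, $\I(\alpha,\mu_Y)\lmul 1$. Once that is in hand the rest is bookkeeping with~\eqref{eq:triineq} and~\eqref{Eqn:di}.
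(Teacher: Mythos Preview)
Your proposal is correct and follows essentially the same approach as the paper's proof: reduce to bounded Thurston distance, use the exponential growth bound from Theorem~\ref{Thm:Thurston}\eqref{th:sup} to control lengths of short-marking curves of $X$ on $Y$, and conclude bounded intersection (hence bounded $d_R$) via~\eqref{Eqn:di}. The paper's version is a two-line sketch that jumps directly from $\ell_Y(\alpha)\le e\ep_B$ to $\I_R(\alpha,\beta)=O(1)$ without comment, whereas you spell out the subdivision argument, the collar-twisting subtlety, and the annular-core case --- all details the paper suppresses.
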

  
  \begin{proof} 
    
    Suppose $\dth(X,Y) = 1$ and $\alpha$ and $\beta$ are $\ep_B$--short
    curves on $X$ and $Y$ respectively both intersecting $R$. Since the
    length of a curve grows at most exponentially in the distance from $X$
    to $Y$, we have $\ell_Y(\alpha) \le e \ep_B$. Therefore,
    $\I_R(\alpha,\beta) = O(1)$, which implies $d_R(X,Y) = O(1)$. \qedhere 
  
  \end{proof}
 
\section{Tools from hyperbolic geometry}  

  In this section we gather some useful properties of the hyperbolic plane
  and hyperbolic surfaces. Some of the results are elementary and are known
  in spirit, but to our knowledge the statements do not directly follow
  from what is known in literature.  
  
  \begin{lemma} \label{Lem:closer} 
    For any $0 < \ep_1 < \ep_2$, set $L = \log 2
    \left(\frac{\sinh{\ep_2}}{\sinh{\ep_1}} \right).$ Let $\lambda$ and
    $\alpha$ be two geodesics in $\HH$ and let $p_0, p_1, p_2$ be 3 points
    on $\lambda$ appearing in that order. Assume that,
    \[
    d_\HH(p_0, p_1) \geq L, \qquad 
    d_\HH(p_1, p_2) \geq L, \qquad 
    d_\HH(p_0, \alpha) \leq \ep_2, \qquad 
    d_\HH(p_2, \alpha) \leq \ep_2. \qquad 
    \]
    Then 
    \[
    d_\HH(p_1, \alpha) \leq \ep_1. 
    \]
  \end{lemma}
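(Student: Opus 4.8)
The plan is to work in the hyperbolic plane with two geodesics $\lambda$ and $\alpha$, and to exploit the fact that the distance function $t \mapsto d_\HH(\lambda(t), \alpha)$ along a geodesic $\lambda$ is a convex function of arclength (this is the standard convexity of distance to a convex set in $\HH$). The three points $p_0, p_1, p_2$ lie on $\lambda$ in order, with $p_1$ between $p_0$ and $p_2$ at arclength distance at least $L$ from each. So I want to show: if the values of this convex function at the endpoints $p_0$ and $p_2$ are at most $\ep_2$, and the endpoints are far enough apart (at least $2L$ apart, measured through $p_1$), then the value at the intermediate point $p_1$ drops below $\ep_1$.

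First I would recall (or cite) the explicit formula for the distance from a point on a geodesic to a disjoint geodesic. The cleanest way is the following: if $\lambda$ and $\alpha$ are disjoint geodesics in $\HH$ with common perpendicular of length $m = d_\HH(\lambda,\alpha)$, and $q$ is the foot of that perpendicular on $\lambda$, then for a point $p \in \lambda$ at signed arclength $s$ from $q$ one has $\sinh d_\HH(p,\alpha) = \cosh s \cdot \sinh m$ (this is a standard Fermi-coordinate / hyperbolic trig identity; it also handles the cases where $\lambda, \alpha$ intersect or are asymptotic as limiting cases, but the disjoint case is the one that matters for the estimate). In particular $\sinh d_\HH(p,\alpha) \geq \sinh m \geq \sinh d_\HH(p', \alpha) / \cosh(s')$ type comparisons are available.

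Next I would set up the comparison. Write $s_1$ for the signed arclength from the foot $q$ to $p_1$ and $s_0, s_2$ for those to $p_0, p_2$; since $p_1$ is between $p_0$ and $p_2$ and is at distance $\geq L$ from each, one of $|s_0 - s_1|, |s_2 - s_1|$ is $\geq L$, say WLOG $|s_0 - s_1| \geq L$, hence (since $q$ lies along $\lambda$ somewhere) either $|s_0| \geq |s_1| + L$ — no, more carefully: $q$ may lie outside $[p_0,p_2]$, but in any case among $s_0, s_2$ at least one, say $s_0$, satisfies $|s_0| \geq |s_1| + L$ after possibly relabeling, using that $p_1 \in [p_0,p_2]$ and the triangle inequality on $\RR$. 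Then
\[
\sinh \ep_2 \;\geq\; \sinh d_\HH(p_0,\alpha) \;=\; \cosh(s_0)\,\sinh m \;\geq\; \cosh(|s_1| + L)\,\sinh m \;\geq\; \frac{e^{L}}{2}\,\cosh(s_1)\,\sinh m \;=\; \frac{e^L}{2}\,\sinh d_\HH(p_1,\alpha),
\]
using $\cosh(a+L) \geq \tfrac12 e^L \cosh a$ for $L \geq 0$. Since $e^L = 2\sinh\ep_2/\sinh\ep_1$ by the definition of $L$, the right-hand side is $(\sinh\ep_2/\sinh\ep_1)\sinh d_\HH(p_1,\alpha)$, so $\sinh d_\HH(p_1,\alpha) \leq \sinh\ep_1$, i.e. $d_\HH(p_1,\alpha)\leq \ep_1$, as desired.

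The main obstacle I anticipate is the bookkeeping around the position of the common-perpendicular foot $q$ relative to the triple $p_0, p_1, p_2$: one must argue cleanly that the "far" endpoint can be taken to have $|s| \geq |s_1| + L$, and one must also dispatch the degenerate configurations (when $\lambda$ and $\alpha$ intersect or are asymptotic, so there is no honest common perpendicular of positive length) — though in those cases $d_\HH(p_i,\alpha)$ is small for a different reason and a direct argument or a limiting argument works. I would also double-check the constant: the factor $\tfrac12$ in $\cosh(a+L)\geq \tfrac12 e^L\cosh a$ is exactly what forces the "$2$" inside the logarithm defining $L$, so the stated constant is sharp for this argument and no slack is wasted. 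Everything else is elementary hyperbolic trigonometry, so modulo the case analysis the proof is short.
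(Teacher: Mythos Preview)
Your argument is correct and genuinely different from the paper's. You use the Fermi--coordinate identity $\sinh d_\HH(p,\alpha)=\cosh(s)\sinh m$ directly on $\alpha$, together with the elementary fact that, since $p_1$ lies between $p_0$ and $p_2$ at distance $\ge L$ from each, one of $|s_0|,|s_2|$ must be at least $|s_1|+L$ (this follows by splitting on the sign of $s_1$); then $\cosh(|s_1|+L)\ge\cosh(|s_1|)\cosh L\ge\tfrac{e^L}{2}\cosh s_1$ finishes. The paper instead avoids any reference to the common perpendicular of $\lambda$ and $\alpha$: it builds a Saccheri quadrilateral $a\,p_0\,p_2\,b$ with legs of length exactly $\ep_2$ perpendicular to $[a,b]$, shows $p_1$ is within $\ep_1$ of $[a,b]$ (and of its reflection $[a',b']$), and then traps $\alpha$ inside the resulting hexagon. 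Your approach is shorter and more computational, but obliges you to treat the intersecting and asymptotic configurations separately (as you note; the intersecting case goes through verbatim with $\sinh|s|$ in place of $\cosh s$, and the asymptotic case by a limit). The paper's construction is a bit longer but handles all relative positions of $\lambda$ and $\alpha$ uniformly, since it only uses the two distances $d_\HH(p_0,\alpha),d_\HH(p_2,\alpha)$ and never the common perpendicular.
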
  
  
  \begin{proof}
    Let $a$ and $b$ be points in $\HH$ on the same side of $\lambda$ and
    such that  segments $[a,p_0]$ and $[b,p_2]$ are both perpendicular to
    $[a,b]$ and $d_\HH(p_0,a)=d_\HH(p_2,b)=\epsilon_2$. Let $c\in\lambda$
    be the closest point to $[a,b]$, clearly it is also the midpoint of
    $[p_0,p_2]$. We have $d_\HH(p_0,c)= \frac 1 2 d_\HH(p_0,p_2)\geq L$ and
    hence $$\sinh d_\HH(c,[a,b])=\frac{\sinh d_\HH(a,p_0)}{\cosh
    d_\HH(p_0,c)}< \frac{\sinh \ep_2}{\frac 1 2 e^L}=\sinh \ep_1.$$ This
    implies that some point $p'\in [p_0,c]$ is distance $\ep_1$ from
    $[a,b]$. We have $$\frac {\sinh \ep_2}{\sinh \ep_1}=\frac{\cosh
    d_\HH(p_0,c)}{\cosh d_\HH(p',c)}\geq \frac{1}{2}
    e^{d_\HH(p_0,c)-d_\HH(p',c)}$$ and so $$d_\HH(p',c)\geq
    d_\HH(p_0,c)-L.$$ Similarly, let $p''\in[c,p_2]$  which is distance
    $\ep_1$ from $[a,b]$. By the same computation $$d_\HH(p'',c)\geq
    d_\HH(p_2,c)-L.$$ This means in particular that $d_\HH(p_1,[a,b])\leq
    \epsilon_1$. 

    Let $[a',b']$ be the image of $[a,b]$ under the reflection in
    $\lambda$. Recall that $d_\HH(p_0,\alpha)\leq \ep_2$ and
    $d_\HH(p_2,\alpha)\leq \ep_2$. Hence $\alpha$ crosses the polygon
    $abp_2b'a'p_0$ at $[p_0,a]$ or $[p_0,a']$ and $[p_2,b]$ or $[p_2,b']$.
    We conclude that  $$d_\HH(p_1,\alpha)\leq
    d_\HH(p_1,[a,b])=d_\HH(p_1,[a',b'])\leq \epsilon_1. \qedhere$$
  \end{proof}
  
  \begin{lemma}\label{Lem:Intersect}
  
    Let $\alpha$ and $\lambda$ be disjoint geodesics in $\HH$. Let $a,a'\in
    \alpha$ be the endpoints  of the orthogonal projection of $\lambda$ to
    $\alpha$. Then for any hyperbolic isometry $\phi$ with axis $\alpha$
    and translation length $\ell < d_\HH(a,a')$, the geodesics
    $\phi(\lambda)$ and $\lambda$ intersect.   \end{lemma}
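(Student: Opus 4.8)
The plan is to work in the hyperbolic plane and reduce the claim to a statement about where the axis $\alpha$ of $\phi$ separates $\lambda$ from its translate. Let $a, a' \in \alpha$ be the feet of the common perpendiculars realizing $d_\HH(p,\alpha)$ as $p$ ranges over $\lambda$; since $\alpha$ and $\lambda$ are disjoint, the orthogonal projection of $\lambda$ onto $\alpha$ is a segment $[a,a']$ (its endpoints are the projections of the two ideal endpoints of $\lambda$), and $d_\HH(a,a') > 0$ precisely because $\alpha$ and $\lambda$ do not share an endpoint at infinity. The key geometric fact is that $\lambda$ lies entirely on one side of the complete geodesic $\nu$ through $a$ perpendicular to $\alpha$, and entirely on one side of the geodesic $\nu'$ through $a'$ perpendicular to $\alpha$; in fact $\lambda$ is contained in the region bounded by $\nu$ and $\nu'$ that does not meet the open segment of $\alpha$ beyond $a$ or beyond $a'$. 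This is because the nearest-point projection to $\alpha$ is distance-nonincreasing and the projection of $\lambda$ is exactly $[a,a']$.

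Next I would use the hypothesis $\ell < d_\HH(a,a')$ on the translation length. The isometry $\phi$ translates along $\alpha$ by $\ell$, so $\phi(a) \in [a,a']$ (or at worst $\phi$ moves $a$ to a point of $\alpha$ strictly between $a$ and $a'$, on the $a'$-side, at distance $\ell < d_\HH(a,a')$). Consequently $\phi(\nu)$ is the perpendicular to $\alpha$ at $\phi(a)$, which lies strictly between $\nu$ and $\nu'$. Now $\phi(\lambda)$ lies on one side of $\phi(\nu)$ — the same side, relative to $\alpha$-direction, that $\lambda$ lies relative to $\nu$ — namely the side containing $a'$. But $\lambda$ also crosses into that halfplane: the foot $a'$ of $\lambda$ is on $\phi(\nu)$ or beyond it toward $a'$, so $\lambda$ has points in the halfplane bounded by $\phi(\nu)$ on the $a'$-side. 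Symmetrically, applying $\phi^{-1}$ (or just running the same argument from the $a'$ end, using that $\phi^{-1}(a')$ lies in $[a,a']$), one sees $\phi(\lambda)$ also reaches past the perpendicular at $a$ toward $a$. So $\phi(\lambda)$ and $\lambda$ each have points strictly on both sides of, say, $\phi(\nu)$ within the strip, forcing their relative position to force a crossing.

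To make the crossing rigorous I would phase the endpoint bookkeeping in terms of the circular order on $\bd\HH$. Orient $\alpha$ so $\phi$ translates in the positive direction; let $\lambda$ have ideal endpoints $\xi^-, \xi^+ \in \bd\HH$ projecting to $a, a'$ respectively, both on the same side of $\alpha$. Then $\phi(\lambda)$ has endpoints $\phi(\xi^-), \phi(\xi^+)$, and because $\phi$ pushes everything on $\bd\HH$ toward the attracting fixed point of $\phi$, the condition $\ell < d_\HH(a,a')$ translates into the statement that $\phi(\xi^-)$ projects to a point of $\alpha$ strictly before $a'$, i.e. $\phi(\xi^-)$ lies in the boundary arc strictly between $\xi^-$ and $\xi^+$ on the $\lambda$-side. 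Two geodesics in $\HH$ cross if and only if their endpoint pairs link in the circular order on $\bd\HH$; here $\{\phi(\xi^-),\phi(\xi^+)\}$ links $\{\xi^-,\xi^+\}$ because $\phi(\xi^-)$ is strictly inside the arc $(\xi^-,\xi^+)$ while $\phi(\xi^+)$ is strictly outside it (it has been pushed past $\xi^+$ toward the attracting fixed point, which lies on the far side). Hence $\phi(\lambda)$ and $\lambda$ intersect.

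The main obstacle is getting the endpoint/linking bookkeeping exactly right: one must be careful that "$\phi$ moves $a$ a distance $\ell < d_\HH(a,a')$ in the positive direction along $\alpha$" genuinely forces $\phi(\xi^-)$ into the open arc $(\xi^-,\xi^+)$ and $\phi(\xi^+)$ out of it, rather than some degenerate configuration where $\phi(\xi^+)$ also lands inside. This is where the hypothesis $\ell < d_\HH(a,a')$ (strict, and with the specific segment $[a,a']$) is used in an essential way, and it is worth spelling out via the monotonicity of the projection $\bd\HH \setminus \{\text{fixed points}\} \to \alpha$ under $\phi$. Everything else is routine plane hyperbolic geometry.
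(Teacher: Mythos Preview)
Your third paragraph is correct and is essentially the paper's argument: the paper observes that the projections $\phi(a),\phi(a')$ of $\phi(\lambda)$ onto $\alpha$ satisfy that exactly one of them lies in $[a,a']$, and concludes that one ideal endpoint of $\phi(\lambda)$ separates the ideal endpoints of $\lambda$; this is precisely your linking claim, phrased via projections to $\alpha$ instead of directly on $\partial\HH$. Your first two paragraphs about the perpendiculars $\nu,\nu'$ and the strip between them are an unnecessary detour and do not on their own yield the conclusion (two geodesics each meeting both sides of a third need not cross one another), so you can drop them and keep only the endpoint-linking argument.
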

  
  \begin{proof} 

   Let $\phi$ be a hyperbolic isometry with axis $\alpha$ and translation
    length $\ell<d_\HH(a,a')$. Since $\lambda$ is disjoint from $\alpha$,
    the endpoints of $\lambda$ and $\phi(\lambda)$ are all on the same side
    of $\alpha$. The orthogonal projection of $\phi(\lambda)$ to $\alpha$
    has endpoints $\phi(a)$ and $\phi(a')$. The condition on $\ell$ implies
    that exactly one of these points is between $a$ and $a'$. Hence the
    endpoints of $\lambda$  are separated by one endpoint of
    $\phi(\lambda)$, so $\lambda$ and $\phi(\lambda)$ intersect. \qedhere
   
  \end{proof}
  
  \begin{lemma} \label{Lem:ConstantM} 

     There exists a universal constant $M>0$ such that the following holds.
     Let $X$ be a hyperbolic metric on $S$. Let $\gamma$ be a curve with
     $\ell_X(\gamma) = \ep\leq \ep_B$, and $\tau$ be a shortest curve
     intersecting $\gamma$. Then 
     \[ 2w_\ep\le \ell_X(\tau) \le 4w_\ep+2M. \] 

   \end{lemma}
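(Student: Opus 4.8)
The plan is to bound $\ell_X(\tau)$ above and below using the geometry of the standard collar $U_X(\gamma)$. The lower bound is the easy half: since $\tau$ intersects $\gamma$, it must cross the embedded collar $U_X(\gamma)$, which has width $2w_\ep$, so any essential crossing arc of the collar has length at least $2w_\ep$; hence $\ell_X(\tau) \ge 2w_\ep$. (One should note that $\tau$ crosses all the way across because its endpoints — or rather the curve, being closed and essential — cannot be contained in the collar, as the collar is an annulus with core $\gamma$ and $\tau \ne \gamma$.)

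For the upper bound, the idea is to construct an explicit transverse curve of controlled length and use that $\tau$ is shortest. Take an arc $\tau_\gamma$ crossing $U_X(\gamma)$ perpendicular to $\gamma$; it has length $2w_\ep$. Its two endpoints lie on the two boundary hypercycles of $U_X(\gamma)$, each of which has length $b_\ep \le b_{\ep_B} = O(1)$ by the Collar Lemma and the remark following it. Now one closes up $\tau_\gamma$ to a simple closed curve intersecting $\gamma$ once (or twice): this requires joining the two endpoints by a path in the complement of the open collar. Here I would invoke that $X$ is a hyperbolic surface so, away from the thin parts, diameters are controlled; more carefully, the two endpoints sit on hypercycles bounding the collar, and one can travel along such a hypercycle (length $O(1)$) and then connect across using a path whose length is bounded by a universal constant $M$ — this is precisely where the constant $M$ of the statement enters. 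Concretely, one should be able to arrange a transverse curve of length at most $2w_\ep + (\text{two more crossings of the collar or a return along the core side}) + O(1)$, and the clean bound $4w_\ep + 2M$ suggests the intended construction uses $\tau_\gamma$ together with its reflection (or a second perpendicular arc), giving $2 \cdot 2w_\ep = 4w_\ep$, plus two "closing" segments each of length at most $M$. Since $\tau$ is a shortest curve meeting $\gamma$, we get $\ell_X(\tau) \le 4w_\ep + 2M$.

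The main obstacle is the upper bound, specifically producing the universal constant $M$ that bounds the length of the "closing" paths joining the collar-crossing arc(s) back up into a closed curve, independently of how thin the rest of $X$ is. Thinness of $X$ elsewhere is not actually an issue for this particular construction because we only need to connect two points on $\bd U_X(\gamma)$, and one can always route the connecting path to stay near $\gamma$ (e.g.\ along the boundary hypercycle of the collar, which has length $b_\ep \le b_{\ep_B}$). So $M$ can essentially be taken to be $b_{\ep_B}$ up to a small additive error, and the argument should go through cleanly. A secondary point requiring a line of justification is that the resulting curve is essential and not homotopic into $\gamma$ — this follows because it intersects $\gamma$ transversally an odd (or otherwise nonzero algebraic) number of times by construction.
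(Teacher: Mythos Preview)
Your lower bound is fine and matches the paper. The upper bound, however, has a genuine gap.

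You propose to close up the perpendicular arc(s) across $U_X(\gamma)$ by routing the connecting paths ``near $\gamma$,'' e.g.\ along the boundary hypercycles of the collar. But any closed curve built this way --- two arcs crossing the collar joined by arcs on $\partial U_X(\gamma)$ --- lies entirely in the closed annulus $\overline{U_X(\gamma)}$. Such a curve is therefore either inessential or homotopic to a power of $\gamma$; in particular it has geometric intersection number zero with $\gamma$ and is \emph{not} a transverse curve. Your remark that the result has ``nonzero algebraic'' intersection with $\gamma$ is exactly what fails: a curve trapped in the collar annulus cannot link $\gamma$ nontrivially. So the connecting paths are forced to leave the collar and travel through $X \setminus U_X(\gamma)$, and the length of such paths is \emph{not} a priori bounded --- the complement could contain other very thin parts with long collars.

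The paper handles this by first placing $\gamma$ in a short pants decomposition $\calP$ (or observing that some $\ep_B$--short pants curve already crosses $\gamma$, in which case $\ell_X(\tau)\le\ep_B$ and we are done). When $\gamma\in\calP$, it sits inside a one-holed torus or four-holed sphere $R$ whose other boundary components are $\ep_B$--short. The key point is that the complement in $R$ of the half-collars of $\partial R$ has diameter bounded by a universal constant $M$; the connecting paths for a transverse curve (meeting $\gamma$ once or twice) can then be taken inside this bounded-diameter region, yielding $\ell_X(\tau)\le 4w_\ep+2M$. The pants decomposition is what prevents the connecting paths from having to cross other thin parts of $X$.
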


   \begin{proof}
     
     Let $\calP$ be a short pants decomposition on $X$. If
     $\ell_X(\gamma)=\epsilon\leq \ep_B$, then either $\gamma$ intersects a
     curve in $\calP$ or $\gamma\in\calP$. In the former case, we have
     immediately that $\ell_X(\tau)\leq \ep_B$. In the latter case,
     $\gamma$ is contained in a subsurface $R\subseteq S$ which is either a
     one-holed torus or a four-holed sphere with boundary curves in
     $\calP\ssm \gamma$. 
     
     The diameter of the complement of half collar neighborhoods of the
     boundary curves of $R$ is uniformly bounded above by some $M>0$
     independent of $X$.  Thus
     there is a curve $\tau$ that intersects  $\gamma$ once or twice whose
     length is at least $2w_\ep$ and at most $4w_\ep+2M$. \qedhere

   \end{proof}

   \begin{lemma} \label{Lem:Long}

     For any $\ep\in(0, \ep_B)$, there exists $L_\ep>0$ such that the
     following statement holds. Let $X$ be a hyperbolic metric on \s and
     let $R$ be an $\ep$--thick subsurface. Let $\alpha$ be a simple closed
     geodesic contained in $R$. Then any segment of $\alpha$ of length at
     least $L_\ep$ must intersect some curve in a short marking $\mu_R$ on
     $R$.

  \end{lemma}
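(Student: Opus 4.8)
The plan is to argue by a compactness/area estimate. Fix $\ep \in (0,\ep_B)$ and let $R$ be an $\ep$-thick subsurface of a hyperbolic surface $X$. Let $\mu_R$ be a short marking on $R$, whose base curves have length at most $\ep_B$ (and in fact, by the thickness hypothesis, all curves in $R$ have length at least $\ep$, so the geometry of $R$ is controlled). The key observation is that the base curves of $\mu_R$ form a pants decomposition of $R$, so their $b_{\ep_B}$-collars — together with collars of $\partial R$ — cover $R$ up to a region whose closure $Y$ has diameter bounded above by a universal constant $M$ (this is exactly the type of estimate used in \lemref{ConstantM}: the complement of the half-collars of a pants decomposition in a pair of pants has uniformly bounded diameter). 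More precisely, $R$ is covered by the standard collars of the curves in $\mu_R$ together with finitely many pieces each of diameter at most $M$, where the number of such pieces is bounded in terms of $\xi(R) \le \xi(\s)$.

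Now suppose $\sigma$ is a segment of a simple closed geodesic $\alpha \subset R$ that is disjoint from $\mu_R$ (i.e.\ from all base curves and transverse curves of $\mu_R$). Since $\alpha$ is a geodesic disjoint from the base curves of $\mu_R$, it must be contained in a single complementary pair of pants $P$ of the pants decomposition; and since it is also disjoint from the transverse curves of $\mu_R$, it is confined to a region of $P$ of bounded diameter. Indeed, inside a pair of pants $P$ with boundary lengths at most $\ep_B$, a simple geodesic disjoint from the boundary must be one of the boundary curves or a seam-type arc, and the subset of $P$ it can occupy once we also remove the collars traversed by the transverse curves is uniformly bounded; alternatively, one observes that the part of $P$ not crossed by the transverse curves is a neighborhood of the boundary of uniformly bounded diameter. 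Either way, $\sigma$ lies in a set of diameter at most some universal $D$. Setting $L_\ep = 2D$ (or any value exceeding this bounded diameter) forces the conclusion: any subsegment of $\alpha$ of length at least $L_\ep$ cannot be disjoint from $\mu_R$, hence must intersect some curve in $\mu_R$.

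The main obstacle is making the "bounded diameter of the uncovered region" claim clean and uniform: one must verify that, because $R$ is $\ep$-thick, the transverse curves of $\mu_R$ have length bounded above by a constant depending only on $\ep$ (using the Collar Lemma and \lemref{ConstantM}: a transverse curve crosses at most two collars plus a bounded-diameter core region, so its length is at most $4w_\ep + 2M$ or so), and that these transverse curves, crossing every collar of the pants decomposition, leave only a region of controlled diameter. One then also needs that a simple geodesic in $R$ disjoint from $\mu_R$ is actually confined — here one uses that $\alpha$ simple and disjoint from the pants curves lies in one pair of pants, and simpleness plus disjointness from transverse curves pins down its homotopy type and, via the collar structure, its actual location. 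The dependence of $L_\ep$ on $\ep$ (and not just on $\xi(\s)$) enters precisely through $w_\ep$ and the collar widths, which blow up as $\ep \to 0$; this is why the statement allows $L_\ep$ to depend on $\ep$.
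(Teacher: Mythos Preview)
Your approach is essentially the same as the paper's: confine a segment missing the pants curves to a single pair of pants, then use the collar/truncated-core decomposition to bound its length if it also misses the transverse curves. The paper's proof makes precise the step you flag as the ``main obstacle'': rather than arguing via bounded diameter (which does not directly bound the length of a geodesic segment in a non-convex region), it writes the arc in the pair of pants as $\omega_1 \cup \bar\omega \cup \omega_2$ with $\bar\omega$ in the truncated core (hence $\ell_X(\bar\omega)=O(1)$) and each $\omega_i$ in a collar of a pants curve $\gamma_i$, and then invokes the estimate $\big|\ell_X(\omega_i) - \I(\tau_{\bar\gamma_i},\omega)\,\ell_X(\gamma_i)\big| \ladd w_\ep$ from \cite[Lemma~3.1]{LRT2} to conclude that if $\omega_i$ is long then it must meet the transverse arc $\tau_{\bar\gamma_i}\subset \bar\gamma_i$.
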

  
  \begin{proof}
  
    Let $Y \subseteq R$ be a pair of pants obtained by cutting $R$ along
    the pants curves in $\mu_R$. For each $\gamma \subset \partial Y$, let
    $U_Y(\gamma) = U_X(\gamma) \cap Y$, where $U_X(\gamma)$ is the standard
    collar neighborhood of $\gamma$. Each boundary component of $Y$ is
    either a pants curve in $\mu_R$ or a boundary curve of $R$. Since $R$
    is $\ep$--thick, the collar width of every pants curve in $\mu_R$ is at
    most $w_\ep$. For each $\gamma \subset \partial Y$ that is not in
    $\partial R$, let $\tau_\gamma$ be a component of $\bgamma \cap
    U_Y(\gamma)$, where $\bgamma$ is the dual curve to $\gamma$ in $\mu_R$.
    Now consider the truncated pants \[ \overline{Y} = Y \setminus
    \bigcup_{\gamma \subset \partial Y} U_Y(\gamma). \]

    Let $\alpha$ be a simple geodesic curve in $R$ and let $\omega$ be a
    segment of $\alpha$ contained in $Y$. Note that the segment $\omega$
    can enter each standard collar neighborhood at most once. Thus,
    $\omega$ can be divided into 3 pieces: 
    \[
      \omega = \omega_1 \cup \bomega \cup \omega_2,
    \]
    where $\bomega = \omega \cap \overline{Y}$, and for each $i=1,2$, there
    is a boundary curve $\gamma_i$  of $Y$ such that $\omega_i = \omega
    \cap U_Y(\gamma_i)$. Since $\alpha \subseteq R$, $\omega$ cannot enter
    the standard collar of any curve in $\partial R$, so each $\gamma_i$ is
    a pants curve of $\mu_R$. By \cite[Lemma 3.1]{LRT2}, we have
    \begin{align} \label{Eqn:Arc}
      \ell_X(\bomega) = O(1) 
    \quad
    \text{and} 
    \quad
      \Big| 
      \ell_X(\omega_i) - \I(\tau_{\bar\gamma_i},\omega)\ell_X(\gamma_i)
      \Big| 
      \ladd
      w_\ep. 
    \end{align}
    Now, if $\omega$ is long enough and it does not intersect a pants curve
    of $\mu_R$, then $\omega$ is contained in a pair of pants $Y$. By
    \eqnref{Arc} then $\I(\omega,\tau_{\bar\gamma_i})\geq 1$ for some
    $i=1,2$ and $\omega$ must intersect $\bar\gamma_i$ which is a curve of
    $\mu_R$. \qedhere 

  \end{proof}
  
  \begin{lemma}\label{Lem:len comp in collar}

    Let $X$ be a hyperbolic metric on $S$. Given a constant $b$ and let $U$
    be an annulus bounded by hypercycles with boundary lengths at most $b$.
    Suppose $\tau\subset U$ and $\eta\subset U$ are two arcs connecting the
    boundaries of $U$, where $\tau$ is a geodesic arc. Then
    \[
    \ell_X(\tau) \leq \ell_X(\eta)+(\I(\tau,\eta)+1) b.
    \]
    If, in addition, $\tau$ and $\eta$ intersect at least twice and the
    distance between any two consecutive intersection points  along $\tau$
    is at least  $D>3b$,  then \[\ell_X(\tau)\leq
    \frac{\ell_X(\eta)}{(1-\frac{3b}{D})}.\]
  \end{lemma}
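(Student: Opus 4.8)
The plan is to work entirely inside the annulus $U$ and compare the geodesic arc $\tau$ with the competitor arc $\eta$ by cutting both into subarcs at their mutual intersection points. First I would record the basic geometry of $U$: since $U$ is bounded by hypercycles of length at most $b$, any subarc of $U$ that runs along a boundary hypercycle between two points has length at most $b$ (the whole hypercycle has length $\le b$), and more generally I will use that one can ``transfer'' between $\tau$ and $\eta$ along the boundary at a cost of at most $b$ per transfer. For the first inequality, let $x_1,\dots,x_k$ be the intersection points of $\tau$ and $\eta$ in order along $\tau$, with $k=\I(\tau,\eta)$; they divide $\tau$ into $k+1$ subarcs and likewise divide $\eta$ into subarcs. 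The idea is that $\tau$, being a geodesic, is the shortest path in its homotopy class rel endpoints, but we cannot apply that directly since $\eta$ is a different arc; instead, build a path $\eta'$ from $\eta$ by inserting short boundary detours so that $\eta'$ is homotopic rel $\partial U$ to $\tau$. Concretely, $\eta$ and $\tau$ share the same endpoints on $\partial U$ up to sliding along the hypercycles, so $\len(\tau)\le\len(\eta')\le\len(\eta)+(\text{number of detours})\cdot b$, and the number of detours needed is at most $\I(\tau,\eta)+1$ (one possible correction at each endpoint plus one at each interior crossing, suitably accounted). This yields $\ell_X(\tau)\le\ell_X(\eta)+(\I(\tau,\eta)+1)b$.

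For the second, quantitative inequality, assume $\tau$ and $\eta$ cross at least twice and that consecutive intersection points along $\tau$ are at distance $\ge D>3b$. Let $n=\I(\tau,\eta)\ge 2$. Between two consecutive intersection points $x_i,x_{i+1}$, the subarc $\tau_i$ of $\tau$ and the corresponding subarc $\eta_i$ of $\eta$ together bound a disk in $U$ (since both are simple and $U$ is an annulus, a standard innermost-disk/bigon argument applies), so $\len(\tau_i)\le\len(\eta_i)+0$ is false in general — rather, since $\tau_i$ is a geodesic bigon side it is the shorter one: $\len(\tau_i)\le\len(\eta_i)$. Hence $\sum_{i=1}^{n-1}\len(\tau_i)\le\sum_{i=1}^{n-1}\len(\eta_i)\le\ell_X(\eta)$. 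It remains to control the two ``end'' pieces $\tau_0$ (from $\partial U$ to $x_1$) and $\tau_n$ (from $x_n$ to $\partial U$). Here I would use $D>3b$: each end piece, being a geodesic segment in $U$ whose far endpoint is an honest crossing with $\eta$ while the near endpoint lies within $b$ of a crossing point after a boundary slide, has length at most $\tfrac{3b}{D}\cdot(\text{length of the adjacent interior piece})$, or alternatively bound $\len(\tau_0)+\len(\tau_n)\le \tfrac{3b}{D}\,\ell_X(\tau)$ directly by comparing with the nearest full interior gap of length $\ge D$. Combining, $\ell_X(\tau)=\sum_{i=0}^{n}\len(\tau_i)\le\ell_X(\eta)+\tfrac{3b}{D}\ell_X(\tau)$, and solving gives $\ell_X(\tau)\le\ell_X(\eta)/(1-\tfrac{3b}{D})$.

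The main obstacle I anticipate is making the bigon/innermost-disk comparison fully rigorous inside an \emph{annulus} rather than a disk: two simple arcs crossing the annulus can be homotopic-rel-boundary-to-each-other only after a controlled slide along the boundary hypercycles, and one must verify that the subarcs $\tau_i,\eta_i$ between consecutive crossings genuinely cobound embedded disks (no ``wrap-around'' of the annulus can occur because any wrap would force extra intersections with $\eta$ or with the boundary). The cleanest way around this is to lift to the universal cover (the strip), where $\tau$ lifts to a geodesic line and $\eta$ to a line with the same endpoints on the two boundary lines of the strip up to the $b$-bounded slide, and then the bigon comparison is literal: a geodesic is shortest, and each end-piece estimate becomes an elementary hyperbolic-trigonometry computation using that the strip has bounded width (the width being comparable to the collar width, controlled by $b$). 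I would phrase the end-piece bound as the inequality $\len(\tau_0)\le \tfrac{3b}{D}\,D_0$ where $D_0\ge D$ is the length of the first interior gap, which is the step that consumes the hypothesis $D>3b$ and keeps the denominator $1-\tfrac{3b}{D}$ positive.
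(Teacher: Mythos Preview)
Your argument for the first inequality is essentially the paper's: lift to the universal cover (a strip), choose a lift of $\eta$ whose initial endpoint is within one fundamental domain (length $\le b$) of the initial endpoint of $\tilde\tau$, and observe that the terminal endpoints then differ by at most $\I(\tau,\eta)$ fundamental domains along the other boundary. Since $\tau$ is geodesic, its length is at most that of this concatenated competitor path.

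Your approach to the second inequality, however, has a genuine gap. The bigon claim $\len(\tau_i)\le\len(\eta_i)$ fails in the annulus: consecutive intersection points $x_i,x_{i+1}$ along $\tau$ lie, in the universal cover, on \emph{different} lifts $\tilde\eta$ and $T\tilde\eta$ of $\eta$ (where $T$ is the deck translation). The subarc of $\eta$ from $x_i$ to $x_{i+1}$ therefore concatenates with $\tau_i$ to a loop that wraps once around the core of $U$, not to the boundary of a disk, so the geodesic side $\tau_i$ need not be shorter. (This is exactly the ``wrap-around'' you worried about; it is not ruled out by extra intersections, because the translates of $\tilde\eta$ are pairwise disjoint.) Your end-piece bound $\len(\tau_0)+\len(\tau_n)\le\frac{3b}{D}\,\ell_X(\tau)$ is likewise unjustified: nothing in the hypotheses controls how far the first or last intersection point is from $\partial U$.

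The paper's proof of the second inequality avoids all of this and is purely algebraic. From the gap hypothesis one reads off $\I(\tau,\eta)-1\le \ell_X(\tau)/D$, and since there are at least two intersection points, $\ell_X(\tau)\ge D$, so also $2\le 2\ell_X(\tau)/D$. Hence $\I(\tau,\eta)+1\le 3\ell_X(\tau)/D$. Substituting this into the already-proved first inequality gives
\[
\ell_X(\tau)\le \ell_X(\eta)+\frac{3b}{D}\,\ell_X(\tau),
\]
and solving for $\ell_X(\tau)$ yields the claim.
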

  
  \begin{proof}
    Lifting to the universal cover, we can construct a path with the same
    endpoints  as $\tau$ which is a concatenation of a segment of a
    boundary component of $U$ of length at most $b$, followed by  $\eta$
    and a segment of the other boundary component of $U$ of length at most
    $\I(\eta, \tau)b$. Hence
    \begin{equation}\label{bb}
      \ell_X(\tau) \leq \ell_X(\eta)+(\I(\tau,\eta)+1) b
    \end{equation}
    which is  the first statement of the lemma. 
    
    Suppose  now that the distance between any two consecutive intersection
    points of $\tau$ and $\eta$ along $\tau$ is at least $D$.  We have
    $\ell_X(\tau)\geq D$ and $\I(\tau,\eta)-1\leq \frac{\ell_X(\tau)}{D}$.
    Incorporating this in \ref{bb} gives $$ \ell_X(\tau)\leq \ell_X(\eta)+
    \left( \frac{\ell_X(\tau)}{D}+2 \right) b \leq \ell_X(\eta)+\frac{3b} D
    \ell_X(\tau), $$ which implies the second statement of the lemma.
    \qedhere
  \end{proof}
  
  We proceed with the following lemma which is a modification of
  \cite[Lemma 3.2]{LRT2} about the length of a curve inside a  pair of
  pants.

  \begin{lemma} \label{Lem:Tooshort}
    For any $\ell>0$ there exists $D_0=D_0(\ell)$ such that the following
    holds. Let $X$ be a hyperbolic metric on $S$ and $\calP$ a pair of
    pants in $X$ with boundary lengths at most $\ell$. Let $\gamma$ be a
    simple closed curve (not necessarily a geodesic) that intersects
    $\partial \calP$ and let $\omega$ be a simple geodesic arc contained in
    $\calP$ with endpoints on $\gamma$ and has at least 2 intersections
    with $\gamma$ in the interior.  Write $\omega$ as a concatenation of
    $\omega_1, \omega_2$ and $\omega_3$ where $\omega_i$'s are segments of
    $\omega$ with endpoints on $\gamma$. Suppose that the distance along
    $\omega$ between any two consecutive intersection points with $\gamma$
    is at least $D_0$. Then for at least one $i\in\{1,2,3\}$ we have
    $$\ell_X(\omega_i)\leq 6\, \ell_X(\gamma).$$
  \end{lemma}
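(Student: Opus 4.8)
The plan is to locate, among $\omega_1,\omega_2,\omega_3$, a segment that is homotopic rel endpoints into a curve assembled from sub-arcs of $\gamma$ and of $\partial\calP$; being a geodesic such a segment is no longer than that curve, and the hypothesized lower bound $D_0$ on the segment lengths upgrades this crude bound to the desired $6\,\ell_X(\gamma)$.

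\emph{Set-up.} List the points of $\omega\cap\gamma$ in the order they occur along $\omega$ as $p_0,p_1,p_2,p_3,\dots$, with $p_0$ an endpoint of $\omega$; the segments $\omega_i=[p_{i-1},p_i]$ ($i=1,2,3$) then each have interior disjoint from $\gamma$, and by hypothesis $\ell_X(\omega_i)\ge D_0$. Since $\omega_i\subset\calP$ and its interior avoids $\gamma$, it lies in a single component $Q_i$ of $\calP\smallsetminus(\gamma\cap\calP)$. As $\gamma\cap\calP$ is a disjoint union of simple arcs in a pair of pants, each $Q_i$ is a disk, an annulus, or a pair of pants, and every boundary component $B$ of $Q_i$ is either a curve of $\partial\calP$ or an alternating concatenation of sub-arcs of $\gamma$ and of $\partial\calP$; since $B$ is embedded and the arcs of $\gamma\cap\calP$ are disjoint, $B$ runs over each $\gamma$-arc at most twice and over $\partial\calP$ with total length at most $3\ell$, so $\ell_X(B)\le 2\,\ell_X(\gamma)+3\ell$. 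Finally, the endpoints of $\omega_i$ are interior intersection points of $\omega$ with $\gamma$, hence lie on $\gamma$-sub-arcs of $\partial Q_i$, so they lie on a boundary component of $Q_i$ that is not a curve of $\partial\calP$.

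\emph{The good case.} Call $\omega_i$ \emph{good} if it is homotopic rel endpoints to a sub-arc of a single boundary component $B$ of $Q_i$. This holds automatically when $Q_i$ is a disk; it holds when $Q_i$ is an annulus and both endpoints of $\omega_i$ lie on the same boundary circle $B$, because a simple arc with both endpoints on one boundary circle of an annulus is homotopic rel endpoints to a sub-arc of that circle; and it holds when $Q_i$ is a pair of pants and $\omega_i$ is boundary-parallel. If $\omega_i$ is good then, being a geodesic, it is no longer than that sub-arc, so $\ell_X(\omega_i)\le\ell_X(B)\le 2\,\ell_X(\gamma)+3\ell$. Put $D_0=D_0(\ell):=\max\{6\ell,1\}$. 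Then $\ell_X(\omega_i)\ge D_0\ge 6\ell$ forces $2\,\ell_X(\gamma)+3\ell\ge 6\ell$, i.e. $3\ell\le 2\,\ell_X(\gamma)$, whence $\ell_X(\omega_i)\le 2\,\ell_X(\gamma)+3\ell\le 4\,\ell_X(\gamma)\le 6\,\ell_X(\gamma)$. So it suffices to show that at least one of $\omega_1,\omega_2,\omega_3$ is good.

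\emph{The remaining case and the main obstacle.} The ways $\omega_i$ can fail to be good are: $Q_i$ is a pair of pants and $\omega_i$ separates its two $\partial\calP$-boundary curves (so $\omega_i$ ``wraps'' a boundary curve of $\calP$, and if that curve is very short $\omega_i$ can be far longer than any fixed multiple of $\ell_X(\gamma)$), or $Q_i$ is an annulus whose two boundary circles both carry arcs of $\gamma$ and $\omega_i$ crosses between them. For such indices the estimate genuinely fails, so the proof must fall back on another index, and the crux — which I expect to be the main obstacle — is the purely topological claim that $\omega_1,\omega_2,\omega_3$ cannot all fail to be good. The plan for this is to exploit that $\omega=\omega_1\cup\omega_2\cup\omega_3\cup\cdots$ is a \emph{simple} arc in the pair of pants $\calP$, that consecutive segments $\omega_i,\omega_{i+1}$ lie on opposite sides of the arc of $\gamma$ through $p_i$, and that each boundary curve of $\calP$ disjoint from $\gamma$ is a full boundary component of at most one component of $\calP\smallsetminus(\gamma\cap\calP)$: two consecutive non-good segments in a common complementary pair of pants would force $\omega$ to wrap the same boundary curve of $\calP$ twice, while non-good segments in distinct complementary pairs of pants would require two disjoint two-element subsets among the three boundary curves of $\calP$ — both impossible, so some $\omega_i$ is good. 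Once a good index is available, the estimate of the previous paragraph finishes the proof; note that the constant $D_0$ enters only through this final length bookkeeping, and no further hyperbolic geometry is needed.
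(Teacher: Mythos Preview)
Your approach is genuinely different from the paper's. The reduction to the ``good'' case is sound: if some $\omega_i$ is boundary-parallel in its complementary region $Q_i$, then since $\omega_i$ is geodesic it is no longer than the relevant boundary circle $B$, and your bound $\ell_X(B)\le 2\,\ell_X(\gamma)+3\ell$ together with $D_0=6\ell$ does yield $\ell_X(\omega_i)\le 6\,\ell_X(\gamma)$. The paper argues instead via the standard collars $U_j$ of the boundary curves: the complement of the collars in $\calP$ has uniformly bounded diameter, so the middle segment $\omega_2$ spends at least a third of its length inside some $U_j$, and that long collar sub-arc shares an endpoint $p$ with one of $\omega_1,\omega_3$; since $\gamma$ passes through $p$ it must cross one of the two sub-annuli of $U_j$ determined by $p$, and one compares the $\omega$-arc in that sub-annulus to the crossing $\gamma$-arc via \lemref{len comp in collar}. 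So the paper's argument is geometric (collar widths, hypercycle lengths) where yours would be purely topological, and your $D_0$ is correspondingly cleaner.

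The gap is exactly where you flag it. Your ``plan'' for the topological claim that not all three $\omega_i$ can fail to be good does not go through as written. You implicitly treat a non-good $\omega_i$ as sitting in a complementary pair of pants whose other two boundary circles are full curves of $\partial\calP$, but a complementary pair of pants can have one, two, or all three of its boundary circles mixed (alternating $\gamma$-arcs and $\partial\calP$-arcs), so the ``two disjoint two-element subsets among three boundary curves'' count does not apply. You correctly list the annulus case (both boundary circles mixed, $\omega_i$ crossing between them) as a second non-good configuration, but your plan never returns to handle it. And the assertion that two consecutive non-good segments in a common region force $\omega$ to ``wrap the same boundary curve of $\calP$ twice'' needs a real argument --- note that when the $\gamma$-arc through $p_i$ is non-separating in $\calP$ the two sides genuinely lie in the same region, so this case does occur, but the wrapping conclusion is not obvious from simplicity of $\omega$ alone. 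The topological statement may well be true, but what you have is a heuristic, not a proof; the paper's collar argument sidesteps the issue entirely.
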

  
  \begin{proof}

    The proof is similar to that of \cite[Lemma 3.2]{LRT2}, but requires
    more care when it comes to additive and multiplicative errors. 
    
    Let $\alpha_i$, $i=1,2,3$, be the boundary components of $\calP$ and
    $U_i\subseteq \calP$ the standard collar of $\alpha_i$ in $\calP$. By
    (\ref{eq:len bd collar}), the length of the non-geodesic boundary of
    $U_i$ is bounded above by  $b=\ell \coth(\ell/2)$. Let $M>0$ be the
    length of the longest simple geodesic arc in $\calP$ that is disjoint
    from $U_i$'s. Note that $M$ depends only on $\ell$. 
   
    Let  $D_0\geq \max\{3M,6b\}$. We claim the following holds. 
    \begin{claim} There are $i,j\in \{1,2,3\}$ such that the following holds. 
    There is an annulus $U\subset U_j$ bounded by hypercycles, a subarc
      $\tau$ of $\omega_i$ contained in $U$ with $$\ell_X(\tau)\geq
      \frac{1} {3}\ell_X(\omega_i),$$ a subarc $\eta\subset U$ of $\gamma$
      that connects the boundary components of $U$, so that one of the
      endpoints of $\tau$ is on $\eta$.
    \end{claim} 
    \begin{proof}
      It follows from \cite[Lemma 3.1]{LRT2} that $\omega_2$ consists of
      (possibly empty) an arc  in some $U_j$ followed by an arc in
      $\calP\setminus\cup_i U_i$ followed by an arc  in some $U_k$. Hence
      by the assumption on $D_0$, either the first or the last arc has
      length at least $\frac{1}{3}\ell_X(\omega_2)$; we call this arc
      $\zeta$. Let $j$ be such that $\zeta\subseteq U_j$. The arc $\zeta$
      shares an endpoint $p$ with some $\omega_k\subseteq U_j$ for $k=1$ or
      $k=3$.  Now we have a division of $U_j$ into two annuli with disjoint
      interiors, one containing $\zeta$ and the other containing
      $\omega_k$. Since $\gamma$ passes through $p$, it crosses at least
      one of the annuli. We denote the annulus by $U$ and the subarc
      ($\omega_k$ or $\zeta$) of $\omega$ that is contained in $U$ by
      $\tau$. Clearly the length of $\tau$  is at least a third of the
      length of the arc $\omega_i$ that contains it and the claim follows.
     \end{proof}
     
     We now finish the proof of the lemma. Let $\tau$, $\eta$, $\omega_i$ and
     $U$  be as in the claim. The length of either of the boundary
     components of $U$ is at most $b$. If $\tau$ and $\eta$ intersect at
     least twice, we apply Lemma \ref{Lem:len comp in collar} and $D_0\geq
     6b$ to get
     \[
       \ell_X(\omega_i)\leq 3\ell_X(\tau)\leq
       \frac{3\ell_X(\eta)}{(1-\frac{3b}{D_0})}\leq
       \frac{3\ell_X(\gamma)}{(1-\frac{3b}{D_0})}\leq 6\,\ell_X(\gamma).\] 
     If $\tau$ and $\eta$ intersect once, then $\ell_X(\tau)\leq
     \ell_X(\eta)+b$. Further, since each $\omega_i$ is at least $D_0$
     long,
    $\ell_X(\tau)\geq \frac 1 3 D_0\geq 2b$ and so $\ell_X(\tau)\leq
    2\ell_X(\tau)$ which implies
    \[
    \ell_X(\omega_i)\leq 6\, \ell_X(\gamma). \qedhere
    \]
  \end{proof}
   
  We now introduce another useful tool, which is the notion of a \emph{thin
  quadrilateral} with long sides lying along the boundary of some proper
  subsurface $R\subset S$. In a lemma below, we will show the existence of
  such quadrilaterals when the boundary of $R$ is sufficiently long.

  \begin{definition}[Quadrilateral] 
    
    Let $R \subset \s$ be a proper subsurface. By a \emph{quadrilateral}
    $Q$ in $R$ we mean the image of a continuous map $f \colon [0,1] \times
    [0,1] \to R$, where $f$ is an embedding on $(0,1) \times (0,1)$, the
    edges $[0,1] \times \{0\}$ and $[0,1] \times \{1\}$ are mapped to $\bd
    R$, and all other points are mapped to the interior of $R$. We will
    call the images of $[0,1] \times \{1\}$ and $[0,1] \times \{0\}$ the
    {\em top} and {\em bottom} edges, respectively, and $\{0\} \times
    [0,1]$ and $\{1\} \times [0,1]$ the \emph{side edges} of $Q$. Also,
    note that top and bottom edges of the quadrilateral may not be
    distinct. By \emph{width} of $Q$ we will mean the maximal length over
    the top and bottom edges. We say $Q$ is \emph{$\delta$--thin} if the
    side lengths are at most $\delta$ and $Q$ is \emph{$L$--wide} if its
    width is at least $L$.

  \end{definition} 
  
  \begin{lemma}\label{Lem:Quad} 
    
    Given $\delta \in (0,1)$, let $L = \frac{2\pi|\chi(\s)|}{\delta}$ and
    $K = \frac{|\chi(\s)|}{\sinh^2(\delta/2)}$. Let $X$ be a hyperbolic
    metric on \s and $R\subset S$ a proper non-annular subsurface with
    $\ell_X(\bd R) > 2KL$. Then there is a non-empty collection of pairwise
    disjoint $\delta$--thin quadrilaterals in $R$ with total width at least
    $\ell_X(\bd R)/4$, and at least one among them is $L$--wide. Moreover,
    we can ensure this collection consists of at most $K$ quadrilaterals.

  \end{lemma}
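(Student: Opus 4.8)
The plan is to read the quadrilaterals off the orthogonal geometry of $\bd R$: when $\bd R$ is long but $R$ has bounded area, the $\tfrac{\delta}{2}$--neighborhood of $\bd R$ in $R$ must fold onto itself, and the folds are the quadrilaterals.

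First I would fix Fermi coordinates along $\bd R$. By Gauss--Bonnet, $\mathrm{Area}(R)=2\pi|\chi(R)|\le 2\pi|\chi(\s)|$. For $p\in\bd R$ let $\nu_p$ be the inward unit-speed normal geodesic, and let $\sigma(p)\in(0,\infty]$ be the distance along $\nu_p$ to the cut locus $\Gamma$ of $\bd R$ in $R$, i.e.\ the supremum of $t$ for which $\nu_p|_{[0,t]}$ is the unique shortest arc from its endpoint to $\bd R$. The map $(p,t)\mapsto\nu_p(t)$ is injective on $\{0\le t<\sigma(p)\}$ with area element $\cosh t\,dt\,dp$, so
\[
\sinh(\delta/2)\cdot\bigl|\{\,p\in\bd R:\sigma(p)>\delta/2\,\}\bigr|
\;\le\;\int_{\bd R}\int_0^{\min(\sigma(p),\,\delta/2)}\cosh t\;dt\,dp
\;\le\;2\pi|\chi(\s)| .
\]
Since $\ell_X(\bd R)>2KL=\dfrac{4\pi|\chi(\s)|^2}{\delta\sinh^2(\delta/2)}$ and $\delta\sinh(\delta/2)<1\le|\chi(\s)|$, the set $B:=\{\,p\in\bd R:\sigma(p)\le\delta/2\,\}$ has length at least $\tfrac12\ell_X(\bd R)$.

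Next comes the construction. For almost every $p\in B$ the cut point $\nu_p(\sigma(p))$ lies in the interior of an edge of $\Gamma$ and carries exactly one further shortest geodesic to $\bd R$, meeting it perpendicularly at a point $\phi(p)$ with $\sigma(\phi(p))=\sigma(p)$; thus $\phi$ is an orientation--reversing involution of $B$, defined off a finite set. The arc $\beta_p:=\nu_p|_{[0,\sigma(p)]}\cup\nu_{\phi(p)}|_{[0,\sigma(p)]}$ lies in $R$, has length $2\sigma(p)\le\delta$, endpoints $p,\phi(p)\in\bd R$, and is essential: it meets $\bd R$ at right angles at both ends, so a disk bounded by $\beta_p$ and an arc of $\bd R$ would have two right-angled corners, forcing negative area by Gauss--Bonnet. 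Off a finite set, $B$ breaks into arcs $a_1,\dots,a_N$ on each of which $\phi$ is a homeomorphism onto another one of them; the arcs pair up as $\{a,\phi(a)\}$, and each pair determines a quadrilateral $Q$ obtained by sweeping $\beta_p$ over $a$: top edge $a$, bottom edge $\phi(a)$, side edges $\beta_{a(0)},\beta_{a(1)}$ of length $\le\delta$, with embedded interior because the $\beta_p$'s are pairwise disjoint. The resulting quadrilaterals are pairwise disjoint and $\delta$--thin, and
\[
\sum_Q \mathrm{width}(Q)=\sum_{\text{pairs}}\max\bigl(|a|,|\phi(a)|\bigr)\;\ge\;\tfrac12\sum_j|a_j|=\tfrac12|B|\;\ge\;\tfrac14\ell_X(\bd R).
\]

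It remains to bound the number of quadrilaterals and to locate an $L$--wide one, and this is the step I expect to be the main obstacle, since it is where the explicit values of $L$, $K$ and the threshold $2KL$ are calibrated. The cut locus $\Gamma$ is a finite graph that is a spine of $R$, so $\chi(\Gamma)=\chi(R)$ and $\Gamma$ has $O(|\chi(\s)|)$ vertices and edges; along each edge the function $d(\param,\bd R)$ is $1$--Lipschitz and, by convexity of the competing equidistant fronts, has at most one interior minimum, so the places where $\sigma$ crosses the value $\delta/2$ and where the cut point passes a vertex of $\Gamma$ --- which are exactly the breakpoints of the decomposition $a_1,\dots,a_N$ --- number only $O(|\chi(\s)|)$; a careful count of these contributions gives $N\le K$. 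For the $L$--wide quadrilateral one passes to the longest component $\gamma_*$ of $\bd R$, whose length is a universal fraction of $\ell_X(\bd R)$ and hence large compared with $L$, while the part of $\gamma_*$ lying outside $B$ is both short (of total length less than $2L$) and cut into only $O(|\chi(\s)|)$ intervals; tracking the constants shows that one of the pieces of $B$ inside $\gamma_*$ must have length at least $L$, which yields a quadrilateral of width at least $L$. The remaining fiddly points, which I would handle together with the above, are the degenerate configurations --- cut points at vertices of $\Gamma$, boundary components of $R$ lying entirely in $B$, and a piece $a_j$ that folds back onto its own component --- which must be treated so that $\phi$, the arcs $a_j$, and the sweeping maps are genuinely defined and the quadrilaterals honestly embedded.
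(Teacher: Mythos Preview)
Your cut-locus approach is sound in outline but heavier than necessary, and the paper takes a more direct route. Instead of Fermi coordinates and the spine $\Gamma$, the paper works in the universal cover with the single longest component $\alpha$ of $\bd R$ (note the convention here is $\ell_X(\bd R)=\max_{\alpha\subset\bd R}\ell_X(\alpha)$, so your $\gamma_*$ already has length exactly $\ell_X(\bd R)$, not merely a universal fraction of it) and defines $U\subset\alpha$ to be the set of points $p$ whose lifted $\delta$--ball meets only the one lift of $\bd R$ through $\tilde p$; each arc of $\alpha\setminus U$ then sits in a $\delta$--thin quadrilateral by construction. The step you flag as the main obstacle---bounding the number of quadrilaterals by $K$ and locating an $L$--wide one---falls out of a bare area count: each component of $U$ supports an embedded half-disk of area $2\pi\sinh^2(\delta/2)$ in $R$, so $U$ has at most $K$ components, and each component of $U$ has $\delta$--collar of area at least $\delta$ times its length, so has length at most $L$; hence $\alpha\setminus U$ has at most $K$ arcs of total length exceeding $\ell_X(\alpha)-KL>\ell_X(\alpha)/2$, and pigeonhole gives an arc of length exceeding $\ell_X(\alpha)/(2K)>L$. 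Your spine argument would probably yield a sharper count (order $|\chi(\s)|$ rather than $K$), but at the price of the edge-by-edge level-set analysis and the degenerate configurations you list, none of which the paper's argument requires.
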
 
    
  \begin{proof} 
    
    Let $\tX \cong \HH$ be the universal cover of $X$. Let $\alpha$ be a
    curve in $\bd R$ with $\ell_X(\alpha) = \ell_X(\bd R)$, then let $U$ be
    the set of points $p \in \alpha$ such that for any lift $\tp$ of $p$ to
    $\tX$, $B_\delta(\tp)$, the ball of radius $\delta$ centered at $\tp$,
    meets exactly one component of the lifts of $\bd R$, i.e.\ the one that
    contains $\tp$. In this case, $B_\delta(p) \cap R$ is an embedded
    half-disk in $R$. By definition, for each arc $\omega$ in $\alpha \ssm
    U$, the $\delta$--collar neighborhood of $\omega$ is either a cylinder
    in $R$ and hence it is a $\delta$--thin quadrilateral in $R$ where the
    top and bottom edges are the same, or it contains an embedded
    $\delta$--thin quadrilateral in $R$. Moreover, the width of the
    quadrilateral containing $\omega$ is at least $\ell_X(\omega)$. Now
    consider the collection $\calQ$ of $\delta$--thin quadrilaterals
    containing an arc of $\alpha \ssm U$. 
      
    First note that the set $U$ has at most $K$ connected components. This
    follows from the fact that each connected component of $U$ has a
    $\delta$--collar neighborhood inside $R$ that contains at least one
    half-disk, and thus it contributes at least $2\pi \sinh^2(\delta/2)$
    (the area of half-disk) to the area of $R$. But the total area of $R$
    is at most $2\pi |\chi(S)|$ (area of the hyperbolic metric $X$), so $U$
    has at most $K$ components. This also implies that $\calQ$ consists of
    at most $K$ quadrilaterals. Moreover, using a similar area argument, we
    see that each connected component of $U$ has length at most $L$;
    because the $\delta$--collar neighborhood of $\omega\subset U$ in $R$
    has area at least $\ell_X(\omega) \delta$ which is bounded by
    $2\pi|\chi(S)|$.  Putting these observations and the assumption of the
    lemma about the length of $\alpha$ together  we get the lower bound
    \[
    \ell_X(\alpha) - KL >  \ell_X(\alpha)/2,
    \] 
    for the total length of components of $\alpha\ssm U$. Since $U$ has at
    most $K$ components, so does $\alpha\ssm U$. With each component of $U$
    having length at most $L$, at least one component of $\alpha \ssm U$
    has length at least $\ell_X(\alpha)/2K > L$. This implies that there is
    at least one $\delta$--thin and $L$--wide quadrilateral in $\calQ$.
    Finally, since the total length of all the arcs in $\alpha \ssm U$ is
    at least $\ell_X(\alpha)/2$ and each quadrilateral in $\calQ$ contains
    at most $2$ arcs in $\alpha \ssm U$, the total width of all the
    quadrilaterals is at least $\ell_X(\alpha)/4$. This finishes the proof
    of the lemma. \qedhere   
  \end{proof} 
    
  We now prove a proposition that allows us to estimate the relative
  twisting of a curve in a subsurface $R$ of $S$ with respect to a
  hyperbolic metric on $S$. 
  
  \begin{proposition} \label{Prop:RelTwist} 
    
    Let $X$ be a hyperbolic metric on \s. Let $R\subseteq S$ be a
    non-annular subsurface and $\gamma$ a curve intersecting $R$. Then for
    any component $\tau \subseteq \gamma \cap R$, we have
     \[ d_R(\gamma,X) \lmul \Log
    \frac{\ell_X(\tau)}{\ell_X(\bd R)},\] 
    where the error in the bound above depends only on \s. 
  
  \end{proposition}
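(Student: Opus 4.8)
The plan is to estimate the relative twisting $d_R(\gamma, X)$ by locating a short transverse curve to $\partial R$ in the short marking $\mu_X$ and controlling how $\tau$ winds around $\partial R$. First I would handle the easy case: if $\ell_X(\partial R) \gadd 1$, say $\ell_X(\partial R)$ is bounded below by a definite constant, then there is a uniform bound on the collar width of $\partial R$, and any component $\tau \subseteq \gamma \cap R$ that runs around $\partial R$ many times must have length growing linearly with the number of wraps; comparing with $d_R(\gamma, X) \ladd d_{\partial R}(\gamma, X) + O(1)$ (using the triangle inequality \eqref{eq:triineq} and the fact that $\pi_R(\gamma)$ is close to $\pi_R(\partial R \text{-transversal})$ in $\calC(R)$ when $\tau$ wraps a lot) and noting $\ell_X(\partial R) \gmul 1$, the right-hand side $\Log(\ell_X(\tau)/\ell_X(\partial R))$ dominates. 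More precisely, the component $\tau$ projects into $\calC(R)$ and, by a surgery/Euler-characteristic argument as in \eqref{Eqn:di}, $d_R(\gamma, X) \lmul \Log \I_R(\gamma, \mu_X)$, so it suffices to bound $\I_R(\gamma, \mu_X)$ in terms of $\ell_X(\tau)/\ell_X(\partial R)$.

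The substantive case is when $\ell_X(\partial R)$ is small, so that $\partial R$ has a wide collar of width $\approx w_{\ell_X(\partial R)} \approx \log(1/\ell_X(\partial R))$. Here I would pass to the annular cover of $S$ corresponding to a component of $\partial R$, or equivalently work inside the standard collar $U_X(\partial R)$. Each time $\tau$ crosses this collar it picks up length at least $2w_{\ell_X(\partial R)}$ by \lemref{ConstantM}-type estimates, and the relevant twisting coefficient $d_{\partial R}(\gamma, X)$ counts (coarsely) the number of such crossings after correcting for the collar width — this is exactly the content of \lemref{ConstantA} rescaled to curves of length $\ell_X(\partial R)$ rather than $1$. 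So I would first establish the rescaled version of \lemref{ConstantA}: for a curve $\delta \subseteq \partial R$ with $\ell_X(\delta) = \ep \le \ep_B$, the longest arc of $\tau$ in $U_X(\delta)$ has length $\eadd d_\delta(\gamma, X)\cdot \ep$ up to an additive error $\lmul w_\ep$. Dividing through by $\ell_X(\partial R)$ and taking $\Log$, and using that $d_R(\gamma,X) \ladd d_{\partial R}(\gamma, X) + O(1)$ whenever a subsurface projection coefficient is large (the $d_R$ versus $d_{\partial R}$ comparison for a curve crossing the collar, which is standard from \cite{mm1}), yields the claimed bound.

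The cleanest way to organize this is probably to choose a curve $\beta$ in the base of $\mu_X$ (or its transversal, if $\partial R$ is itself a pants curve of $\mu_X$) realizing the projection $\pi_R(X)$, note $\ell_X(\beta) \ladd \ep_B$, and then estimate $\I_R(\tau, \beta)$. The intersection $\I_R(\tau, \beta)$ is controlled by how much $\tau$ and $\beta$ wind relative to each other around $\partial R$, which in turn is controlled by $\ell_X(\tau)$ and $\ell_X(\beta)$ divided by the collar boundary length $b_{\ell_X(\partial R)} \approx 2$ and the collar width: a geodesic arc of length $\ell$ crossing a collar of a curve of length $\ep$ wraps $O(\ell/\ep)$ times, but contributes only $O(\ell / w_\ep)$ to intersection with a bounded-length curve. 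Feeding $\I_R(\tau, \beta) \lmul \ell_X(\tau)/\ell_X(\partial R)$ (the worst case, when $\partial R$ is short so $w_{\ell_X(\partial R)} \gadd 1$ and $b_{\ell_X(\partial R)} \eadd 2$) into \eqref{Eqn:di} gives $d_R(\gamma, X) \lmul \Log(\ell_X(\tau)/\ell_X(\partial R))$.

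**Main obstacle.** The hard part will be getting the estimate to be uniform across the transition between ``$\partial R$ short'' (wide collar, $w_\ep \to \infty$) and ``$\partial R$ of bounded length'' (collar width and boundary length both $O(1)$), and in particular making sure the ratio $\ell_X(\tau)/\ell_X(\partial R)$ — rather than, say, $\ell_X(\tau)$ alone or $\ell_X(\tau) \cdot w_{\ell_X(\partial R)}$ — is the right quantity inside the $\Log$. The key point to verify carefully is that when $\tau$ crosses the collar $k$ times, $\ell_X(\tau) \gmul k \cdot \ell_X(\partial R)$ is the correct lower bound (not $k \cdot w_\ep$, which would be weaker in the short case), because an arc can stay close to $\partial R$ and wind around it cheaply — each full wrap costs only about $\ell_X(\partial R)$ in length. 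Reconciling this with the fact that $d_{\partial R}$, and hence $d_R$, only sees $O(\ell_X(\tau)/w_{\ell_X(\partial R)})$ wraps worth of twisting (because transverse curves to a short curve are long) is what makes the $\Log$ of the ratio come out correctly; I'd want to isolate this in a clean sublemma comparing wrapping number, arc length, and the twisting coefficient, probably by direct computation in the annular cover $\HH/\langle \phi \rangle$ where $\phi$ is the covering translation of length $\ell_X(\partial R)$.
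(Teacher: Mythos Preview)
Your plan has a genuine gap: it is organised around the annular coefficient $d_{\partial R}$ and the collar of $\partial R$, but the proposition concerns the \emph{non-annular} projection $d_R$, and these are essentially unrelated. An arc $\tau \subset R$ that wraps many times around $\partial R$ can still be distance $1$ in $\calC(R)$ from a curve that a short marking curve is also close to; conversely $\tau$ can have $d_{\partial R}(\gamma,X)$ bounded while $d_R(\gamma,X)$ is arbitrarily large. So the inequality $d_R(\gamma,X)\ladd d_{\partial R}(\gamma,X)+O(1)$ you invoke is false, and the whole reduction to ``count wraps around $\partial R$'' collapses.

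More concretely, your case split is inverted. When $\ell_X(\partial R)$ is \emph{small} the estimate is easy for the reason you give in your third paragraph: for an $\ep_B$--short curve $\beta$ one has $\I_R(\tau,\beta)\le \ell_X(\tau)/(2w_{\ep_B})+1$, and since $\ell_X(\partial R)$ is bounded above this already yields $\I_R(\tau,\beta)\lmul \ell_X(\tau)/\ell_X(\partial R)$ and hence the claim via \eqref{Eqn:di}. The hard case is when $\ell_X(\partial R)$ is \emph{large}. There the bound $\I_R(\tau,\beta)\lmul \ell_X(\tau)/\ell_X(\partial R)$ you want simply fails: a short $\beta$ can sit well inside $R$, and crossings of $\tau$ with $\beta$ cost only $2w_{\ep_B}$ each, giving no factor of $\ell_X(\partial R)$ in the denominator. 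The paper handles this case by a completely different mechanism that your plan never touches: when $\partial R$ is long it must nearly self-intersect, producing a bounded collection of $\delta$--thin quadrilaterals in $R$ of total width $\gmul \ell_X(\partial R)$ (\lemref{Quad}). A short curve cannot cross a wide quadrilateral from side to side, so its projection to $\calC(R)$ is within distance $1$ of a side edge; then one bounds $d_R(\gamma,X)$ by $\Log n$ where $n$ is the minimal number of side-to-side crossings of $\tau$ through the quadrilaterals, and the width estimate gives $n\lmul \ell_X(\tau)/\ell_X(\partial R)$. This quadrilateral argument is the missing idea.
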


  \begin{proof}
    
    Let $\beta$ be an $\ep_B$--short curve intersecting $R$, and let $\tau
    \subseteq \gamma \cap R$ and $\tau' \subseteq\beta \cap R$ be curves or
    arcs with endpoints on the boundary of $R$.

    For any $\delta \in (0,1)$, recall the constants $K$ and $L$ from Lemma
    \ref{Lem:Quad}. Now fix a small $\delta$ so that $8\delta^2 \le 2\pi
    |\chi(\s)|$ and $L -2\delta > \ep_B$. We consider the following two
    cases.
  
    First suppose that $\ell_X(\bd R) \le 2KL$. Recall that $w_{\ep_B}$
    denotes the width of the standard collar neighborhood about a curve of
    length $\ep_B$. Since $\beta$ is $\ep_B$--short, the distance between
    any two intersection points with $\beta$ along $\tau$ is at least
    $2w_{\ep_B}$, so we immediately have that
    \[ \I_R(\tau,\tau') \le \frac{\ell_X(\tau)}{2w_{\ep_B}} +1 \le
    \frac{KL}{w_{\ep_B}}
    \frac{\ell_X(\tau)}{\ell_X(\bd R) }+1. 
    \] 
    If the quantity $ \frac{KL}{w_{\ep_B}} \frac{\ell_X(\tau)}{\ell_X(\bd
    R) }$ is less than 1, then $\tau$ and $\tau'$ intersect at most once.
    If it is bigger than 1, we can write 
    \[\I_R(\tau,\tau') \leq 2\frac{KL}{w_{\ep_B}}
    \frac{\ell_X(\tau)}{\ell_X(\bd R) }\] The constant
    $2\frac{KL}{w_{\ep_B}}$ is independent of $X$ and the inequality
    (\ref{Eqn:di}) completes the proof in the case of $\ell_X(\bd R)\leq
    2KL$.
  
    Now assume that $\ell_X(\bd R) > 2KL$. Let $Q_1$,\ldots,$Q_m$, $m \le
    K$, be the collection of $\delta$--thin quadrilaterals in $R$ as
    guaranteed by Lemma~\ref{Lem:Quad}. Set $w_i$ to be the width of $Q_i$
    and assume that $w_1 > L$. Note that the side edges of all the $Q_i$'s
    are pairwise disjoint. Moreover, any geodesic that crosses $Q_i$ from
    side to side must be at least $w_i-2\delta$ long. Thus $\tau'$, which
    is an arc of an $\ep_B$--short curve, cannot cross $Q_1$ from side to
    side, so $d_R(\tau',\eta_1) = 1,$ where $\eta_1$ is a side edge of
    $Q_1$ missed by $\tau'$. Now let $n_i$ be the number of times that
    $\tau$ crosses $Q_i$ from side to side  for $i=1,\ldots,m$, and let
    $n=\min_{i=1,\ldots, m} n_i$. Then observe that 
    \begin{equation}\label{eq:dRlogn}
    d_R(\gamma,X) \lmul \Log n,
    \end{equation}
    holds. To see this, note that if $n=0$, then $d_R(\gamma,X)\leq 1$ and
    we already have the above inequality. So assume that $n\geq 1$  and let
    $\eta$ be the side edge that $\I_R(\tau,\eta) = n$, then by the
    triangle inequality \ref{eq:triineq} and \ref{Eqn:di} we have
    \begin{align*}
      d_R(\gamma,X) 
      & \ladd d_R(\tau,\eta) + d_R(\eta, \eta_1) +
      d_R(\eta_1,\tau') \\
      & \le d_R(\tau,\eta) + 2 \\ 
      & \lmul \Log \I_R(\tau,\eta) = \Log n, 
    \end{align*}
    again giving us the inequality (\ref{eq:dRlogn}). 
    
    We know that $\tau$ crosses each $Q_i$ from side to side at least $n$
    times, and picks up a length of at least $w_i-2\delta$ each time.
    Moreover, by Lemma \ref{Lem:Quad} we have that $\sum_{i=1}^m w_i >
    \ell_X(\bd R)/4$, so 
    \[ 
      n \le \frac{\ell_X(\tau)}{\ell_X(\bd R)/4 - 2m \delta}
      \le \frac{\ell_X(\tau)}{\ell_X(\bd R)/4 - 2K \delta}.
    \] 
    Also by our assumption, $8\delta^2 \le 2\pi|\chi(\s)|$ which implies
    that $2K\delta \le  \frac{1}{4} KL$, and hence by the lower bound for
    the length of $\bd R$ we have
    \[ \frac{\ell_X(\bd R)}{4} - 2K\delta \ge
    \frac{\ell_X(\bd R)}{8}. \] From the above two inequalities we deduce
    that $\Log n \lmul \Log \frac{\ell_X(\tau)}{\ell_X(\bd R)}$ which is
    the desired inequality. \qedhere
    
  \end{proof}

  \begin{lemma} \label{Lem:Collar2}

    Let $\lambda$ and $\lambda'$ be a pair of  disjoint non-asymptotic
    geodesics in $\HH$. Let $f \colon \HH\to \HH$ be a $K$--Lipschitz
    map, $K \ge 1$, such that $f(\lambda)$ and $f(\lambda')$ are geodesics,
    and $f$ stretches distances along $\lambda$ by $K$. Let $q \in
    f(\lambda)$ and $q' \in f(\lambda')$ be the endpoints of the common
    perpendicular between $f(\lambda)$ and $f(\lambda')$. Then for any
    points $x, y \in \lambda$, if $d_\HH(f(x),q) \ge d_\HH(f(y),q)$, then
    for any $x' \in \lambda'$, we have \[ \sinh d_\HH(q,q') \cosh
    \frac{K d_\HH(x,y)}{2} \le \sinh K d_\HH(x,x').\]

  \end{lemma}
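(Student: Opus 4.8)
The plan is to isolate all of the geometry into one classical distance identity in $\HH$ and then feed in the two hypotheses on $f$. Write $d := d_\HH(q,q')$ for the length of the common perpendicular between the disjoint geodesics $f(\lambda)$ and $f(\lambda')$ (its existence, i.e.\ that these two geodesics are disjoint and non-asymptotic, is implicit in the statement). The fact I will establish first is: for every point $P$ on the geodesic $f(\lambda)$,
\[
\sinh d_\HH\big(P,\, f(\lambda')\big) \;=\; \cosh\big(d_\HH(P,q)\big)\,\sinh d,
\]
where $d_\HH(P,f(\lambda'))$ denotes the distance from $P$ to the geodesic $f(\lambda')$. This is standard: drop the perpendicular from $P$ to $f(\lambda')$ and apply the Lambert quadrilateral relations to the quadrilateral with right angles at $q'$, at $q$, and at the foot of this perpendicular; alternatively, place $f(\lambda')$ on the imaginary axis in the upper half-plane model, so that $f(\lambda)$ becomes a Euclidean semicircle and $\sinh d_\HH(P,f(\lambda')) = |\cot \arg P|$, and compute directly. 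I will include a short derivation.

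Next I apply this with $P = f(x)$. Since $f(x') \in f(\lambda')$, we have $d_\HH(f(x),f(x')) \ge d_\HH(f(x),f(\lambda'))$, so the identity gives
\[
\sinh d_\HH\big(f(x),f(x')\big) \;\ge\; \cosh\big(d_\HH(f(x),q)\big)\,\sinh d .
\]
To lower-bound $d_\HH(f(x),q)$, note that $q$, $f(x)$, $f(y)$ all lie on the geodesic $f(\lambda)$; parametrizing $f(\lambda)$ by arc length and giving these points signed coordinates $0$, $t_x$, $t_y$, the hypothesis $d_\HH(f(x),q) \ge d_\HH(f(y),q)$ reads $|t_x| \ge |t_y|$, so by the triangle inequality
\[
d_\HH\big(f(x),f(y)\big) \;=\; |t_x - t_y| \;\le\; |t_x| + |t_y| \;\le\; 2|t_x| \;=\; 2\,d_\HH(f(x),q).
\]
Since $f$ stretches distances along $\lambda$ by $K$, the left-hand side equals $K\, d_\HH(x,y)$, hence $d_\HH(f(x),q) \ge \tfrac12 K\, d_\HH(x,y)$. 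As $\cosh$ is increasing on $[0,\infty)$, this upgrades the previous display to
\[
\sinh d_\HH\big(f(x),f(x')\big) \;\ge\; \cosh\!\Big(\tfrac{K\, d_\HH(x,y)}{2}\Big)\,\sinh d .
\]

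Finally, the $K$-Lipschitz hypothesis gives $d_\HH(f(x),f(x')) \le K\, d_\HH(x,x')$, so $\sinh d_\HH(f(x),f(x')) \le \sinh\!\big(K\, d_\HH(x,x')\big)$; combining this with the previous inequality and recalling $d = d_\HH(q,q')$ yields exactly
\[
\sinh d_\HH(q,q')\, \cosh\!\Big(\tfrac{K\, d_\HH(x,y)}{2}\Big) \;\le\; \sinh\!\big(K\, d_\HH(x,x')\big).
\]
The only genuine obstacle is the distance identity in the first step; everything afterward is the triangle inequality together with monotonicity of $\sinh$ and $\cosh$. If one prefers to avoid Lambert-quadrilateral bookkeeping, the upper half-plane computation (with $f(\lambda')$ the imaginary axis and $f(\lambda)$ a suitable Euclidean semicircle, so that the common perpendicular is an arc of the unit circle) makes the identity completely explicit after a one-line trigonometric simplification.
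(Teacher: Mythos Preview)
Your proof is correct and follows essentially the same route as the paper: the Lambert quadrilateral identity $\sinh d_\HH(q,q')\cosh d_\HH(f(x),q)=\sinh d_\HH(f(x),f(\lambda'))$, the bound $d_\HH(f(x),q)\ge \tfrac{1}{2}Kd_\HH(x,y)$ from the stretching hypothesis, and the $K$--Lipschitz bound $d_\HH(f(x),f(x'))\le K d_\HH(x,x')$. The only cosmetic difference is that the paper reduces without loss of generality to the case where $f(x')$ is the foot of the perpendicular from $f(x)$ to $f(\lambda')$, whereas you state the identity for the distance to the line and then invoke the obvious inequality $d_\HH(f(x),f(x'))\ge d_\HH(f(x),f(\lambda'))$.
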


  \begin{proof}
    
    Since $d_\HH(f(x),q) \ge d_\HH(f(y),q)$ and $f$ stretches distances
    along $\lambda$ by $K$, we have
    \begin{align} \label{eq2}
       d_\HH(f(x),q) \ge \frac{d_\HH(f(x),f(y))}{2} = \frac{K d_\HH(x,y)}{2}.
    \end{align}
    Also it suffices to assume that
    \[d_\HH(f(x),f(x'))=d_\HH(f(x),f(\lambda'))\] and hence that the four
    points $f(x), q, q'$ and $f(x')$ form a Lambert quadrilateral (a
    quadrilateral with three right angles). We then have \begin{align}
      \label{eq1}
      \sinh d_\HH(q,q') \cosh d_\HH(f(x),q) = \sinh  d_\HH(f(x),f(x')). 
    \end{align}
    Using the fact that $f$ is $K$--Lipschitz, we obtain 
    \[  d_\HH(f(x),f(x')) \le K d_\HH(x,x'). \]
    The result now follows by plugging (\ref{eq2}) and the above into the
    left and right sides of $(\ref{eq1})$. \qedhere
    
    \end{proof}

  \begin{lemma} \label{Lem:Collar1}

    Let $\gamma$ and $\lambda$ be a pair of intersecting geodesics in
    $\HH$. Given $\ep > 0$, let $U(\gamma)$ be the $w_\ep$--regular
    neighborhood of $\gamma$ and let $L$ be the length of the arc $\lambda
    \cap U(\gamma)$. Let $\phi$ be a hyperbolic isometry with axis $\gamma$
    and translation length $\ep$. Let then $\lambda' = \phi(\lambda)$ and
    $v = d_\HH(\lambda,\lambda')$. Then we have
  
    \[ \sinh (L/2) = \frac{1}{\sinh (v/2)}. \] 

  \end{lemma}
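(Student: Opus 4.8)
The plan is to reduce the statement to a single explicit hyperbolic trigonometry computation by choosing convenient coordinates. First I would set up the geometry concretely: let $\gamma$ be a geodesic in $\HH$, let $\phi$ be the hyperbolic isometry with axis $\gamma$ and translation length $\ep$, and recall that the standard collar $U(\gamma)$ of width $w_\ep$ has the property, from the Collar Lemma and the definition \eqref{width collar}, that $\sinh w_\ep = 1/\sinh(\ep/2)$. The key geometric observation is that $\lambda'=\phi(\lambda)$ is the reflection of $\lambda$ across a suitable geodesic: since $\phi$ translates along $\gamma$ by $\ep$, we can write $\phi = \sigma_2 \circ \sigma_1$ where $\sigma_1$ is reflection in the geodesic $\delta_1$ perpendicular to $\gamma$ through the point where $\lambda$ is closest to $\gamma$, and $\sigma_2$ is reflection in the perpendicular $\delta_2$ at distance $\ep/2$ from $\delta_1$ along $\gamma$. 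Then $\sigma_1(\lambda)=\lambda$ (because $\lambda$ meets $\gamma$ and we've arranged $\delta_1$ through the intersection point, so $\sigma_1$ fixes that point and the perpendicularity makes $\lambda$ invariant only if $\lambda\perp\gamma$)—so instead I would simply pick $\delta_1$ so that $\lambda$ is symmetric, or more robustly, drop the symmetry and argue directly with the common perpendicular.

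The cleaner route: let $p$ be the midpoint of the arc $\lambda\cap U(\gamma)$, which by symmetry of the collar about $\gamma$ is the point of $\lambda$ lying on $\gamma$ (the intersection point $\lambda\cap\gamma$). Let $q$ be the point on $\gamma$ at distance $\ep/2$ from $p$. Claim: the geodesic $m$ through $q$ perpendicular to $\gamma$ is the common perpendicular bisector relating $\lambda$ and $\lambda'$; more precisely, $\lambda' = \sigma_m(\sigma_{m_0}(\lambda))$ where $m_0$ is the perpendicular to $\gamma$ at $p$... Actually the decisive fact is just that $\phi$ sends $\lambda$ to $\lambda'$, and $d_\HH(\lambda,\lambda')=v$ is realized by a common perpendicular. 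I would locate the foot of this common perpendicular and relate it to the collar. Concretely, the boundary of $U(\gamma)$ consists of two equidistant curves at distance $w_\ep$ from $\gamma$; the endpoints of the arc $\lambda\cap U(\gamma)$ lie on these, and by the right-triangle relation in $\HH$, if $L$ is the length of this arc and $p=\lambda\cap\gamma$ is its midpoint, then each half-arc has length $L/2$, hits $\gamma$ at angle $\theta$ with $\sinh w_\ep = \sinh(L/2)\sin\theta$... hmm, this introduces the angle.

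Let me instead use the angle of intersection $\theta$ between $\lambda$ and $\gamma$ as the bridge. In the right-angled triangle with vertices $p$, the foot on $\partial U(\gamma)$, and the nearest point of $\gamma$ to that endpoint, one has $\tanh w_\ep = \tanh(L/2)\cos\theta$ or the Lambert-quadrilateral relation $\sinh(L/2)\sin\theta = \sinh w_\ep$ (I would verify the exact form, treating $U(\gamma)$ as a Saccheri region and using the standard formula for the distance of a point on $\lambda$ from $\gamma$). Separately, $v = d_\HH(\lambda,\lambda') = d_\HH(\lambda,\phi\lambda)$ can be computed from the translation length $\ep$ and the angle $\theta$: the common perpendicular to $\lambda$ and $\phi\lambda$, together with arcs of $\lambda$, $\gamma$, $\phi\lambda$, gives a pentagon, and the standard formula for two geodesics both crossing a common geodesic $\gamma$ at the same angle $\theta$ with feet $\ep$ apart yields $\sinh(v/2) = \sin\theta \cdot \sinh(\ep/2)$ — or its reciprocal — via a right-angled triangle after cutting the symmetric configuration in half by the perpendicular to $\gamma$ at the midpoint $q$. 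Combining $\sinh(L/2)\sin\theta = \sinh w_\ep = 1/\sinh(\ep/2)$ with $\sinh(v/2) = \sin\theta\,\sinh(\ep/2)$ gives $\sinh(L/2) = \sinh w_\ep/\sin\theta = \dfrac{1}{\sinh(\ep/2)\sin\theta} = \dfrac{1}{\sinh(v/2)}$, which is exactly the claim.

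The main obstacle will be getting the two trigonometric identities exactly right — in particular pinning down which hyperbolic trig formula governs the distance $v$ between $\lambda$ and $\phi(\lambda)$ as a function of $\ep$ and $\theta$, and confirming that the configuration is genuinely symmetric under reflection in the perpendicular to $\gamma$ at the midpoint $q$ (so that $v/2$ rather than $v$ appears, matching the $L/2$ on the other side). Once the symmetry is established, each half is an ordinary Lambert quadrilateral or right-angled triangle and the identities are standard (e.g.\ from the reference \cite{buser}), so the angle $\theta$ cancels and the stated clean relation drops out. I would double-check the degenerate/limiting cases ($\lambda\perp\gamma$, i.e.\ $\theta=\pi/2$, and $\theta\to 0$) to make sure signs and the $\sinh$ vs.\ $\tanh$ choices are consistent.
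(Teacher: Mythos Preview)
Your proposal is correct and, once you settle on the final route, it is essentially the same argument as the paper's. The paper also works with the two right triangles at the intersection point $p=\lambda\cap\gamma$: one with legs $\ep/2$ along $\gamma$ and $v/2$ along the common perpendicular, the other with hypotenuse $L/2$ along $\lambda$ and leg $w_\ep$ perpendicular to $\gamma$; instead of naming the shared angle $\theta$ explicitly, the paper simply notes $\angle opq=\angle xpy$ and writes the single proportion $\sinh(\ep/2)/\sinh(v/2)=\sinh(L/2)/\sinh(w_\ep)$, which combined with $\sinh w_\ep=1/\sinh(\ep/2)$ gives the result. Your two identities $\sinh(L/2)\sin\theta=\sinh w_\ep$ and $\sinh(v/2)=\sin\theta\,\sinh(\ep/2)$ are exactly the sine rules in those same two triangles, so the angle $\theta$ is just the bridge variable that the paper leaves implicit.
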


  \begin{proof}

    We will use the Poincar\'e disc model for $\HH$. See Figure
    \ref{fig:Collar} for this proof. Let $p$ be the point of intersection
    between $\gamma$ and $\lambda$, and let $p' = \phi(p)$. Let $q \in
    \lambda$ and $q' \in \lambda'$ be the points such that the arc $[q,q']$
    in $\HH$ is perpendicular to $\lambda$ and $\lambda'$. Without a loss
    of generality, assume the midpoint of $[q,q']$ is the origin $o$ of
    $\HH$. Any isometry taking $\lambda$ to $\lambda'$ must have axis
    passing through $o$ and thus $o$ is also the midpoint of $[p,p']$. Let
    $x$ be the point of intersection between $\lambda$ and a boundary
    component of $U(\gamma)$. Rotation by an angle of $\pi$ about $p$
    preserves $U(\gamma)$ and $\lambda$, and sends $x$ to the other
    intersection of $\lambda$ and $\partial U(\gamma)$. We than have that
    $d_\HH(x,p)= L/2$, since Let $y \in \gamma$ be the foot of the
    perpendicular from $x$ to $\gamma$. The triangles $\triangle(opq)$ and
    $\triangle(xpy)$ are right triangles with hypotenuse $[o,p]$ and
    $[x,p]$ respectively, and $\angle opq = \angle xpy$. Thus, by
    hyperbolic trigonometry of right triangles, we have
    \[ \frac {\sinh d_\HH(o,p)}{\sinh
    d_\HH(o,q)} = \frac{\sinh d_\HH(x,p)}{\sinh d_\HH(x,y)}. \] The formula
    follows since 
    \[ d_\HH(o,p) = \ep/2, \quad d_\HH(o,q) = v/2, \quad d_\HH(x,y) =
    w_\ep, \quad d_\HH(x,p) = L/2. \qedhere \]

  \end{proof}

  \begin{figure}[htp]
    \begin{center}
      \includegraphics{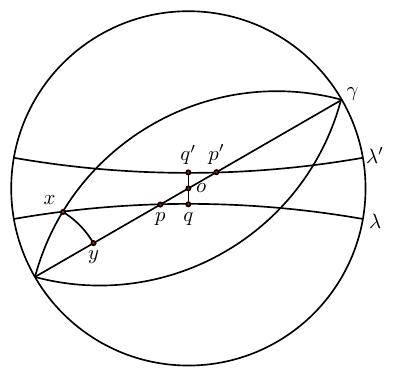}
    \end{center}
    \caption{Figure for the proof of Lemma \ref{Lem:Collar1}.}\label{fig:Collar}

  \end{figure}

  \begin{proposition}\label{Prop:CollarGrowth}
    
    There exists $\ep_0 >0$ such that the following statement holds. Let
    $\g : I \to \T(\s)$ be a Thurston geodesic. Suppose a curve $\gamma$
    intersects $\lambda_\g$ and $\ell_s(\gamma) \leq \ep_0$ for some $s\in
    I$. Then for all $t>s$ with $\ell_t(\gamma) \ge 1$, we have:
    \begin{align*}\label{eq:collargrowth}
      \calL_t \geq \frac{1}{2} e^{t-s}\calL_s, 
    \end{align*}
    where $\calL_t$ is the $X_t$--length of longest arc of $\lambda_\g$ in the
    standard collar of $\gamma$. 

  \end{proposition}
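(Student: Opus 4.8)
The plan is to pass to the universal cover and exploit that an optimal map between $X_s$ and $X_t$ stretches the leaves of $\lambda_\g$ by exactly $e^{t-s}$; this turns the two collar lengths $\calL_s$ and $\calL_t$ into perpendicular distances between a leaf of $\lambda_\g$ and its $\gamma$--translate via \lemref{Collar1}, and those distances can then be compared using \lemref{Collar2}.

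\medskip

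Fix an optimal map $f\colon X_s\to X_t$ and put $K=e^{t-s}=e^{\dth(X_s,X_t)}$. Lift it to $\tilde f\colon\HH\to\HH$, which is $K$--Lipschitz (and bi--Lipschitz, hence extends to $\partial\HH$), carries the full preimage of $\lambda_\g$ to itself, sends each leaf to a geodesic of the $X_t$--structure, and stretches distances along each leaf by exactly $K$ (\thmref{Thurston}). Write $\ep=\ell_s(\gamma)\le\ep_0$ and $\ep'=\ell_t(\gamma)\ge 1$. Since $\gamma$ meets $\lambda_\g$, choose a lift $\tgamma$ of the $X_s$--geodesic of $\gamma$ and a leaf $\tlambda$ of the lift of $\lambda_\g$ whose arc $\sigma=\tlambda\cap U_{X_s}(\gamma)$ is longest; such an arc crosses $\tgamma$, say at $p$, has length $\calL_s$, and (by the symmetry in the proof of \lemref{Collar1}) $p$ is its midpoint in arc length. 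Let $\phi$ be the deck transformation for $\gamma$ — axis $\tgamma$, translation length $\ep$ — and set $\mu=\phi(\tlambda)$, another leaf of the lift of $\lambda_\g$; since $\lambda_\g$ is a lamination $\tlambda$ and $\mu$ are disjoint, and since $\tlambda$ crosses $\tgamma$ they are non--asymptotic. Then \lemref{Collar1} at time $s$ gives $\sinh(\calL_s/2)\sinh(v_s/2)=1$ with $v_s=d_\HH(\tlambda,\mu)\le d_\HH(p,\phi(p))=\ep$. Conjugating by $\tilde f$, the leaves $\tilde f(\tlambda)$ and $\tilde f(\mu)=\phi'(\tilde f(\tlambda))$ (where $\phi'=\tilde f\phi\tilde f^{-1}$ is the $\gamma$--deck transformation for $X_t$, with axis the $X_t$--geodesic $\tgamma'$ and translation length $\ep'$) are again disjoint and non--asymptotic, so \lemref{Collar1} at time $t$ gives $\sinh(\calL_t^{*}/2)\sinh(v_t/2)=1$, where $v_t=d_\HH(\tilde f(\tlambda),\phi'\tilde f(\tlambda))$ and $\calL_t^{*}$ is the length of $\tilde f(\tlambda)\cap U_{X_t}(\gamma)$. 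As $\calL_t\ge\calL_t^{*}$, it suffices to prove $\sinh(v_t/2)\le 1/\sinh(K\calL_s/4)$, i.e.\ that $v_t$ is exponentially small in $K\calL_s$.

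\medskip

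To bound $v_t$, apply \lemref{Collar2} with $\lambda=\tlambda$, $\lambda'=\mu$ and this $\tilde f$: letting $q\in\tilde f(\tlambda)$, $q'\in\tilde f(\mu)$ be the feet of the common perpendicular, take $x$ to be the foot on $\tlambda$ of the common perpendicular to $\mu$ (so that, with $x'$ its foot on $\mu$, $d_\HH(x,x')=v_s$) and $y\in\tlambda$ with $\tilde f(y)$ the point of $\tilde f(\tlambda)$ at distance $d_\HH(\tilde f(x),q)$ from $q$ on the side opposite $\tilde f(x)$; exact $K$--stretching along $\tlambda$ gives $Kd_\HH(x,y)/2=d_\HH(\tilde f(x),q)$, so
\[
\sinh(v_t)\,\cosh\!\big(d_\HH(\tilde f(x),q)\big)\le\sinh(Kv_s)\le\sinh(K\ep).
\]
The crucial point is then a lower bound $d_\HH(\tilde f(x),q)\gtrsim K\calL_s$ (up to an additive error negligible next to $K\calL_s$, using $\calL_s\ge 2w_\ep\gg\ep$ for $\ep\le\ep_0$). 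From the proof of \lemref{Collar1}, $x$ lies within $\ep$ of $p$ and $q$ lies within $\ep'$ of the crossing point $r=\tilde f(\tlambda)\cap\tgamma'$, so one needs $d_\HH(\tilde f(p),r)\gtrsim K\calL_s$. The arc $\tilde f(\sigma)$ is a geodesic arc of $\tilde f(\tlambda)$ of length $K\calL_s$ whose endpoints lie within $Kw_\ep+O(K\ep)$ of $\tgamma'$ (push the endpoints of $\sigma$, at distance $w_\ep$ from $\tgamma$, through the $K$--Lipschitz $\tilde f$ and control the Hausdorff distance of $\tilde f(\tgamma)$ to $\tgamma'$); by convexity of metric neighbourhoods of a geodesic, $\tilde f(\sigma)$ is therefore trapped in that bounded neighbourhood of $\tgamma'$, which forces $\tilde f(\tlambda)$ to meet $\tgamma'$ at an exponentially small angle and the perpendicular foot $q\approx r$ to sit far out along $\tilde f(\sigma)$, i.e.\ far from its midpoint $\tilde f(p)$. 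Feeding $d_\HH(\tilde f(x),q)\gtrsim K\calL_s$ and $v_s\le\ep$ into the display yields $\sinh(v_t)\le e^{-K\calL_s/2+o(K\calL_s)}$, hence $\sinh(v_t/2)\le1/\sinh(K\calL_s/4)$ once $\ep_0$ is small, and then $\calL_t\ge\calL_t^{*}\ge\tfrac12 e^{t-s}\calL_s$.

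\medskip

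The main obstacle is exactly that last estimate: turning the confinement of $\tilde f(\sigma)$ near $\tgamma'$ into a lower bound of order $K\calL_s$ for $d_\HH(\tilde f(x),q)$, equivalently an upper bound of the right order for the angle at which $\tilde f(\tlambda)$ crosses $\tgamma'$. One must arrange the collar--width, crossing--angle, and Hausdorff--distance estimates so that the exponential gain from exact $K$--stretching of $\lambda_\g$ beats the $K$--Lipschitz losses; this is where the hypotheses $\gamma\cap\lambda_\g\ne\emptyset$, $\ell_s(\gamma)\le\ep_0$, and $\ell_t(\gamma)\ge 1$ are used.
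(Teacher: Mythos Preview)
Your overall architecture---pass to $\HH$, use \lemref{Collar1} to trade $\calL$ for the perpendicular distance $v$ between a leaf and its $\gamma$--translate, then use \lemref{Collar2} to control $v_t$---is exactly the paper's. The gap is in how you feed \lemref{Collar2}.

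You take $x$ to be the foot of the common perpendicular on $\tlambda$ (so $d_\HH(x,x')=v_s\le\ep$) and then manufacture $y$ so that $Kd_\HH(x,y)/2=d_\HH(\tilde f(x),q)$. This forces you to prove $d_\HH(\tilde f(x),q)\gtrsim K\calL_s$, equivalently that the image of the $s$--crossing $\tilde f(p)$ lies at distance of order $K\calL_s$ from the $t$--crossing $r=\tilde f(\tlambda)\cap\tgamma'$. Your confinement argument does not give this: the neighborhood of $\tgamma'$ containing $\tilde f(\sigma)$ has width of order $Kw_\ep$ plus the Hausdorff distance from $\tilde f(\tgamma)$ to $\tgamma'$, and since a priori $\calL_s$ is only $\ge 2w_\ep$, an arc of length $K\calL_s$ inside a tube of width $\sim Kw_\ep$ yields no small-angle conclusion. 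Worse, even a small crossing angle would not force $q$ (hence $r$) to be far from the \emph{midpoint} $\tilde f(p)$ of $\tilde f(\sigma)$; nothing in the setup prevents $\tilde f(p)$ from landing close to $r$. So the ``main obstacle'' you flag is not bookkeeping---it is a genuine missing estimate, and I do not see how to supply it with your choice of $x$.

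The paper sidesteps all of this by a different choice of points in \lemref{Collar2}: take $x,y$ to be the two endpoints of the collar arc $\tlambda\cap U_s$, so $d_\HH(x,y)=\calL_s$ for free, and take $x'=\phi_s(x)\in\lambda'$. Then $x$ and $x'$ lie on the same boundary hypercycle of $U_s$, one $\phi_s$--period apart, so $d_\HH(x,x')\le b_\ep\le b:=\coth(1/2)$, a fixed constant. After possibly swapping $x,y$ to satisfy the hypothesis of \lemref{Collar2}, one gets directly
\[
\sinh d_\HH(q,q')\;\cosh\!\Big(\tfrac{K\calL_s}{2}\Big)\;\le\;\sinh(Kb),
\]
and combining with \lemref{Collar1} (at time $t$) and $\calL_s\ge 2w_\ep>4b$ yields $\calL_t\ge \tfrac12 e^{t-s}\calL_s$. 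The point is that paying the slightly worse bound $d_\HH(x,x')\le b$ (instead of your $v_s\le\ep$) buys you $d_\HH(x,y)=\calL_s$ with no further work, which is exactly what \lemref{Collar2} needs.
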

 
  \begin{proof}
  
    Given $\ep$, recall that $w_\ep$ denotes the width of the standard
    collar neighborhood of a curve of length $\ep$ and  $b_\ep$  the length
    of the boundary of the standard collar neighborhood.  Let  $b =
    \sup_{\ep \in (0,1]} b_\ep = b_1$. Explicitly, by (\ref{eq:len bd
    collar}) we have 
    \[ b= \coth \frac{1}{2}.\] 
    Let $\ep_0 \in(0, 1)$ be sufficiently small so that for any positive
    $\ep \le\ep_0$, we have $w_\ep > 2b$. Such $\ep_0$ exists since the
    function $w_\ep$ is decreasing and goes to $+\infty$ at $0$.

    For any $s \in I$, let $\tgamma_s$ be a lift of $\gamma$ to $\tX_s$.
    Let $\phi_s$ be the hyperbolic isometry with axis $\tgamma_s$ and
    translation length $\ell_s(\gamma)$. 
    
    Now fix $s$ and set $\ep = \ell_s(\gamma)$ for some $\ep\in (0,
    \ep_0)$. Let $U_s$ be the $w_\ep$--regular neighborhood of $\tgamma_s$.
    Let $\lambda$ be a lift of a leaf of $\lambda_\g$ that crosses
    $\tgamma_s$ such that $\calL_s$ is the length of $\lambda \cap U_s$.
    Note that $\calL_s \ge 2w_\ep > 4b$. Let $x,y$ be the endpoints of
    $\lambda \cap U_s$. Set $\lambda' = \phi_s(\lambda)$.  

    Suppose $\ell_t(\gamma) = \eta \ge 1$ for some $t > s$. Let $\tf: \tX_s
    \to \tX_t$ be the lift of an optimal map from $X_s \to X_t$. By
    composing with M\"obius transformation if necessary, we may assume
    $\tf\phi_s = \phi_t\tf$. Let $U$ be the $w_\eta$--regular neighborhood
    of $\tgamma_t$ and let $\calL$ be the length of $\tf(\lambda) \cap U$.
    Note that $\calL_t \ge \calL$, so it's enough to prove the statement of
    the proposition for $\calL$. Let $q \in \tf(\lambda)$ and $q'\in
    \tf(\lambda')$ be the endpoints of the common perpendicular of
    $\tf(\lambda)$ and $\tf(\lambda')$. By Lemma \ref{Lem:Collar1} and
    monotonicity of $\sinh$, 
    \[ \sinh \frac{\calL}{2} \ge \frac{1}{\sinh (d_\HH(q,q'))}. \] We now
    proceed to give an upper bound for $\sinh (d_\HH(q,q'))$. 
    
    Without the loss of generality, we can assume that $\dH(\tf(x),q)\geq
    \dH(\tf(y),q)$. Let $x' = \phi_s(x)$. Since $\ell_s(\gamma) < 1$, the
    length along the boundary component of $U_s$ from $x$ to $x'$ is 
    less than $b$, so $d_\HH(x,x') \le b$. This, together with Lemma
    \ref{Lem:Collar2}, yields
    \[ \sinh d_\HH(q,q') \le \frac{ \sinh e^{t-s}
    d_\HH(x,x')}{\cosh (e^{t-s} \calL_s/2)} \le \frac{ \sinh e^{t-s}b}{\cosh
    (e^{t-s} \calL_s/2)} \le \frac{1}{e^{e^{t-s} \left( \frac{\calL_s}{2} - b
    \right)}}\]
    Since $\calL_s > 4b$ and $\sinh^{-1}x\geq \log{x}$ for $x\geq 1$, we obtain
    \[ \calL \ge 2 \sinh^{-1} e^{e^{t-s} \left( \frac{\calL_s}{2} - b
    \right)} \ge 2e^{t-s} \left(  \frac{\calL_s}{2} - b \right) \ge
    \frac{1}{2}\calL_s e^{t-s}. \qedhere \]

  \end{proof}  

\section{A notion of being horizontal}

  Let $\g: I\to\T(S)$ be a Thurston geodesic and let $\lambda_\g$ be the
  maximally stretched lamination. The main goal of this section is to
  develop a notion of a curve being horizontal along  $\g$ in such a way
  that as soon as $\alpha$ is sufficiently horizontal, it remains
  horizontal and its length grows essentially exponentially. There are two
  stages, weakly horizontal and strongly horizontal. 
  
  This latter notion requires a rather technical definition, but roughly
  speaking, it means that curve $\alpha$ fellow travels $\lambda_\g$ both
  geometrically and topologically for a long time. We will prove that
  strongly horizontal curves stay strongly horizontal and grow
  exponentially in length. Then, a curve $\alpha$ is weakly horizontal if
  it stays parallel with a leaf of $\lambda_\g$ through a long collar
  neighborhood of a very short curve. We will show that a weakly horizontal
  curve quickly becomes strongly horizontal. 

  \subsection{Strongly horizontal}

  We will start with the definition of strongly horizontal. This follows
  closely to the definition of horizontal as introduced in \cite{LRT2}, but
  with an adjustment of the constants involved. The constant $\ep_h \ge
  \ep_B$ in the following definition will be determined in Lemma \ref{Lem:Constants}. 
   
  \begin{definition}[Strongly Horizontal] \label{Def:Shor}
    Let $\g \from I \to \T(S)$ be a Thurston geodesic and denote $X_t =
    \g(t)$. Given $n \in \mathbb N$ and $L>0$, we say a curve $\alpha$ is
    {\em $(n,L)$--horizontal} on $X=X_t$ if there exists an $\ep_h$--short
    curve $\gamma$ on $X$ and a leaf $\lambda$ in $\lambda_\g$ such that
    the following statements hold:  
    \begin{itemize}
      \item[(H1)] In the universal cover $\widetilde X \cong \HH$, there is
        a collection $\{\tgamma_1, \ldots \tgamma_n\}$ of lifts of the
        curve $\gamma$ and a lift $\tlambda$ of $\lambda$ intersecting each
        $\tgamma_i$ at a point $p_i$ on $\tl$ (the $p_i$'s are indexed by
        the order of their appearances along $\tlambda$) such that
        $d_\HH(p_i, p_{i+1}) \ge L$ for $i=1, \ldots, n-1$.
      \item[(H2)] There is a lift $\talpha$ of $\alpha$ such that $\talpha$
        intersects $\tgamma_i$ at a point $q_i\in \HH$ with $\dH(p_i, q_i)
        \le \ep_h$ for each $i=1,\ldots, n$.
    \end{itemize}
    We will call $\gamma$ as above an {\em anchor curve} for $\alpha$, and
    $\talpha$ an {\em$(n,L)$--horizontal lift} of $\alpha$. Moreover, we
    call the segment $[q_1,q_n] \subseteq\talpha$ the \emph{horizontal
    segment} of $\talpha$, and the projection of $[q_1,q_n]$ to $X$ the
    horizontal segment of $\alpha$. 
  \end{definition} 
   
  We recall the following proposition from \cite{LRT2}. 

  \begin{proposition}{\cite[Proposition 4.6]{LRT2}} 
    \label{prop : enhance to horizontal} 
    There is a constant  $L_0>0$ such that the following statement holds.
    Let $X$ be a hyperbolic metric on $S$, $\gamma$ an $\ep_B$--short curve
    on $X$, $\lambda$ is a complete simple geodesic on $X$. Then for any
    $n\in \mathbb N$ and $L\geq L_0$, if there are $n$ lifts
    $\{\tgamma_i\}$ of $\gamma$ and a lift $\tlambda$ of $\lambda$ that
    satisfying condition (H1) of Definition \ref{Def:Shor}, and $\alpha$ is
    a curve on $X$ with a lift $\talpha$ that lies $\ep_B$--close to the
    segment $[p_1,p_n]$ in $\tlambda$, then for all $i=3,\ldots n-2$,
    $\talpha$ intersects $\tgamma_i$ at a point $q_i$ with
    $d_\HH(p_i,q_i)\leq \ep_B$.

  \end{proposition}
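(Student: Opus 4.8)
The plan is to reduce the statement to a purely hyperbolic-plane fact, namely Lemma~\ref{Lem:closer}, applied successively at each index $i$. Recall that (H1) provides $n$ lifts $\tgamma_1,\dots,\tgamma_n$ of $\gamma$ meeting the leaf $\tlambda$ at points $p_1,\dots,p_n$ (in order) with consecutive gaps $d_\HH(p_i,p_{i+1})\ge L$, and that $\talpha$ lies $\ep_B$--close to the subsegment $[p_1,p_n]$ of $\tlambda$. Fix an index $i$ with $3\le i\le n-2$. The point $p_i$ lies between $p_{i-1}$ and $p_{i+1}$ on $\tlambda$; since $L\ge L_0$ we will choose $L_0$ large enough (relative to $\ep_B$) that the hypotheses of Lemma~\ref{Lem:closer} hold with $\alpha$ there replaced by $\talpha$, with $\ep_2 = \ep_B$, $\ep_1$ the half-width of the standard collar of $\gamma$, and the three collinear points $p_{i-1}, p_i, p_{i+1}$. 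Concretely, one needs $d_\HH(p_{i-1},p_i)\ge L$ and $d_\HH(p_i,p_{i+1})\ge L$ to exceed the threshold $L = \log\!\bigl(2\sinh\ep_2/\sinh\ep_1\bigr)$ of that lemma; since $\ep_1, \ep_2$ are fixed once $\gamma$'s length is controlled, this is just a lower bound on $L_0$. The conclusion of Lemma~\ref{Lem:closer} is then $d_\HH(p_i, \talpha)\le \ep_1$, i.e.\ $\talpha$ passes within a collar half-width of $p_i\in\tgamma_i$, which forces $\talpha$ to cross $\tgamma_i$; call $q_i$ that intersection point. A standard estimate inside the collar of $\gamma$ (the geometry of the collar neighborhood, cf.\ the Collar Lemma and the discussion of $w_\ep$) then gives $d_\HH(p_i,q_i)\le \ep_B$ after possibly enlarging $\ep_B$, since both $p_i$ and $q_i$ lie on $\tgamma_i$ close to the same region.

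The one subtlety is that Lemma~\ref{Lem:closer} as stated uses a single fixed geodesic $\alpha$ and three collinear points; here the role of $\alpha$ is played by $\talpha$, which is only $\ep_B$--close to $[p_1,p_n]$ rather than to an infinite geodesic. But $[p_{i-1},p_{i+1}]\subseteq[p_1,p_n]$ when $3\le i\le n-2$ — this is precisely why the index range excludes the two endpoints on each side — so the hypothesis ``$d_\HH(p_{i-1},\talpha)\le\ep_B$'' and ``$d_\HH(p_{i+1},\talpha)\le\ep_B$'' is available, and the conclusion ``$d_\HH(p_i,\talpha)\le\ep_1$'' follows verbatim from the lemma (the lemma only uses that $\talpha$ is a geodesic near the two outer points and near $\lambda$ in between; a segment suffices).

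Since $i$ was arbitrary in $\{3,\dots,n-2\}$, this proves the proposition. I would present it as: cite \lemref{closer} with the appropriate constants, note the index restriction guarantees $p_{i\pm1}\in[p_1,p_n]$, deduce $\talpha$ enters the collar of $\gamma$ at $\tgamma_i$ hence crosses it, and finish with the collar geometry to upgrade $\ep_1$--closeness of the transversal point to $\ep_B$--closeness of $p_i$ and $q_i$. The main obstacle — really the only one — is bookkeeping the constants: one must fix $L_0$ (and possibly re-enlarge $\ep_B$) so that the collar half-width $\ep_1$ of an $\ep_B$--short curve satisfies $\log(2\sinh\ep_B/\sinh\ep_1) \le L_0$, and verify this choice is independent of $X$. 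This is routine since $\ep_1$ and $\ep_B$ are universal.
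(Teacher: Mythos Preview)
The paper does not supply a proof here; the proposition is quoted from \cite{LRT2}. So there is nothing in the paper to compare against, and I will simply evaluate your argument on its own terms.

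Your use of \lemref{closer} is the right tool, but the crossing step has a real gap. From $d_\HH(p_i,\talpha)\le\ep_1$ (with $\ep_1$ the collar half-width) you assert that ``$\talpha$ passes within a collar half-width of $p_i\in\tgamma_i$, which forces $\talpha$ to cross $\tgamma_i$''. This is false in general: the one-sided half-collar $\{x:d_\HH(x,\tgamma_i)\le\ep_1\}$ on a fixed side of $\tgamma_i$ is convex, so a geodesic can enter and leave it without ever crossing $\tgamma_i$. Nearness to a single point on $\tgamma_i$ carries no transversality information. Your explanation of the index range is a symptom that something is missing: requiring only $p_{i\pm1}\in[p_1,p_n]$ would permit $2\le i\le n-1$, not the stated $3\le i\le n-2$.

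The repair is to apply \lemref{closer} at the neighbours $p_{i-1}$ and $p_{i+1}$ instead, using the triples $(p_{i-2},p_{i-1},p_i)$ and $(p_i,p_{i+1},p_{i+2})$; this is precisely what forces $3\le i\le n-2$. Choosing $L_0$ large enough that the resulting $\ep_1$ satisfies $\ep_1<w_{\ep_B}$, you obtain $d_\HH(p_{i\pm1},\talpha)\le\ep_1$. Since $p_{i-1}\in\tgamma_{i-1}$ and the lifted collars of distinct lifts of $\gamma$ are disjoint, $d_\HH(p_{i-1},\tgamma_i)\ge w_{\ep_B}>\ep_1$, so the nearest point of $\talpha$ to $p_{i-1}$ lies strictly on the $p_{i-1}$--side of $\tgamma_i$; likewise for $p_{i+1}$, which is on the opposite side because $\tlambda$ crosses $\tgamma_i$ at $p_i$. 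Hence $\talpha$ has points on both sides of $\tgamma_i$ and must cross it at some $q_i$. The final bound $d_\HH(p_i,q_i)\le\ep_B$ is then obtainable, but not as a ``standard collar estimate'': one route is to use convexity of $d_\HH(\,\cdot\,,\tlambda)$ along $\talpha$ to get $d_\HH(q_i,\tlambda)\le\ep_1$, and then compare the two nearby crossings of $\tgamma_i$ by $\tlambda$ and $\talpha$.
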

  
  Recall that we have fixed once and for all the Bers constant $\ep_B$,
  which fixes the associated collar width $w_{\ep_B}$ and the length
  $b_{\ep_B}$ of the boundary of the collar neighborhood of a curve of
  length $\ep_B$. In the following lemma we set several constants to
  quantify the notions of horizontal curve that we require in the rest of
  the paper.

  \begin{lemma} [Constants] \label{Lem:Constants} 

    Let $A$ be the constant of Lemma \ref{Lem:ConstantA} and $M$ the
    constant from Lemma \ref{Lem:ConstantM}. Then there are positive
    constants $\ep_w$, $\ep_h$, $L_0$ and $n_0\in \mathbb N$ such that the
    following conditions are satisfied simultaneously.
    \begin{enumerate}
      \item $ \ep_w< \min \set{1, 2w_{\ep_B}}$ and satisfies
        \propref{CollarGrowth},\label{ew} 
      \item $\ep_h=\ep_B+4w_{\ep_w}+2M$\label{eh}
      \item $ L_0 = 10 \ep_h$.\label{L01}
      \item $L_0$ satisfies Proposition \ref{prop : enhance to
      horizontal}.
    \label{L02} \item $L_0\geq w_{\ep_B}+6b_{\ep_B}$
        \label{L03} 
      \item $\frac{1}{\ep_w}w_{\ep_w}\geq n_0\lceil L_0+1\rceil+5+A$.
        \label{L04} 
      \item $\sinh(1+w_{\ep_B})\sinh\frac{\ep_B}{2}\leq
        \exp(\frac{1}{6}L_0-1)$. \label{L05}
      \item $n_0\geq 8$.\label{L06}
    \end{enumerate}
  \end{lemma}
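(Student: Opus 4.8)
The plan is to fix the dependency order among the four constants and then realise all eight conditions by driving $\ep_w \to 0$. First I would observe that $n_0$ appears only in conditions~\eqref{L04} and~\eqref{L06}, and in~\eqref{L04} only on the larger side of the inequality; so I would simply set $n_0 := 8$, which disposes of~\eqref{L06}. Next, conditions~\eqref{eh} and~\eqref{L01} are not constraints but \emph{definitions}: they instruct us to put $\ep_h := \ep_B + 4 w_{\ep_w} + 2M$ and $L_0 := 10\,\ep_h$. Since $w_{\ep_w} > 0$ and $M > 0$, this automatically yields $\ep_h \ge \ep_B$, as was promised just before \defref{Shor}. After these reductions, every remaining condition is a condition on the single parameter $\ep_w$ alone (together with the fixed universal constants $\ep_B, w_{\ep_B}, b_{\ep_B}, M, A$ and the thresholds coming from \propref{CollarGrowth} and Proposition~\ref{prop : enhance to horizontal}), and the whole lemma reduces to checking that each of them holds once $\ep_w$ is small.

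The key elementary input is the remark following the Collar Lemma: for $\ep_w \in (0,1)$ one has $w_{\ep_w} \eadd \log(1/\ep_w)$, so $w_{\ep_w} \to \infty$, and hence $\ep_h \to \infty$ and $L_0 = 10\,\ep_h \to \infty$ as $\ep_w \to 0^+$. Given this, conditions~\eqref{ew}, \eqref{L02}, \eqref{L03}, and~\eqref{L05} are each immediate for $\ep_w$ small: \eqref{ew} is literally an upper bound on $\ep_w$ (smaller than $1$, than $2w_{\ep_B}$, and than the constant $\ep_0$ of \propref{CollarGrowth}); \eqref{L02} and~\eqref{L03} ask only that $L_0$ exceed a fixed universal constant; and~\eqref{L05} asks that the fixed constant $\sinh(1+w_{\ep_B})\sinh(\ep_B/2)$ be at most $e^{L_0/6 - 1}$, whose right side is increasing and unbounded in $L_0$. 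All four therefore hold once $\ep_w$ is chosen small enough.

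That leaves condition~\eqref{L04}, which with $n_0 = 8$ reads $\frac{1}{\ep_w} w_{\ep_w} \ge 8 \lceil L_0 + 1 \rceil + 5 + A$. This is the one step with real content, and the point is a growth-rate comparison: since $L_0 = 10(\ep_B + 4 w_{\ep_w} + 2M)$ is affine in $w_{\ep_w}$, the right-hand side is $O(w_{\ep_w}) = O(\log(1/\ep_w))$, whereas the left-hand side $\frac{1}{\ep_w} w_{\ep_w}$ grows like $\frac{1}{\ep_w}\log(1/\ep_w)$ — larger by the unbounded factor $1/\ep_w$. Hence~\eqref{L04} too is satisfied for all sufficiently small $\ep_w$. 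Since there are only finitely many conditions, I would then fix one value of $\ep_w$ small enough to satisfy all of them simultaneously, define $\ep_h$ and $L_0$ by~\eqref{eh} and~\eqref{L01}, and keep $n_0 = 8$; this completes the proof.

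The only genuine obstacle is ensuring the comparison in~\eqref{L04} goes the right way: one must check that $1/\ep_w$ on the left really beats the merely logarithmic growth of $L_0$ (equivalently of $w_{\ep_w}$) on the right, which uses the explicit affine relation between $L_0$, $\ep_h$, and $w_{\ep_w}$ forced by~\eqref{eh}--\eqref{L01}. Everything else is just verifying there is no circularity — i.e.\ that no constraint secretly forces a \emph{lower} bound on $\ep_w$ that would clash with the upper bounds above; inspecting the list, each constraint either bounds $\ep_w$ from above directly or bounds $L_0$ (hence $w_{\ep_w}$, hence $1/\ep_w$) from below, and since $L_0$ increases as $\ep_w$ decreases the latter is again an upper bound on $\ep_w$, so there is no conflict.
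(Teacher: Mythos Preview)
Your proposal is correct and follows essentially the same approach as the paper: fix $n_0$, treat \eqref{eh} and \eqref{L01} as definitions, and drive $\ep_w \to 0$ so that $L_0 \to \infty$; the paper's proof of \eqref{L04} is precisely your growth-rate comparison (it phrases the right-hand side as ``a linear function of $w_{\ep_w}$''). If anything, your write-up is more explicit than the paper's --- you state $n_0 = 8$ outright and include the final circularity check, both of which the paper leaves implicit.
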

    
  \begin{proof}
    
    First note that for all $\ep_w$ sufficiently small (\ref{ew}) holds.
    Then we may define $\ep_h$ and $L_0$ as in (\ref{eh}) and (\ref{L01}).
    Since $L_0$ gets arbitrary large by choice of $\ep_w$ small enough,
    (\ref{L02}) and (\ref{L03}) hold for $\ep_w$ small enough. Also
    (\ref{L04}) holds for $\ep_w$ small enough, because the right hand side
    of the inequality in (\ref{L04}) is a linear function of $w_{\ep_w}$
    and  $\frac{1}{\ep_w}w_{\ep_w}$ is larger than that for all $\ep_w$
    small enough. Finally (\ref{L05}) holds because the right hand side of
    (\ref{L05}) could be made arbitrary large for $\ep_w$ small enough.
    \qedhere
  
  \end{proof}

  From now on we fix constants $n_0, \ep_w, \ep_h$ and $L_0$ that satisfy
  the conditions of \lemref{Constants}. The following theorem provides us
  with a control on the growth of the quantifiers of a strongly-horizontal
  curve moving forward along a Thurston geodesic. It is a generalization of
  Theorem 4.2 in \cite[\S4]{LRT2} to the situation that the curve is inside
  a subsurface and the maximal stretch lamination intersects the
  subsurface.
  
  \begin{theorem} \label{Thm:Horizontal}
    There exists $s_0\geq 0$  such that the following statement holds. Let
    $\g:I\to \T(S)$ be a Thurston geodesic, and let $R\subseteq S$ be a
    non-annular subsurface that  intersects $\lambda_\g$ essentially.
    Suppose that a curve $\alpha \in \calC(R)$ is $(n_s,L_s)$--horizontal
    at $s \in I$ where $n_s\geq n_0$ and $L_s \geq L_0$. Then, 
    \begin{enumerate}
      \item For any $t\geq s+s_0$, the curve $\alpha$ is
        $(n_t,L_t)$--horizontal on $X_t$ where 
        \[ n_t \gmul n_s \quad \text{and} \quad L_t\ge L_s .\]
      \item Moreover, if for some $d>1$ we have $d_R(X_s,X_t)\geq d$, then 
        \[ \log \frac{n_t}{n_s} \gmul d  \quad\text{and}\quad L_t n_t \gmul
        e^{t-s} L_s n_s. \] 
    \end{enumerate}
  \end{theorem}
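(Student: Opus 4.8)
The plan is to track a single $(n_s, L_s)$--horizontal lift $\talpha$ of $\alpha$ forward along the geodesic, using the optimal map and its effect on the maximal stretch lamination $\lambda_\g$. Fix the anchor curve $\gamma$ with $\ell_s(\gamma) \le \ep_h$ and the leaf $\lambda$ with lift $\tlambda$ meeting the lifts $\tgamma_1,\dots,\tgamma_{n_s}$ at points $p_1,\dots,p_{n_s}$ with consecutive distances $\ge L_s$, and $\talpha$ meeting $\tgamma_i$ at $q_i$ with $d_\HH(p_i,q_i)\le \ep_h$. Apply the lift $\tf\colon \tX_s \to \tX_t$ of an optimal map. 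Because $\tf$ stretches $\lambda_\g$ by exactly $e^{t-s}$ and sends geodesics to geodesics (Theorem \ref{Thm:Thurston}(2)), the points $\tf(p_i)$ on $\tf(\tlambda)$ have consecutive distances $\ge e^{t-s}L_s$, which is certainly $\ge L_s$; the issue is that $\tf(\gamma)$ may no longer be short on $X_t$, and $\tf(\tgamma_i)$ may not be geodesic.

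The first main step is a case analysis on the behaviour of $\gamma$ at time $t$, mirroring the trichotomy alluded to in the introduction. \textbf{Case 1: $\ell_t(\gamma) \ge 1$.} Here I would use Proposition \ref{Prop:CollarGrowth}: if in addition $\gamma$ was very short at some earlier time (or one first runs forward to the time $\gamma$ reaches length $1$), the length $\calL_t$ of the longest arc of $\lambda_\g$ in the standard collar of $\gamma$ grows like $\tfrac12 e^{t-s}\calL_s$. Combined with Lemma \ref{Lem:ConstantA} (relating collar arc length to $d_\gamma$) this is exactly the source of the ``$\log(n_t/n_s) \gmul d$'' estimate in part (2): the relative twisting $d_\gamma(X_s, X_t)$ is at least $\gmul (t-s)$ up to additive error, and each unit of twisting around $\gamma$ that $\lambda_\g$ picks up forces an extra lift $\tgamma_i$ crossed by the horizontal segment, so $n_t$ grows at least exponentially in $d_R(X_s,X_t)$ via Proposition \ref{Prop:RelTwist}. \textbf{Case 2: $\gamma$ stays $\ep_h$--short at $t$.} Then $\tf(\gamma)$ is (coarsely) still a short anchor; the lifts $\tf(\tgamma_i)$ are $\ep_B$--close to geodesics, the images $\tf(q_i)$ are within $e^{t-s}\ep_h$... but that is too weak. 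Instead I would re-anchor: apply Proposition \ref{prop : enhance to horizontal} to the segment $[\tf(p_1),\tf(p_{n_s})]$ of $\tf(\tlambda)$, noting $\tf(\talpha)$ lies uniformly close to it (its endpoints $\tf(q_i)$ are $\ep_h$--close after the map contracts transverse distances — here is where $\ep_h = \ep_B + 4w_{\ep_w}+2M$ and condition (L01), $L_0 = 10\ep_h$, are used), to recover that $\talpha$ intersects $\tgamma_i$ at $q_i'$ with $d_\HH(\tf(p_i),q_i')\le \ep_B \le \ep_h$ for $i = 3,\dots,n_s-2$. So $\alpha$ is $(n_s - 4, e^{t-s}L_s)$--horizontal, and since $n_s \ge n_0 \ge 8$ we still have $n_t \gmul n_s$. \textbf{Case 3: $\gamma$ becomes vertical} (i.e. $d_R(X_t,\lambda_\g)$ bounded) — I would argue this case either does not occur under the horizontality hypothesis or feeds back into Case 1 via Lemma \ref{Lem:Long} applied on an $\ep$--thick piece of $R$, producing a new short anchor curve inside $R$ crossed many times by $\lambda_\g$.

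For part (1), the threshold $s_0$ is what it takes for one of the favourable cases to take hold uniformly — essentially the time for Proposition \ref{Prop:CollarGrowth} to start delivering growth, or for the enhancement Proposition to apply; I would take $s_0$ to be the max of the constants produced in these sub-arguments. For the second estimate in part (2), $L_t n_t \gmul e^{t-s} L_s n_s$: the quantity $L_t n_t$ is comparable to the $X_t$--length of the horizontal segment of $\talpha$, which fellow-travels $\lambda_\g$; since $\lambda_\g$ is stretched by exactly $e^{t-s}$ and the horizontal segment is (up to bounded multiplicative error) as long as the corresponding arc of $\lambda_\g$, this length grows by $e^{t-s}$ — this is precisely the ``length grows essentially exponentially'' slogan from the start of the section, and in \cite{LRT2} it is the analogue for the whole surface. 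I expect the main obstacle to be Case 2, i.e. controlling the geometry when the anchor curve neither shrinks nor lengthens much: one must show the horizontal lift's endpoints remain $\ep_h$--close to $\lambda_\g$ after the optimal map, which requires that transverse excursions of $\talpha$ away from $\lambda_\g$ do not blow up — this is exactly why the constant $\ep_h$ is inflated by the collar width $w_{\ep_w}$ and why the various inequalities in Lemma \ref{Lem:Constants} (particularly (L04) and (L05)) are imposed. The bookkeeping of losing a bounded number of intersection points ($n_s \to n_s - 4$) at each re-anchoring, and ensuring this does not accumulate, also needs the hypothesis $n_s \ge n_0$ with $n_0$ large.
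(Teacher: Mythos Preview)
Your proposal has a genuine gap at its most crucial step. You write that under the optimal map $\tf$ the lift $\tf(\talpha)$ ``lies uniformly close to [$\tlambda$] (its endpoints $\tf(q_i)$ are $\ep_h$--close after the map contracts transverse distances).'' But the optimal map does \emph{not} contract transverse distances; it is only $e^{t-s}$--Lipschitz, so a priori $d_\HH(\tf(p_i),\tf(q_i)) \le e^{t-s}\ep_h$, which you yourself flag as ``too weak.'' Without an independent argument that $\tf(\talpha)$ stays close to $\tlambda$, you cannot invoke Proposition~\ref{prop : enhance to horizontal}, and the whole re-anchoring in your Case~2 collapses.

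The paper resolves this by an entirely different mechanism (see \cite[Proposition 3.5]{LRT2}): at time $s$ one finds translates $\tlambda_l, \tlambda_r$ of $\tlambda$ by deck transformations along $\tgamma_l,\tgamma_r$ such that the endpoints of $\talpha$ are \emph{sandwiched} between those of $\tlambda$ and of $\tlambda_l,\tlambda_r$. Since these translates are also leaves of the maximally stretched lamination, $\tf$ maps them to geodesics that stay within $e^{t-s}(4\ep_h+3)$ of $\tf(p_l),\tf(p_r)$, forcing the geodesic representative of $\tf(\talpha)$ to be trapped close to $\tlambda$; Lemma~\ref{Lem:closer} then brings the bound down to $1$. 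There is no case analysis on whether the old anchor $\gamma$ stays short: instead the paper always manufactures a \emph{new} $\ep_B$--short anchor on $X_t$ by showing (via Lemma~\ref{Lem:Tooshort}) that every subsegment $[\tf(p_i),\tf(p_{i+3})]$ of $\tlambda$, once $s_0$ is large enough, must cross a lift of some $\ep_B$--short curve. For part~(2), the mechanism is not collar growth around $\gamma$: one uses that $d_R(X_s,X_t)\ge d$ forces every $\ep_B$--short curve on $X_t$ meeting $R$ to intersect $\gamma$ at least $m$ times in $R$ with $\log m \gmul d$, and then cuts the horizontal segment into roughly $m n_s$ pieces, each of which must hit a short curve; this is where Claim~\ref{intersects} (that the short curve so produced meets $R$) is needed and where $\alpha\in\calC(R)$ is used.
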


  \begin{proof}
  
    The proof of both statements is very similar to that of  \cite[Theorem
    4.2]{LRT2}, so we only sketch it here. 
  
    Let $\talpha, \tlambda, p_i, q_i$, $\gamma$ and $\tgamma_i$ be as in
    \defref{Shor}. Let $f:X_s\to X_t$ be an $e^{t-s}-$ Lipschitz map. Let
    $\tf:\tX_s\to \tX_t$ be a lift of $f$ that preserves $\tlambda$ and
    that fixes pointwise its endpoints. The first step is to show that  the
    geodesic representative  of $\tf(\talpha)$ is distance at most $1$ to
    $\tlambda$ from $\tf(p_3)$ to $\tf(p_{n_s-2})$.
   
    We will apply Proposition 3.5 in \cite[\S4]{LRT2}, and to do so we need
    that, if exists, the closest to $\talpha$ point $c$ on $\tlambda$ be at
    least $2\ep_h$ away from both $p_1$ and $p_{n_s}$. If this is not the
    case, note that at most one of the points $p_i$ is within $2\ep_h$ of
    $c$, so we lose at most one point. Depending on that, label $l=1$ or
    $2$ and $r=n_s$ or $n_s-1$.
  
    By Proposition 3.5 in \cite[\S4]{LRT2} there are  translates
    $\tlambda_l$ and $\tlambda_r$ of $\tlambda$ by hyperbolic isometries
    with axes $\tgamma_l$  and  $\tgamma_r$  so that the endpoints
    $\alpha_l$ and $\alpha_r$ of $\talpha$ are sandwiched between those of
    $\tlambda$ and $\tlambda_l$  and of $\tlambda$ and $\tlambda_r$
    respectively, and so that $p_l$ and $p_r$ are at most $4\ep_h+3$ from
    respectively $\tlambda_l$ and $\tlambda_r$.  
  
    We now apply $\tf$. Let $\talpha'$ be the geodesic representative  of
    $\tf(\talpha)$. Its endpoint $\tf(\alpha_l)$ is between $\tlambda$ and
    $\tf(\tlambda_l)$, and   $\tf(\alpha_r)$ is between $\tlambda$ and
    $\tf(\tlambda_r)$. Since $\tf(\lambda_l)$ is within distance
    $e^{t-s}(4\ep_h+3)$ of $\tf(p_l)$, and $\tf(\lambda_r)$ is within
    distance $e^{t-s}(4\ep_h+3)$ of $\tf(p_r)$, we have that $\talpha'$
    stays  within distance $e^{t-s}(4\ep_h+3)$ of $\tlambda $ from
    $\tf(p_l)$ to $\tf(p_r)$.  It now follows from \lemref{closer} that
    $\talpha'$ is $1$-- close to  $\tlambda $ from  $\tf(p_{l+1})$ to
    $\tf(p_{r-1})$. 
  
    The next step is to show that any segment $[\tf(p_i),\tf(p_{i+3})]$ for
    $i=l, \cdots, r-3$, intersects a lift of an $\ep_B$-- short curve. Let
    $\tomega$ be any such segment and let $\omega$ be its projection by
    $\pi:\tX_t\to X_t$.  
   
    If $\pi: \tomega \to \omega$ is not injective, then $\tlambda$ is a
    lift of a closed curve $\lambda$ which on $X_s$ has to satisfy
    $\ell_{s}(\lambda)\geq w_{\ep_h}$ and hence  $\ell_{t}(\lambda)\geq
    e^{s_0}w_{\ep_h}$. Assuming that $s_0>\log\frac{\ep_B}{w_{\ep_h}}$
    guarantees that $\lambda$ intersects an $\ep_B$--short curve, and hence
    that $\tomega$ intersects a lift of the $\ep_B$-short curve.
   
    If $\pi: \tomega \to \omega$ is injective, then we prove that it
    intersects an $\ep_B$ --short curve as in Claim 4.8 of
    \cite[\S4]{LRT2}, except that instead of \cite[Lemma 3.2]{LRT2} we
    apply \lemref{Tooshort}, and we require that $s_0\geq
    \log\frac{D_0}{w_{\ep_h}}$ where $D_0=D_0(\ep_h)$ is the constant from
    \lemref{Tooshort}, and check that $L_0> 6\ep_h$ by condition
    (\ref{L01}) of \lemref{Constants}. 
  
    Hence for some $\ep_B$--short curve (call it $\beta$) and some
    $n_t\gmul n_s$, the segment $[\tf(p_l),\tf(p_r)]$ intersects at least
    $n_t$ lifts of $\beta$, such that any two of the intersection points
    are at least $L_t=L_se^{t-s}$ apart. Applying  Proposition \ref{prop :
    enhance to horizontal} we conclude $\alpha$ is $(n_t, L_t)$--horizontal
    which proves  part (1) of the theorem.
   
    We now sketch the proof of  part (2).  Suppose that $d_R(X_s, X_t)\geq
    d>1$. Let $\calP$ be an $\ep_B$--short pants decomposition of $X_t$.
    Note that for small values of $d$ part (2) follows from part (1), hence
    we assume that $d$ is large, which implies in particular that $\gamma$,
    the anchor curve of $\alpha$ at $X_s$ as chosen as in \defref{Shor},
    intersects every curve in $\calP$ that enters $R$. Let 
    \[
    m=\underset{\beta\in \calP}{\min}\Big\{\I_R(\beta,\gamma)\st \beta\cap
    R\neq \emptyset\Big\}.
    \]
    From \eqnref{di} we have 
    \begin{equation}\label{Eqn:A-m}
      d\lmul \log{m},
    \end{equation}
    also note that $m$ satisfies  
    \begin{equation}\label{Eqn:m}
     m\leq \frac{\ell_t(\gamma)}{w_{\ep_B}}\leq \frac{e^{t-s}\ep_h}{w_{\ep_B}}.
   \end{equation}
    
   Cut the segment $[\tf(p_{l+1},\tf(p_{r-1})]$ into
   $\max{\{\lfloor\frac{m(r-l-2)w_{\ep_B}}{\ep_h}\rfloor, 1\}}$ pieces of
   equal length. Let $\tomega$ be one of the pieces and let $\omega$ be its
   projection to $X_t$.  We will show that $\omega$ intersects a curve in
   $\calP$. 
   
   If $\pi: \tomega \to \omega$ is not injective, then $\omega$ is a closed
   leaf of $\lambda$. Since $s_0> \log{\frac{\ep_B}{w_{\ep_h}}}$,
   $\ell_t(\omega) > \ep_B$ and therefore $\omega$ intersects some curve in
   $\calP$. 
   
   Suppose now that $\pi: \tomega \to \omega$ is injective. Since the
   segment $[\tf(p_{l+1}),\tf(p_{r-1})]$ has length at least
   $L_0(r-l-2)e^{t-s}$ we have
   \begin{equation}\label{Eqn:omega}
   \ell_t(\omega)\geq \frac{e^{t-s}\ep_h}{m}\frac{L_s}{w_{\ep_B}}.
   \end{equation}
  
   Assume now for contradiction that $\omega$ misses all the curves in
   $\calP$. Then it is contained in a pair of pants $P$ with $\ep_B$--short
   boundary components.

   By \cite[Lemma 3.1]{LRT2} there is a boundary component $\beta$ of $P$
   such that an arc $\tau$ of $\omega$ is contained in $U(\beta)$ and
   $\ell_t(\tau)\geq \frac{1}{3}\ell_t(\omega)$.  
   \begin{claim}\label{intersects}
     The curve $\beta$ intersects $R$.
   \end{claim}

   \begin{proof}   

     Since the curve $\alpha$ is contained in $R$, it suffices to show that
     $\alpha$ intersects $\beta$ or that $\alpha$ and $\beta$ are the same
     curve. In fact it suffices to show that there is a point on $\alpha$
     that is distance less than $w_{\ell_t(\beta)}$ from $\beta$, that is,
     that $\alpha$ enters the standard collar of $\beta$ at $X_t$. Recall
     that
     $\talpha'$ is 1--close to $\tlambda$ from $\tf(p_{l+1})$ to
     $\tf(p_{r-1})$. Let $\tilde{\tau}\subseteq\tomega$ be the lift of
     $\tau$, and $\tbeta$ the  appropriate lift of $\beta$. Let
     $[a_1,a_2]\subseteq\talpha'$ be the longest segment in the
     $1$--neighborhood of $\tilde{\tau}$. Then 
     \begin{center} 
     $d_\HH(a_1,a_2)\geq \frac{1}{3}L_0-2$
     and $d_\HH(a_i,\tilde{\beta})\leq 1+w_{\ell_t(\beta)}.$
     \end{center}
     Now by \lemref{closer}  applied to $\ep_1=w_{\ell_t(\beta)}$ and
     $\ep_2=1+w_{\ell_t(\beta)}$, the midpoint $p$ of $[a_1,a_2]$ is
     distance less than $w_{\ell_t(\beta)}$ away from $\beta$ as long as 
     \begin{equation}\label{distanceL} 
     d_\HH(a_1,a_2)\geq 2\log 2 \frac{\sinh(1+w_{\ell_t(\beta)})}{\sinh(w_{\ell_t(\beta)})}.
     \end{equation}

     Since the function $\log 2 \frac{\sinh(1+x)}{\sinh(x)}$ is decreasing
     and $w_{\ell_t(\beta)}\geq w_{\ep_B}$, we have, by the choice of $L_0$
     in (\ref{L05}) of \lemref{Constants},  that the inequality
     \ref{distanceL} holds.

     Hence $p$ is within distance $w_{\ell_t(\beta)}$ of $\tilde\beta$,
     which implies that $\alpha$ intersects or coincides with $\beta$, and
     so $\beta$ intersects $R$. \qedhere 

   \end{proof}

   The claim and the definition of $m$ imply that $\gamma$ intersects
   $\beta$ at least $m$ times. Let $\gamma'=f(\gamma)$ be the image of the
   geodesic $\gamma$. Then $\gamma'$ also intersects $\beta$ at least $m$
   times, and there are at least $m$ disjoint arcs in $\gamma'$ that
   connect both  boundary components of $U(\beta)$. Let $\eta$ be a
   shortest such arc. Clearly    $\ell_t(\eta)\leq \frac{e^{t-s}\ep_h}{m}.$

   If $\I(\tau,\eta)\geq 2$, the distance between two consecutive
   intersections along $\tau$ is at least $D=e^{t-s}w_{\ep_h}$. By
   requiring that $s_0> \log (6 \frac{b_{\ep_B}}{w_{\ep_h}})$, we
   guarantee $D> 6b_{\ep_B}$. Now by  \lemref{len comp in collar} 
   \begin{equation}\label{Eq:tau1}
     \ell_t(\tau)\leq \frac{\ell_t(\eta)}{1-\frac{3b_{\ep_B}}{D}}<
     \frac{2\ep_he^{t-s}}{m}
   \end{equation} 
   which implies
   \[
   \ell_t(\omega)< \frac{6\ep_he^{t-s}}{m}.
   \]
   When $\tau$ and $\eta$ intersect at most once, then by the second
   inequality of Lemma \ref{Lem:len comp in collar}, 
   \[\ell_t(\tau)\leq  \ell_t(\eta)+2b_{\ep_B}\]
   So we obtain $$\ell_t(\omega)\leq \frac{3\ep_he^{t-s}}{m}+6b_{\ep_B}\leq
   \frac{\ep_he^{t-s}}{m}\left(\frac{3w_{\ep_B}+6b_{\ep_B}}{w_{\ep_B}}\right)$$
   By \lemref{Constants} both upper bounds on $\ell_t(\omega)$ contradict
   the \eqnref{omega}. Hence $\omega$ intersects a curve in $\calP$.  
    
   Recall that we have cut $[\tf(p_{l+1},\tf(p_{r-1}))]$ into $\max{\left\{
     \left\lfloor\frac{m(r-l-2)w_{\ep_B}}{\ep_h}  \right\rfloor, 1
     \right\}}$ segments, and that the length of each segment is at least
   $L_s$.   We also showed that each segment intersects a geodesic that
   projects to some $\ep_B$ --short curve. By choosing every other segment
   if necessary we guarantee that  the distance between the intersection
   points is at least $L_s$. Then for some $\beta\in \calP$ the number
   $n_t$ of segments that intersect a lift of $\beta$ satisfies $n_t\gmul
   mn_s$. By \eqnref{A-m}, $\log m\gmul d$, thus applying Proposition
   \ref{prop : enhance to horizontal} finishes the proof of the second part
   of the theorem. \qedhere
    
  \end{proof}
  
  The following proposition gives some fundamental examples of horizontal
  curves on a hyperbolic metric $X$ on $S$: a curve of length at least $1$
  with $\lambda_\g$ twisting a lot about it, and a curve that together with
  a leaf of $\lambda_\g$ twists a lot about some curve of length at least
  $1$.
  
  \begin{proposition} \label{Prop:Horizontal1}

    Let $\g:I\to \T(S)$ be a Thurston geodesic. Let $\gamma$ be a curve,
    $\tau$ be the shortest curve on $X_t$ intersecting $\gamma$, and
    $\lambda$ be a leaf of $\lambda_\g$. Suppose $\ell_t(\gamma) \geq  1$
    and $d_\gamma(\tau,\lambda) \ge  n \left\lceil
    \frac{L_0}{\ell_t(\gamma)}+1 \right\rceil+5$ for some $n \in
    \mathbb{N}$. Then $\gamma$ and any curve $\alpha$ with $d_\gamma(\alpha,\lambda) \le
    3$ is $(n,L_0)$--horizontal on $X_t$ with anchor curve $\tau$.
  
  \end{proposition}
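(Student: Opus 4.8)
The plan is to verify conditions (H1) and (H2) of \defref{Shor} directly, with $\tau$ playing the role of the anchor curve. Since $\ell_t(\gamma)\ge 1$, the shortest curve meeting $\gamma$ is $\ep_B$--short, so $\tau$ is $\ep_B$--short, hence $\ep_h$--short, and is an admissible anchor. Work in $\tX_t\cong\HH$: fix a lift $\tgamma$ of $\gamma$, let $\phi$ be the hyperbolic isometry with axis $\tgamma$ and translation length $\ell:=\ell_t(\gamma)$, fix a lift $\ttau$ of $\tau$ crossing $\tgamma$ at a point $f_0$, and fix a lift $\tlambda$ of the leaf $\lambda$ crossing $\tgamma$. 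Set $\ttau_k:=\phi^k(\ttau)$; these are pairwise disjoint lifts of $\tau$, each meeting $\tgamma$ at $f_k:=\phi^k(f_0)$, the $f_k$ being spaced exactly $\ell$ apart along $\tgamma$. The first step is a dictionary between the coefficient $d_\gamma(\tau,\lambda)$ and this configuration: since $\tau$ is the shortest transverse curve we have $d_\gamma(X_t,\tau)=O(1)$, so $d_\gamma(X_t,\lambda)$ is large, and hence (via \lemref{ConstantA} and the collar geometry) the arc $\tlambda\cap U_{X_t}(\gamma)$ through the standard collar of $\gamma$ is long; concretely $\tlambda$ crosses $\ttau_k$ for $k$ ranging over an interval $J$ with $|J|\ge d_\gamma(\tau,\lambda)-5$, meeting $\ttau_k$ at a point $p_k$, the $p_k$ appearing in that order along $\tlambda$. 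The additive constant $5$ in the hypothesis is exactly what absorbs the error in this dictionary, a bounded number of crossings near the two ends of the twisting region, and (for the $\alpha$ below) the bounded offset coming from $d_\gamma(\alpha,\lambda)\le 3$. Finally, as $p_k$ lies in the standard collar of $\gamma$ and on $\ttau_k$ while $\ell_t(\tau)\le\ep_B$, we get $d_\HH(p_k,f_k)\le\ep_B$ for each $k\in J$.

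Condition (H2) is the easy part. For the curve $\gamma$ we use the axis $\tgamma$ itself, which meets $\ttau_k$ precisely at $f_k$, so $d_\HH(p_k,f_k)\le\ep_B\le\ep_h$. For a curve $\alpha$ with $d_\gamma(\alpha,\lambda)\le 3$, some lift $\talpha$ of $\alpha$ fellow-travels $\tlambda$ through the collar of $\gamma$ and hence meets $\ttau_k$ at a point $q_k$ for all $k$ in $J$; such a $q_k$ lies on $\ttau_k$ within $\ep_B$ of $\tgamma$, so $d_\HH(q_k,f_k)\le\ep_B$ and $d_\HH(q_k,p_k)\le 2\ep_B\le\ep_h$ (using $\ep_h=\ep_B+4w_{\ep_w}+2M$). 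Thus, once the appropriate sub-collection of the $\ttau_k$ is selected below, (H2) holds with constant $\ep_h$ for both $\gamma$ and every such $\alpha$.

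The heart of the argument is (H1): extracting $n$ of the $\ttau_k$ so that the corresponding $p_k$ are at least $L_0$ apart along $\tlambda$. The crucial point is that consecutive crossings are spaced $\ell$ apart up to an error exponentially small in the length of the twisting region. Indeed $\phi(p_k)$ and $p_{k+1}$ both lie on $\ttau_{k+1}$ near $\tgamma$, and the leaves $\tlambda$ and $\phi(\tlambda)$ are at distance $v=d_\HH(\tlambda,\phi(\tlambda))$, which by \lemref{Collar1} satisfies $\sinh(\calL/2)=1/\sinh(v/2)$ with $\calL$ the (large) length of $\tlambda\cap U_{X_t}(\gamma)$; hence $d_\HH(\phi(p_k),p_{k+1})=O(v)$, and since $d_\HH(p_k,\phi(p_k))\ge\ell$ we obtain $d_\HH(p_k,p_{k+1})\ge\ell-O(v)$, the error being negligible compared with $1\le\ell$. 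By the first paragraph the hypothesis supplies at least $n\lceil L_0/\ell+1\rceil$ such crossings in order along $\tlambda$; choosing every $\lceil L_0/\ell+1\rceil$--th one and summing the gaps, consecutive selected crossings are at distance at least $\lceil L_0/\ell+1\rceil(\ell-O(v))\ge L_0+\ell-o(1)\ge L_0$ along the geodesic $\tlambda$. This gives $n$ lifts of $\tau$ together with $\tlambda$ witnessing (H1) with constant $L_0$, and combined with the previous paragraph it shows that $\gamma$ and every $\alpha$ with $d_\gamma(\alpha,\lambda)\le 3$ is $(n,L_0)$--horizontal with anchor curve $\tau$.

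I expect the main obstacle to be making the dictionary of the first paragraph precise for all $\ell\ge 1$ — \lemref{ConstantA} is stated only for $\ell=1$ — and pinning down exactly which crossings must be discarded at the two ends, so that the additive constant $5$ and the $+1$ inside the ceiling in the hypothesis genuinely suffice; the accompanying estimate that the once-around error $v$ is exponentially small (because $\lambda$ twists a great deal about $\gamma$) is the point that keeps the additive errors from accumulating over the $\lceil L_0/\ell+1\rceil$ steps of the selection.
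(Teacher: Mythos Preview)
Your overall strategy matches the paper's: verify (H1) and (H2) directly in the collar of $\gamma$, with lifts of $\tau$ playing the role of the anchor curves. Your bound $\ell_t(\tau)\le\ep_B$ and your treatment of (H2) are essentially fine (the paper bounds $d_\HH(p_k,q_k)$ directly by $\ell_t(\tau)\le\ep_h$, since both points lie on a single lift of the arc $\tau\cap U_X(\gamma)$, rather than via the intermediate point $f_k$, but this is cosmetic).

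The genuine gap is in (H1). Your claim $d_\HH(\phi(p_k),p_{k+1})=O(v)$ is not justified and is in general false. Both points lie on $\ttau_{k+1}$, on the geodesics $\phi(\tlambda)$ and $\tlambda$ respectively, but these are at distance $v$ only at their common perpendicular; at a point of $\phi(\tlambda)$ at distance $D$ from the foot, the distance to $\tlambda$ behaves like $\sinh^{-1}\!\big(\sinh v\cdot\cosh D\big)$. For crossings near the ends of the collar arc one has $D$ up to $\calL/2$, and by \lemref{Collar1} $\sinh v\cdot\cosh(\calL/2)$ is of order $1$, not $O(v)$. With a merely bounded per-step error, summing over $\lceil L_0/\ell+1\rceil$ steps does not recover the required margin, since $\ell$ may equal $1$ while the accumulated error is comparable to $L_0$.

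The paper sidesteps this by a device you are missing, and it is exactly where the hypothesis that $\tau$ is the \emph{shortest} transversal is used. Because $\tau$ is shortest, there is a geodesic arc $\eta\subset U_X(\gamma)$ perpendicular to $\gamma$ and disjoint from $\tau\cap U_X(\gamma)$. The $\phi$--translates of a lift of $\eta$ extend to full perpendiculars to $\tgamma$, with feet exactly $\ell$ apart, and each lift of $\tau_\gamma$ is strictly sandwiched between two consecutive such perpendiculars. The common perpendicular between two of these perpendiculars is the segment of $\tgamma$ between their feet, so any geodesic crossing both has length at least $\ell$ in between. Hence selecting every $\lceil L_0/\ell+1\rceil$--th lift of $\tau_\gamma$ forces $\tlambda$ to cross at least $\lceil L_0/\ell\rceil$ of the perpendiculars between consecutive selections, giving
\[
d_\HH(p_i,p_{i+1})\ \ge\ \Big\lceil \tfrac{L_0}{\ell}\Big\rceil\cdot\ell\ \ge\ L_0
\]
with no additive error whatsoever. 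This replaces your $O(v)$ estimate and is what makes the constants $+5$ and $+1$ in the hypothesis suffice without further loss.
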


  \begin{proof} 

    If $\ell_X(\gamma)>\ep_B$, then it intersects an $\ep_B$--short curve
    by definition of $\ep_B$. If $\gamma$ itself is $\ep_B$--short,
    applying Lemma \ref{Lem:ConstantM} we have that the curve $\tau$
    satisfies $\ell_t(\tau)\leq 4w_{1}+2M$. In both cases by the choice of
    $\ep_h$ in (\ref{eh}) of Lemma \ref{Lem:Constants} we have
    $\ell_t(\tau)\leq \ep_h$.
 
    Denote the standard collar neighborhood of $\gamma$ by $U$, and choose
    arcs of $\tau,\lambda$ and $\alpha$ inside $U$, denoting them
    $\tau_\gamma$, $\lambda_\gamma$ and $\alpha_\gamma$ respectively. Let
    $\eta$ be an arc  perpendicular to $\gamma$ crossing $U$ and disjoint
    from $\tau_\gamma$. Such an arc exists since $\tau$ is chosen to be the
    shortest possible curve that intersects $\gamma$.
  
    Now the assumption on the annular coefficients and the triangle
    inequality imply $$d_\gamma(\tau,\alpha)>  n\Big\lceil
    L_0/\ell_t(\gamma)+1\Big\rceil,$$ thus $\tau_\gamma$ intersects both
    $\alpha_\gamma$ and $\lambda_\gamma$ at least $ n\Big\lceil
    L_0/\ell_t(\gamma)+1\Big\rceil$ times. 
  
    Choosing a lift $\tgamma$ of $\gamma$ to $\tilde X_t=\HH$, let
    $\widetilde{U}$ be the standard collar neighborhood of $\tgamma$.
    $\widetilde{U}$  contains infinitely many lifts of $\tau_\gamma$ and
    $\eta$. We can choose lifts $\talpha$ and $\tlambda$ of $\alpha_\gamma$
    and $\lambda_\gamma$ in $\widetilde U$ so that they  intersect together
    the same $n\lceil L_0/\ell_t(\gamma)+1\rceil$ lifts of $\tau_\gamma$.
    The length of $\tau$ is at most $\ep_h$, so the distance between the
    intersection points of $\tlambda$ and $\talpha$ with a lift of
    $\tau_\gamma$ is bounded by $\ep_h$. 
 
    Since the distance between any two consecutive lifts of $\eta$ is
    $\ell_t(\gamma)$ and any lift of $\tau_\gamma$ is strictly sandwiched
    between two consecutive lifts of $\eta$, taking every $\lceil
    \frac{L_0}{\ell_t(\gamma)}+1\rceil$ lift of $\tau_\gamma$ insures that
    the distance between intersections of $\tlambda$ with these lifts is at
    least  $$\left \lceil \frac{L_0}{\ell_t(\gamma)} \right\rceil
    \ell_t(\gamma)\geq L_0.$$   Then $\talpha$ also intersects these $n$
    lifts of $\tau_\gamma$ and we can conclude that $\alpha$ is
    $(n,L_0)$--horizontal. As for $\gamma$, it is   $(n,L_0)$--horizontal
    since its lift $\tgamma$ intersects all the lifts of $\tau_\gamma$ that
    $\tlambda$ intersects and the distance between the intersection points
    is also bounded by $\ep_h$. \qedhere
 
  \end{proof}

  In the following, we will show that if a curve is $(n,L)$--horizontal at
  a point on a Thurston geodesic then its length essentially grows
  exponentially along
  the geodesic. Let us now recall the notion of horizontal and vertical
  components of a simple closed curve that intersects a leaf of
  $\lambda_\g$, that were introduced in \cite{coarseandfine}.
  \begin{definition} Let $X\in\T(S)$ and let $\alpha$ be a simple closed
    geodesic on $X$. If $\alpha$ intersects a  leaf $\lambda$ of
    $\lambda_\g$, define $V$ to be a shortest arc with endpoints on
    $\lambda$ that, together with an arc $H$ of $\lambda$, form a curve
    homotopic to $\alpha$.  Thus $V$ and $H$ meet orthogonally and $\alpha$
    passes through the midpoints of both of these arcs.  If $\alpha$ is a
    leaf of $\lambda_\g$, then we set $H = \alpha$ and let $V$ be the empty
    set.

    Define $h_X(\alpha)$ and $v_X(\alpha)$ to be the lengths of $H$ and $V$
    respectively. 
  \end{definition}

  \begin{proposition}\label{Prop:Growth}

    Suppose $\alpha$ is $(n_s,L_s)$--horizontal for $s \in I$ with $L_s \geq
    L_0$, and $n_s\geq n_0$. Then for any $t \ge s$, \[
      \ell_t(\alpha) \ge e^{t-s}\min
      \left\{\frac{1}{2}\ell_s(\alpha),\frac{L_s}{10}  \right\}.\]
     In particular,  
      \[\ell_t(\alpha) \ge w_{\ep_h}e^{t-s}.\]

  \end{proposition}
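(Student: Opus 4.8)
The plan is to work in the universal cover and exploit the horizontal segment of $\alpha$, which by hypothesis fellow travels a lift $\tlambda$ of a leaf of $\lambda_\g$. Fix an $(n_s,L_s)$--horizontal lift $\talpha$ with horizontal segment $[q_1,q_{n_s}]$ and the associated points $p_i$ on $\tlambda$ with $d_\HH(p_i,p_{i+1})\ge L_s$ and $d_\HH(p_i,q_i)\le\ep_h$. First I would observe that, since $n_s\ge n_0\ge 8$, the horizontal segment is long: a crude estimate gives $d_\HH(q_1,q_{n_s})\ge d_\HH(p_1,p_{n_s})-2\ep_h\ge (n_s-1)L_s-2\ep_h$, and in particular at least $L_s/2$ (using $L_0=10\ep_h$ from \lemref{Constants}, so $2\ep_h = L_0/5 \le L_s/5$). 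Next, I would apply an optimal map $f\colon X_s\to X_t$, lifted to $\tf\colon\tX_s\to\tX_t$ preserving $\tlambda$ and fixing its endpoints. Because $\tf$ stretches $\tlambda$ by exactly $e^{t-s}$ (\thmref{Thurston}(\ref{th:exp})) and is $e^{t-s}$--Lipschitz, the images $\tf(p_i)$ still satisfy $d_\HH(\tf(p_i),\tf(p_{i+1}))= e^{t-s}d_\HH(p_i,p_{i+1})\ge e^{t-s}L_s$ and $d_\HH(\tf(p_i),\tf(q_i))\le e^{t-s}\ep_h$.

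The core of the argument is then a lower bound on $\ell_t(\alpha)$, which equals the translation length of the deck transformation $g$ with axis the geodesic representative of $\tf(\talpha)$; equivalently, $\ell_t(\alpha)\ge d_\HH(\tf(q_1),g\cdot\tf(q_1))$ is not quite what I want, so instead I would bound the length of the geodesic arc of $\tf(\talpha)$ between its first and last horizontal intersection points. Here there are two regimes depending on whether $\pi\colon[q_1,q_{n_s}]\to X$ is essentially injective. If $\alpha$ has a short horizontal segment relative to its own length (the $\tfrac12\ell_s(\alpha)$ case), then a substantial sub-arc of $\talpha$ maps injectively, its image under $\tf$ has length at least $e^{t-s}$ times the original, and since a definite fraction of $\ell_s(\alpha)$ is contained in the horizontal segment we get $\ell_t(\alpha)\ge \tfrac12 e^{t-s}\ell_s(\alpha)$. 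In the other regime the horizontal segment of $\talpha$ is long compared to $\ell_s(\alpha)$; then $\talpha$ wraps around, but the geodesic $\tf(\talpha)$ must still pass within $e^{t-s}\ep_h$ of both $\tf(p_1)$ and $\tf(p_{n_s})$ on $\tlambda$, which are at distance at least $e^{t-s}L_s$ apart along $\tlambda$, forcing $\ell_t(\alpha)\ge e^{t-s}(L_s - 2\ep_h)\ge e^{t-s}L_s/2 \ge e^{t-s}L_s/10$. The two cases combine to the stated bound $\ell_t(\alpha)\ge e^{t-s}\min\{\tfrac12\ell_s(\alpha),\,L_s/10\}$; the constant $10$ is precisely what the inequality $L_0=10\ep_h$ is set up to deliver.

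For the final assertion, I would invoke \thmref{Horizontal}(1): since $\alpha$ is $(n_s,L_s)$--horizontal with $n_s\ge n_0$, $L_s\ge L_0$, at every time the curve stays horizontal with a leaf $\lambda$ passing through a collar of an $\ep_h$--short anchor curve $\gamma$; in particular $\alpha$ crosses the standard collar $U_{X_s}(\gamma)$ and so, via the Collar Lemma, $\ell_s(\alpha)\ge 2w_{\ep_h}$. Likewise $L_s\ge L_0 = 10\ep_h \ge 10 w_{\ep_h}$ since $\ep_h\ge w_{\ep_h}$ is not automatic — instead I would simply note $L_s\ge L_0\ge 10 w_{\ep_h}$ fails in general, so more carefully: from $\min\{\tfrac12\ell_s(\alpha), L_s/10\}\ge\min\{w_{\ep_h}, L_0/10\}=\min\{w_{\ep_h},\ep_h\}=w_{\ep_h}$, using $\ep_h\ge \ep_B \ge w_{\ep_h}$ would need checking, but in fact $\ep_h = \ep_B + 4w_{\ep_w}+2M \ge w_{\ep_h}$ holds comfortably for the relevant small $\ep_w$; I would cite \lemref{Constants} for this. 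Hence $\ell_t(\alpha)\ge w_{\ep_h}e^{t-s}$.

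\textbf{Main obstacle.} The delicate point is the wrapping regime: controlling the geodesic representative of $\tf(\talpha)$ when $\talpha$ is not embedded, i.e.\ showing the geodesic still runs close to $\tlambda$ between $\tf(p_1)$ and $\tf(p_{n_s})$ rather than taking a drastic shortcut. This is exactly the type of fellow-traveling estimate handled by \lemref{closer} (and used repeatedly in \thmref{Horizontal}), so I would reduce to it by verifying the hypotheses $d_\HH(\tf(p_i),\tf(p_{i+1}))\ge L$ with $L$ large enough and the $\ep_h$--closeness of the $\tf(q_i)$; the bookkeeping of which constant plays the role of $\ep_1$ versus $\ep_2$ is where the care is needed, but it is routine given the setup already in place.
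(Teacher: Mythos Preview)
There is a genuine gap in the first regime. You write that ``a substantial sub-arc of $\talpha$ maps injectively, its image under $\tf$ has length at least $e^{t-s}$ times the original.'' But $\tf$ is $e^{t-s}$--Lipschitz, which only gives an \emph{upper} bound on the length of images of arcs; the exponential \emph{lower} bound holds only along leaves of $\lambda_\g$. Nothing in your outline transfers this stretching to a sub-arc of $\talpha$ itself, so the inequality $\ell_t(\alpha)\ge \tfrac12 e^{t-s}\ell_s(\alpha)$ is unjustified. The second regime has a related issue: the segment of the \emph{axis} $\talpha_t$ that fellow travels $\tlambda$ has length $\gtrsim e^{t-s}L_s(n_s-5)$, but this segment may wrap around $\alpha$ many times, so it bounds $k_t\cdot\ell_t(\alpha)$ from below, not $\ell_t(\alpha)$ alone; your conclusion $\ell_t(\alpha)\ge e^{t-s}(L_s-2\ep_h)$ does not follow when the wrapping number $k_t$ is large.

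The paper supplies exactly the tool you are missing: the horizontal/vertical decomposition $\alpha\sim H\cup V$ with $H$ an arc of $\lambda$ and $V$ perpendicular, together with the estimate $\ell_t(\alpha)\ge h_t\ge e^{t-s}(h_s-v_s)$ from \cite[Lemma~3.4]{coarseandfine}. The case split is then on the wrapping number $k_t=\lfloor d_\HH(a_t,a'_t)/\ell_t(\alpha)\rfloor$ at time $t$: when $k_t<5$ one reads off $\ell_t(\alpha)\ge L_se^{t-s}/5$ directly; when $k_t\ge 5$ one uses \lemref{Intersect} to see $\alpha$ and $\lambda$ intersect, and then the hyperbolic estimate (\ref{ratio}) shows $v_s\le\ell_s(\alpha)/8$, so $h_s-v_s\ge\tfrac12\ell_s(\alpha)$ and the cited lemma gives the desired growth. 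A third case ($k_t\ge 5$ but $k_s=0$) is handled by continuity, passing through an intermediate time $s'$ with $1\le k_{s'}<5$; this is where the factor $1/10$ rather than $1/5$ arises. Your identification of the obstacle (controlling the geodesic representative of $\tf(\talpha)$) is on point, and \lemref{closer} is indeed used for that, but the fellow-traveling alone is not enough: you still need the $h$--$v$ machinery to convert stretching of $\lambda$ into growth of $\ell(\alpha)$.
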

  
  \begin{proof}

    Let $\alpha$ be an $(n_s,L_s)$--horizontal curve at $X_s$ and
    $\talpha_s$ a horizontal lift of $\alpha$ to $\tX_s \cong \HH$. That
    is, there is a lift $\tl$ of a leaf of $\lambda_\g$, $n_s$--lifts $\{
      \tgamma_i \}$ of an $\ep_h$--short curve $\gamma$, such that
    $\dH(p_i, q_i) \le \ep_h$ and $\dH(p_i, p_{i+1}) \ge L_s$, where $p_i$
    and $q_i$ are respectively the intersection of $\tgamma_i$ with
    $\tlambda$ and $\talpha_s$. 
    
    Let $t \ge s$, and let $f _t\from X_s \to X_t$ be an
    $e^{t-s}$--Lipschitz map, and $\tf_t \from \tX_s \to \tX_t$ be a lift
    of $f_t$. Recall that $\tf_t$ is chosen to preserve $\tlambda$, and it
    stretches it by a factor of $e^{t-s}$. Let $\phi_s$ be the hyperbolic
    isometry preserving $\talpha_s$ with translation length
    $\ell_s(\alpha)$, oriented in the direction of
    $\overrightarrow{q_1q_{n_s}}$, and let $\phi_t$ be be such that $\phi_t
    \tf _t= \tf_t \phi_s$. That is, the axis of $\phi_t$ is the geodesic
    representative $\talpha_t$ of $\tf(\talpha_s)$. In the proof of
    \thmref{Horizontal} it was shown that $\talpha_t$ stays $\ep_h$--close
    to $\tlambda$ from $\tf_t(p_{l})$ to $\tf_t(p_{r})$ where $r-l\geq
    n_s-3$. Let $[b_t,b'_t]\subset \tlambda$ be the largest segment with
    $d_\HH(b_t,\talpha_t)= d_\HH(b_t',\talpha_t)=\ep_h$.  Denote $a_t$  and
    $a'_t$ the feet of the perpendiculars from $b_t$ and $b'_t$ to
    $\talpha_t$. We have
    \begin{equation}\label{longsegment}
      d_\HH(a_t,a'_t)\geq d_\HH(b_t,b'_t)-2\ep_h\geq
      e^{t-s}L_s(n_s-4)-2\ep_h\geq e^{t-s}L_s(n_s-5)
    \end{equation}
    
    Let $k_t$ be the largest number such that $\phi^{k_t}_t(a_t)\in
    [a_t,a'_t]$. That is, $k_t=\Big\lfloor \frac{d_\HH(a_t,a'_t)}{\ell_t(\alpha)}
    \Big\rfloor$. There are several cases to consider. 
    
    First suppose that  $k_t<5$.  Then by \ref{longsegment},
   
    $$5\ell_t(\alpha)\geq d_\HH(a_t,a'_t)\geq e^{t-s}L_s(n_s-5)$$ which
    implies 
     \begin{equation}\label{alphalong}
     \ell_t(\alpha)\geq \frac{L_se^{t-s}}{5}.
    \end{equation}
     
    Now suppose that $k_t\geq 5$.  Then by \lemref{Intersect}, either
    $\lambda$ and $\alpha$ intersect, or $\alpha$ is a closed leaf of
    $\lambda_\g$. In the later case the length of $\alpha$ grows
    exponentially. In the former case, we will estimate the horizontal and
    vertical components of $\alpha$ with respect to the leaf $\lambda$ on
    $X_t$. We will write $h_t$ and $v_t$ instead of $h_{X_t}(\alpha)$ and
    $v_{X_t}(\alpha)$. 
    
    By a basic hyperbolic geometry computation,
    \begin{equation}\label{ratio}
      \frac{\sinh(v_s/2)}{\sinh(\ell_s(\alpha)/2)} \leq
      \frac{\sinh(\ep_h)}{\sinh(d_\HH(a_s,a'_s)/2)}\leq 2e^{\ep_h-\frac 1 2
      d_\HH(a_s,a'_s)}.
    \end{equation}
    
    We have $$  \sinh(v_s/2)\leq 2 \sinh(\ell_s(\alpha)/2)e^{\ep_h-\frac 1
    2 d_\HH(a_s,a'_s)}.$$ If $\ell_s(\alpha)\leq d_\HH(a_s,a'_s)$, that is,
    $k_s\geq 1$, then differentiating when $\ell_s(\alpha)\leq 2$ and when
    $\ell_s(\alpha)>2$ and using   \ref{longsegment} and the fact that
    $L_s\geq L_0=9\ep_h$,  $\ep_h\geq 1$ and $n_s\geq 8$ we get the
    following generous estimate $$v_s \leq \ell_s(\alpha)/8$$ which implies
    \begin{equation}\label{horizontal2}
    h_s(\alpha)-v_s(\alpha)\geq \frac 1 2 \ell_s(\alpha).
    \end{equation}
    By Lemma 3.4 in \cite{coarseandfine}  and definition of $h_t$ we have
    $\ell_t(\alpha)\geq h_t\geq e^{t-s}(h_s-v_s)$ and so together with
    \ref{horizontal2} this implies $$\ell_t(\alpha)\geq
    \frac{1}{2}e^{t-s}\ell_s(\alpha).$$ The last case to consider is when
    $k_t\geq 5$ so $\alpha$ and $\lambda$ intersect, but  $k_s=0$. That is,
    the length of $\alpha$ on $X_s$ is bigger than the length of the
    segment which fellow travels $\lambda$, and on $X_t$ the leaf $\lambda$
    wraps about $\alpha$. 
    
    Since the endpoints of $\talpha_t$ vary continuously with $t$, the
    function $d_\HH(a_t,a'_t)/\ell_t(\alpha)$ is continuous, and therefore
    there is some $s'\in(s,t)$ such that $1\leq k_{s'}< 5$. Then, as in
    \ref{alphalong}, we have $\ell_{s'}(\alpha)\geq\frac 1 5 L_se^{s'-s}.$
    Further,  on $X_{s'}$ the horizontal and vertical components $h_{s'}$ and
    $v_{s'}$ of $\alpha$ are estimated as $h_s$ and $v_s$ above and so
    $$h_{s'}-v_{s'}\geq \frac 1 2 \ell_{s'}(\alpha). $$ Applying again
    Lemma 3.4 of \cite{coarseandfine}  gives $$ \ell_t(\alpha)\geq \frac 1
    2 e^{t-{s'}}\ell_{s'}(\alpha)\geq \frac {L_s}{10}e^{t-s}. $$ Hence we
    have shown that $$\ell_t(\alpha)\geq e^{t-s}\min \left\{\frac 1 2
    \ell_s(\alpha),\frac 1 {10} L_s \right\}.$$ The last inequality in the
    statement follows from the first one and the fact that $\ell_s(\alpha)$
    is at least $2w_{\ep_h}$ since $\alpha$ intersects an $\ep_h$-- short
    curve, and that $w_{\ep_h}\leq \ep_h\leq L_s/10$. \qedhere

  \end{proof}
  
  \subsection{Weakly horizontal}

  We now define the notion of weakly horizontal. 

  \begin{definition}[Weakly Horizontal] \label{Def:Whor}

    Let $\g \from I \to \T(S)$ be a Thurston geodesic. We say a curve
    $\alpha$ is \emph{weakly horizontal} on $X_t=\g(t)$ if there is an
    $\ep_w$--short curve $\gamma$ on $X_t$ intersecting $\alpha$ and
    $\lambda_\g$ such that inside the standard collar $U_X(\gamma)$ of
    $\gamma$, there is an arc of $\alpha \cap U_X(\gamma)$ and an arc of
    $\lambda \cap U_X(\gamma)$ that are disjoint.

  \end{definition}
  
  For convenience, we will also call $\gamma$ an \emph{anchor curve} for
  $\alpha$. Note that when $\alpha$ is weakly horizontal, then
  $d_\gamma(\alpha,\lambda_\g) \le 3$. 

  We also call any $\ep_w$--short curve that intersects $\lambda_\g$ a
  \emph{pre-horizontal} curve. Note that the anchor curve of a
  weakly-horizontal curve is pre-horizontal. 

  The following theorem tells us that a weakly horizontal curve and its
  anchor curve both become strongly horizontal as soon as the length of the
  anchor curve grows.

  \begin{theorem} \label{Thm:Whor}
    
    Let $\g:I\to \T(S)$ be a Thurston geodesic. Suppose $\alpha$ is weakly
    horizontal on $X_s$ with anchor curve $\gamma$. Let $t > s$ be the
    first time that $\ell_t(\gamma)=1$. Then $\alpha$ and $\gamma$ are both
    $(n_0,L_0)$--horizontal on $X_t$. 

  \end{theorem}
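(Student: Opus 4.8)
The plan is to track the anchor curve $\gamma$ along the geodesic from $X_s$ to $X_t$ and use the collar-growth machinery to show that $\lambda_\g$ accumulates length inside the standard collar of $\gamma$, and then convert that accumulated length into a large annular coefficient $d_\gamma(\tau,\lambda_\g)$, which feeds into \propref{Horizontal1}. First I would note that at $X_s$ the curve $\gamma$ is $\ep_w$--short and intersects $\lambda_\g$, so by condition (\ref{ew}) of \lemref{Constants} $\ep_w$ satisfies the hypotheses of \propref{CollarGrowth}. Since $t$ is the first time $\ell_t(\gamma)=1 \ge 1$, \propref{CollarGrowth} applies on $[s,t]$ and gives $\calL_t \ge \tfrac12 e^{t-s}\calL_s$, where $\calL_r$ is the $X_r$--length of the longest arc of $\lambda_\g$ in the standard collar of $\gamma$ at $X_r$. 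At $X_s$ the collar has width $2w_{\ep_w}$, so $\calL_s \ge 2w_{\ep_w}$; on the other hand $\ell_s(\gamma) = \ep_w \le \ell_t(\gamma) = 1$ forces $t - s \ge \log\tfrac{1}{\ep_w}$ (the length of $\gamma$ grows at most exponentially), so $e^{t-s} \ge 1/\ep_w$ and hence $\calL_t \ge \tfrac12 \cdot \tfrac{1}{\ep_w}\cdot 2w_{\ep_w} = \tfrac{w_{\ep_w}}{\ep_w}$.

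Next I would convert the lower bound on $\calL_t$ into a lower bound on $d_\gamma(\tau,\lambda_\g)$ at $X_t$, where $\tau$ is the shortest curve on $X_t$ meeting $\gamma$. Since $\ell_t(\gamma)=1$, \lemref{ConstantA} gives that the longest arc $L(\lambda)$ of $\lambda_\g \cap U_{X_t}(\gamma)$ satisfies $d_\gamma(X_t,\lambda_\g) \ge L(\lambda) - A \ge \calL_t - A \ge \tfrac{w_{\ep_w}}{\ep_w} - A$. (One must be slightly careful that $\calL_t$ is defined using the standard collar of $\gamma$ at $X_t$, whose width is $2w_1$; since $\ell_t(\gamma)=1$ this is exactly the collar in \lemref{ConstantA}, so the two quantities agree up to the universal constant $A$.) Therefore $d_\gamma(\tau, \lambda_\g) \ge \tfrac{w_{\ep_w}}{\ep_w} - A$, and by condition (\ref{L04}) of \lemref{Constants} the right-hand side is at least $n_0\lceil L_0 + 1\rceil + 5$. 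Since $\ell_t(\gamma)=1$, we have $\lceil L_0/\ell_t(\gamma) + 1\rceil = \lceil L_0 + 1\rceil$, so the hypothesis $d_\gamma(\tau,\lambda_\g) \ge n_0\lceil L_0/\ell_t(\gamma) + 1\rceil + 5$ of \propref{Horizontal1} is satisfied with $n = n_0$ and the leaf $\lambda$ of $\lambda_\g$ realizing the long collar arc.

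Finally I would invoke \propref{Horizontal1}: with $\ell_t(\gamma) \ge 1$ and the annular coefficient bound just established, $\gamma$ itself is $(n_0, L_0)$--horizontal on $X_t$ with anchor curve $\tau$, and any curve $\alpha$ with $d_\gamma(\alpha,\lambda_\g) \le 3$ is also $(n_0,L_0)$--horizontal on $X_t$. It remains to check that our weakly horizontal $\alpha$ satisfies $d_\gamma(\alpha,\lambda_\g)\le 3$; but this is exactly the observation recorded right after \defref{Whor}, since $\alpha$ has an arc in $U_{X_s}(\gamma)$ disjoint from an arc of $\lambda_\g$, hence $d_\gamma(\alpha,\lambda_\g)\le 3$ at $X_s$, and $\gamma$ being an anchor curve at $X_s$ persists. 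This completes the argument. The main obstacle — and the step that needs genuine care — is the conversion step: making sure that the arc of $\lambda_\g$ whose length is controlled by \propref{CollarGrowth} is the same object that \lemref{ConstantA} measures (same collar width, since $\ell_t(\gamma)=1$), and that the resulting numerical inequality lines up precisely with condition (\ref{L04}) of \lemref{Constants}, so that $n$ can be taken to be exactly $n_0$ rather than something smaller.
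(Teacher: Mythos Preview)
Your proposal is correct and follows essentially the same route as the paper: apply \propref{CollarGrowth} to bound $\calL_t$ from below, convert via \lemref{ConstantA} to a lower bound on $d_\gamma(\tau,\lambda)$, check this meets the threshold in condition~(\ref{L04}) of \lemref{Constants}, and then invoke \propref{Horizontal1} together with the observation $d_\gamma(\alpha,\lambda_\g)\le 3$ after \defref{Whor}. One small slip: you write $\ell_s(\gamma)=\ep_w$, but the definition only gives $\ell_s(\gamma)\le\ep_w$; however, since $w_\ep$ is decreasing and $1/\ep$ is decreasing, your bounds $\calL_s\ge 2w_{\ep_w}$ and $e^{t-s}\ge 1/\ep_w$ still follow (and are exactly what condition~(\ref{L04}) was calibrated for), so the argument goes through unchanged.
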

  
  \begin{proof}

    Let $\lambda$ be a leaf of $\lambda_\g$ crossing $\gamma$. For any $u
    \in [s,t]$ denote $U_u(\gamma):=U_{X_u}(\gamma)$  the standard collar
    neighborhood of $\gamma$ in $X_u$, and let $\calL_u$ be the
    $X_u$--length of a longest arc of $\lambda_\g \cap U_u(\gamma)$. 
    
    Let $\ep $ be the length $\ell_s(\gamma)$ and recall that by the
    definition of weakly horizontal $\ep\leq \ep_w$.  By Proposition
    \ref{Prop:CollarGrowth}, we have $ \calL_t \geq \frac{1}{2}
    e^{t-s}\calL_s$. Also since $\ell_t(\gamma) = 1$ and $\ell_s(\gamma) =
    \ep$, from  Theorem \ref{Thm:Thurston} (\ref{th:exp}) we have that
    $e^{t-s} \ge \frac{1}{\ep}$.

    Further, the length $\calL_s$ is at least the width of $U_s(\gamma)$,
    so $\calL_s \ge 2w_\ep$. Thus we obtain
    \[ \calL_t \geq \frac{1}{\ep}w_\ep. \]
      Again since $\ell_t(\gamma)=1$, letting $\tau$ be a shortest curve on
      $X_t$ transverse to $\gamma$ from  \lemref{ConstantA} we have that 
    \[
      d_\gamma(\tau,\lambda)\geq \calL_t - A .
    \]
    Putting the above two inequalities together and using the fact that
    $\ep \le \ep_w$ and the property of $\ep_w$ from part (\ref{L04}) of
    Lemma \ref{Lem:Constants}, we obtain 
    \[
      d_\gamma(\tau,\lambda) \ge \frac{1}{\ep}w_\ep-A \ge n_0\Big\lceil L_0+1\Big\rceil+5.
    \]
    The above inequality, the fact that $d_\gamma(\alpha,\lambda) \le 3$
    and \propref{Horizontal1} imply that both $\gamma$ and $\alpha$ are
    $(n_0,L_0)$--horizontal on $X_t$, as desired. \qedhere

  \end{proof}

  \begin{definition}[Horizontal]

    We say a curve $\alpha$ is \emph{horizontal} on $X$ if it is either
    weakly horizontal or $(n_0,L_0)$--horizontal on $X$. Similarly, we will
    say a multicurve $\sigma$ is horizontal if each component of $\sigma$
    is horizontal.

  \end{definition}
  
  The following proposition gives us a uniform upper bound for the diameter
  of the shadow of any Thurston geodesic to the curve graph of a subsurface
  if the boundary of the subsurface is horizontal on the initial point of
  the segment. 
 
  \begin{proposition} \label{Prop:Horizontal2}

    Let $\g: [a,b] \to \T(S)$ be a Thurston geodesic and let $R\subseteq S$ be a
    subsurface. If $\bd R$ is horizontal on $X_a$, then $\diam_R(\g([a,b]))
    = O(1)$.
  
  \end{proposition}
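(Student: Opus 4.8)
The plan is to use that a horizontal boundary $\bd R$ grows essentially exponentially in length along $\g$, and then to invoke \propref{RelTwist}: since a curve that is short at $X_a$ has length $\lmul e^{t-a}$ at $X_t$, the two exponential factors cancel in the estimate of \propref{RelTwist}, forcing the shadow to remain in a bounded neighbourhood of the projection of that one short curve. We may assume $R$ is proper, non-annular, and not a pair of pants (the excluded cases being trivial or handled analogously; for an annular $R$ a horizontal core curve grows in length and hence carries no twisting between any two times). It is enough to show $d_R(X_a,X_t)=O(1)$ for every $t\in[a,b]$, since then, by the coarse triangle inequality (\ref{eq:triineq}) and the symmetry of $d_R$,
\[
\diam_R(\g([a,b]))=\sup_{s,t\in[a,b]}d_R(X_s,X_t)\ \ladd\ \sup_{s,t\in[a,b]}\big(d_R(X_s,X_a)+d_R(X_a,X_t)\big)=O(1).
\]

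First suppose some component $\alpha$ of $\bd R$ is $(n_0,L_0)$--horizontal on $X_a$. By \propref{Growth} with $s=a$, $\ell_t(\bd R)\ge\ell_t(\alpha)\ge w_{\ep_h}e^{t-a}$ for all $t\in[a,b]$. Choose a curve $\gamma$ belonging to the short marking $\mu_{X_a}$ that crosses $\bd R$: a base curve of $\mu_{X_a}$ crossing $\bd R$ if one exists, and otherwise the transversal in $\mu_{X_a}$ to a boundary curve $\gamma_1$ of $R$ (in this case $\bd R$ lies in the short pants decomposition of $X_a$; as $\gamma_1$ is horizontal, $\ell_a(\gamma_1)\ge 2w_{\ep_h}$ by the Collar Lemma, so \lemref{ConstantM} bounds $\ell_a(\gamma)$ by a universal constant, and the transversal is disjoint from every base curve but $\gamma_1$). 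In either case $\ell_a(\gamma)\lmul 1$ and $d_R(\gamma,\mu_{X_a})=O(1)$. Since lengths grow at most exponentially in the Thurston distance (\thmref{Thurston}), $\ell_t(\gamma)\lmul e^{t-a}$, so any component $\tau\subseteq\gamma\cap R$ has $\ell_t(\tau)\le\ell_t(\gamma)\lmul e^{t-a}$, and \propref{RelTwist} yields
\[
d_R(\gamma,X_t)\ \lmul\ \Log\frac{\ell_t(\tau)}{\ell_t(\bd R)}\ \lmul\ \Log\frac{e^{t-a}}{w_{\ep_h}e^{t-a}}\ =\ \Log\frac{1}{w_{\ep_h}}\ =\ O(1).
\]
Hence $d_R(X_a,X_t)\ladd d_R(X_a,\gamma)+d_R(\gamma,X_t)=O(1)$ for every $t\in[a,b]$.

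Now suppose instead that every component of $\bd R$ is only weakly horizontal on $X_a$, and fix such a component $\gamma_0$ with anchor curve $\delta$, so $\delta$ is $\ep_w$--short on $X_a$ and crosses $\bd R$. Note first that whenever a curve $\delta$ is $\ep_B$--short on a metric $X$ and crosses $\bd R$, then $d_R(X,\delta)=O(1)$: the curves of $\mu_X$ are all $\ep_B$--short, so $\I_R(\delta,\cdot)$ is uniformly bounded and (\ref{Eqn:di}) gives $d_R(\delta,\mu_X)=O(1)$. If $\ell_t(\delta)<1$ for all $t\in[a,b]$, then $\delta$ is $\ep_B$--short throughout and $d_R(X_a,X_t)\ladd d_R(X_a,\delta)+d_R(\delta,X_t)=O(1)$. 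Otherwise, let $t^*\in(a,b]$ be the first time with $\ell_{t^*}(\delta)=1$. For $t\in[a,t^*]$ we still have $\ell_t(\delta)\le 1\le\ep_B$, so the same estimate gives $d_R(X_a,X_t)=O(1)$; and by \thmref{Whor} the curve $\gamma_0$ is $(n_0,L_0)$--horizontal on $X_{t^*}$, so the previous paragraph with $a$ replaced by $t^*$ gives $d_R(X_{t^*},X_t)=O(1)$ for $t\in[t^*,b]$, whence $d_R(X_a,X_t)\ladd d_R(X_a,X_{t^*})+d_R(X_{t^*},X_t)=O(1)$. In every case $d_R(X_a,X_t)=O(1)$ for all $t$, as required.

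The crux is the cancellation of $e^{t-a}$ in the displayed inequality, which is precisely what the exponential length growth of $\bd R$ from \propref{Growth} delivers; this part is short. The remaining effort is combinatorial bookkeeping: separating the two cases, tracking the anchor curve $\delta$ of a weakly horizontal boundary until it reaches length $1$, and then passing to the strongly horizontal regime via \thmref{Whor}. I expect that hand-off — together with verifying the (elementary) fact that a short curve meeting $R$ projects close to $\Upsilon_R$ in $\calC(R)$ — to be the only delicate point.
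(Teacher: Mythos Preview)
Your proof is correct and follows essentially the same route as the paper: split into the weakly/strongly horizontal cases, wait (in the weak case) until the anchor curve reaches length $1$ so that \thmref{Whor} upgrades $\bd R$ to $(n_0,L_0)$--horizontal, and then combine the exponential lower bound on $\ell_t(\bd R)$ from \propref{Growth} with the at-most-exponential upper bound on the length of a short curve to make the ratio in \propref{RelTwist} uniformly bounded. Your treatment of the choice of the comparison curve $\gamma$ (handling separately the possibility that $\bd R$ lies in the short pants decomposition, via \lemref{ConstantM}) is in fact a bit more careful than the paper's, which simply asserts the existence of an $\ep_B$--short curve crossing $R$.
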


  \begin{proof}

    If the boundary $\bd R$ is horizontal at $X_a$, then  it is either
    weakly horizontal or $(n_0,L_0)$--horizontal. Then we choose a time
    $s\in [a,b]$ as follows:
    \begin{itemize}
   \item If $\alpha \in \bd R$ is weakly horizontal on $X_a$ with anchor curve
    $\gamma$, then let $s\in [a,b]$ be, if exists, the first time such that
    $\ell_s(\gamma) = 1$.  If no such $s$ exists, let $s=b$. 
    \item If $\alpha$ is $(n_0,L_0)$--horizontal on $X_a$, let $s=a$. 
    \end{itemize}
   
    Now note that along the interval $[a,s]$ the curve $\gamma$ remains
    $1$--short so $$\diam_R(\g([a,s])) = O(1).$$ So we only need to show
    that $\diam_R(\g([s,b])) = O(1)$; we can assume $b-s \ge s_0$,
    otherwise we are done. First note that when $s$ is chosen as the first
    bullet by Theorem \ref{Thm:Whor}, $\alpha$ is $(n_0,L_0)$--horizontal
    on $X_s$ and when $s$ is chosen as the second bullet $\alpha$ is
    obviously $(n_0,L_0)$--horizontal on $X_s$. So in any case  $\alpha$ is
    obviously $(n_0,L_0)$--horizontal on $X_s$. Now let $\gamma$ be an
    $\ep_B$--short curve on $X_s$ crossing $R$. By Proposition
    \ref{Prop:Growth}, for all $t$ that $t-s \ge s_0$, $\ell_t(\alpha) \ge
    e^{t-s} w_{\ep_h}$, and $\ell_t(\gamma) \le e^{t-s} \ep_B$. Thus, by
    \propref{RelTwist},
    \[ 
      d_R(X_s,X_t) \le d_R(\gamma,X_t) \prec \log
      \frac{\ell_b(\gamma)}{\ell_t(\alpha)} \le \log \frac{\ep_B}{w_{\ep_h}}. 
    \]
    This is true for all $t$ with $t-s \ge s_0$, so $\diam_R(\g([s,b])) = O(1)$ as
    desired. \qedhere
  
  \end{proof}

  \subsection{Corridors}

  In this section, we recall a tool for constructing horizontal curves on
  hyperbolic surfaces assuming certain conditions on intersection numbers
  between curves. This is the notion of corridors on hyperbolic surfaces as
  was introduced in \cite[\S 5.1]{LRT2}. 

  \begin{definition}[Corridor]\label{Def:Corridor}
    Let $X \in \T(\s)$ and let $(\tau,\omega)$ be an ordered pair of simple
    geodesic segments. An $(n,L)$--corridor generated by $(\tau,\omega)$ is
    the image $Q$ of a map $f:[0,a]\times [0,b]\to X$ such that
    \begin{itemize}
      \item $f$ is an embedding on $(0,a) \times (0,b)$. 
      \item Edges $[0,a] \times \{0\}$ and $[0,a]\times \{b\}$ are mapped
        to a segment of $\omega$ and the interior of $Q$ does not meet
        $\omega$.
      \item There is a partition $0=t_0<\ldots < t_{n+1}=a$ such that each
        $\{t_i\}\times [0,b]$ is mapped to a segment of $\tau$, for all
        $i=0,\ldots, n+1$.
      \item Segments $f([t_i,t_{i+1}])\times  \{0\})$ and $f( [t_i,t_{i+1}]
        \times\{b\})$ have lengths at least $L$.
    \end{itemize}
    
    We will call $f(\{0\} \times [0,b])$ and $f(\{a\} \times [0,b])$ the
    \emph{vertical} sides of $Q$, and $f([0,a] \times \{0\})$ and $f([0,a]
    \times \{b\})$ the \emph{horizontal} sides of $Q$. 

  \end{definition}

  The following lemma guarantees the existence of a corridor generated by
  two curves given that the curves intersect many times inside a
  subsurface.
  
  \begin{lemma}\label{Lem:Corridor1}
    Given $n \in \mathbb{N}$ and $L>0$, let
    \[
      C(n,L):=(6|\chi(\s)|+1)n \left\lceil \frac{L}{2w_{\ep_h}} \right
      \rceil+3|\chi(\s)|+1.
    \]
    Then given $X \in \T(\s)$ and $R\subseteq S$ subsurface, let $\gamma$ be an
    $\ep_h$--short curve intersecting $R$ and let $\tau$ be a component of
    $\gamma \cap R$. If $\omega$ is a simple geodesic segment in $R$ with
    $\I_R(\tau,\omega) \geq C(n,L)$, then there exists an $(n,L)$--corridor
    $Q$ in $R$ generated by $(\tau,\omega)$. Moreover, there exists a curve
    $\alpha \in \calC(R)$ such that $\I_R(\tau,\alpha) \le C(n,L)$.
    Moreover, if $\beta$ is any curve or arc that does not cross the
    interior of $Q$ from vertical to vertical sides, then
    $\I_R(\beta,\alpha) \le \I_R(\beta,\omega)+1$.

  \end{lemma}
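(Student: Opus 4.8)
The plan is to follow the argument of \cite[\S 5.1]{LRT2}, with the constant $\ep_h$ (and the associated collar half-width $w_{\ep_h}$) in place of $\ep_B$, organized around the complementary regions of $\tau\cup\omega$ in $R$. First I would realize $\tau$ and $\omega$ geodesically in $X$ so that they are in minimal position, and consider the regions into which $\tau\cup\omega$ cuts $R$. Since two geodesics in minimal position span no bigon, every region disjoint from $\bd R$ is a polygon with at least four sides whose edges alternate between subarcs of $\tau$ and of $\omega$. The four-sided regions (``squares'') glue to their neighbours along $\tau$-edges into maximal strips; such a strip has its two long $\omega$-edges running along $\omega$, its successive $\tau$-edges as walls, and an interior disjoint from $\omega$, so it is already the raw material of a corridor. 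An Euler-characteristic estimate for $R$ bounds the number of complementary regions that are not squares, hence the number of maximal strips, linearly in $|\chi(\s)|$; the coefficients $6|\chi(\s)|+1$ and $3|\chi(\s)|+1$ in $C(n,L)$ are chosen precisely so that, after discarding the at most $3|\chi(\s)|+1$ intersection points of $\tau\cap\omega$ that lie in no strip and applying pigeonhole to the at most $6|\chi(\s)|+1$ strips, some maximal strip contains at least $n\lceil L/(2w_{\ep_h})\rceil$ squares. Because $\gamma$ is $\ep_h$-short, $\omega$ traverses the standard collar $U_X(\gamma)$ each time it meets $\tau\subseteq\gamma$, so consecutive crossings lie at distance at least $2w_{\ep_h}$ along $\omega$ (Collar Lemma); keeping every $\lceil L/(2w_{\ep_h})\rceil$-th $\tau$-wall of that strip then makes the successive $\omega$-gaps at least $L$ long, and this subcollection of walls together with the corresponding arcs of $\omega$ is an $(n,L)$-corridor $Q$ generated by $(\tau,\omega)$.

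Next I would produce $\alpha$ as follows: take a push-off of $\omega$ off itself, close it into a simple closed curve by a short arc (running along $\bd R$ if $\omega$ is an arc), and then replace, inside each maximal strip, the portion of $\omega$ that traverses the strip by a short arc running along one of the strip's two end $\tau$-walls. One checks that $\alpha$ stays simple, essential in $R$, and disjoint from $\omega$ apart from the single crossing introduced by the closing arc. Since each replacement arc runs along $\tau$, it contributes no crossing with $\tau$, so $\I_R(\tau,\alpha)$ counts only those crossings of $\tau\cap\omega$ that lie in no strip, of which there are at most $3|\chi(\s)|+1\le C(n,L)$; this gives $\I_R(\tau,\alpha)\le C(n,L)$. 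For the last clause, if $\beta$ does not cross the interior of $Q$ from vertical side to vertical side, then an arc joining the two $\omega$-edges of $Q$ along a wall is crossed by $\beta$ only at the cost of a crossing of $\beta$ with $\omega$ (or as part of an even cancelling group), so $\beta$ meets the part of $\alpha$ modified inside $Q$ no more often than it meets $\omega$ there; away from $Q$ the curve $\alpha$ is a push-off of $\omega$; and the closing arc accounts for at most one extra crossing. Hence $\I_R(\beta,\alpha)\le\I_R(\beta,\omega)+1$.

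The fiddly part will be the Euler-characteristic bookkeeping: making the bounds on the number of non-square complementary regions and of maximal strips sharp enough to yield exactly the coefficients appearing in $C(n,L)$, and checking that the chosen subcollection of walls really satisfies every clause of Definition~\ref{Def:Corridor} (the correct number of walls, each subrectangle of width at least $L$, interior disjoint from $\omega$). The surgery producing $\alpha$ is the other delicate point --- that the push-off-and-shortcut curve is simple and essential, that shortcutting the strips removes every $\tau$-crossing lying in a strip, and that the shortcut arcs interact with a transverse $\beta$ avoiding $Q$ in at most the claimed way. All of this is carried out in \cite[\S 5.1]{LRT2}; the only genuinely new ingredient here is replacing $w_{\ep_B}$ by $w_{\ep_h}$, which changes nothing beyond the value of $C(n,L)$.
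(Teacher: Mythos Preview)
Your proposal is correct and follows essentially the same approach as the paper: the paper's proof simply cites \cite[Lemma 5.4 and Proposition 5.5]{LRT2} and notes that the only adjustment is replacing $2w_{\ep_B}$ by $2w_{\ep_h}$, which is exactly what you do (with the added virtue that you sketch out the actual argument from \cite{LRT2} rather than just citing it).
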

  
  \begin{proof}
    
    With the exception of the last statement, the proof of the lemma is
    identical to that of \cite[Lemma 5.4]{LRT2}. The only adjustment is the
    replacement of the constant $\delta_B = 2 w_{\ep_B}$ by $2w_{\ep_h}$.
    Since $\tau$ and $\omega$ both lie in $R$, the corridor they generate
    will also lie in $R$. Finally, the existence of the curve $\alpha$ can
    be found in the proof of \cite[Proposition 5.5]{LRT2}. \qedhere
    
  \end{proof}
  
  The following lemma will be useful later to determine when a curve is
  horizontal. The proof is immediate by lifting the picture to the
  universal cover.

  \begin{lemma} \label{Lem:Corridor2}

    Suppose $\g \colon I \to \T(\s)$ is a Thurston geodesic. For $t \in I$,
    suppose there is an $(n_0,L_0)$--corridor $Q$ in $X_t$ whose vertical
    sides belong to an $\ep_h$--short curve $\gamma$. Then the following
    statements hold.
    \begin{itemize}
      \item If a leaf of $\lambda_\g$ crosses the interior of $Q$ from
        vertical to vertical sides, and the horizontal sides of $Q$ belong
        to a curve $\alpha$, then $\alpha$ is $(n_0,L_0)$--horizontal on
        $X_t$ with anchor curve $\gamma$. 
      \item If the horizontal sides of $Q$ belong to a leaf of $\lambda_\g$
        or a horizontal segment of a curve, then any curve that crosses the
        interior of $Q$ from vertical to vertical sides is
        $(n_0,L_0)$--horizontal on $X_t$ with anchor curve $\gamma$. 
    \end{itemize}

  \end{lemma}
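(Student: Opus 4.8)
The plan is to lift the corridor $Q$ to the universal cover $\tX_t\cong\HH$ and read the data required by \defref{Shor} straight off the combinatorics of the resulting strip; as the text says, once the lift is in place there is essentially nothing more to do. Write $f\colon[0,a]\times[0,b]\to X_t$ for the defining map of $Q$, with partition $0=t_0<\dots<t_{n_0+1}=a$. Since $f$ is an embedding on the open rectangle it has a lift $\tf\colon[0,a]\times[0,b]\to\HH$ that is again embedded on the interior; put $\widetilde Q=\tf([0,a]\times[0,b])$. Each vertical segment $\tf(\{t_i\}\times[0,b])$ is a simple geodesic segment lying on a lift $\tgamma_i$ of $\gamma$, and two observations will be used throughout: (a) being a \emph{simple} sub-arc of the closed geodesic $\gamma$, the segment $\tf(\{t_i\}\times[0,b])$ has length at most $\ell_t(\gamma)\le\ep_h$; (b) each horizontal sub-side $\tf([t_i,t_{i+1}]\times\{0\})$ and $\tf([t_i,t_{i+1}]\times\{b\})$ is a geodesic segment of length at least $L_0$.

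For the first bullet, lift inside $\widetilde Q$ the arc of the leaf $\lambda$ that crosses the interior of $Q$ from one vertical side to the other, and let $\tlambda$ be the complete leaf of $\lambda_\g$ in $\HH$ containing it. Inside the topological rectangle $\widetilde Q$ this arc runs from $\tf(\{0\}\times[0,b])$ to $\tf(\{a\}\times[0,b])$, hence crosses each intermediate vertical segment; as $\tlambda$ and $\tgamma_i$ are geodesics they meet exactly once, at a point $p_i$, and the $p_i$ appear along $\tlambda$ in the order $i=1,\dots,n_0$ (so in particular the $\tgamma_i$ are pairwise distinct, $\tlambda$ meeting a given geodesic at most once). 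The horizontal side $[0,a]\times\{0\}$ maps into $\alpha$, so $\tf([0,a]\times\{0\})$ lies on a lift $\talpha$ of $\alpha$, which meets $\tgamma_i$ at $q_i:=\tf(t_i,0)$. Now $p_i$ and $q_i$ both lie on the segment $\tf(\{t_i\}\times[0,b])$, so by (a) $d_\HH(p_i,q_i)\le\ep_h$, which is (H2). For (H1): the geodesic arc $[p_i,p_{i+1}]\subseteq\tlambda$ joins the two vertical sides of the sub-corridor $\tf([t_i,t_{i+1}]\times[0,b])$, which lie on $\tgamma_i$ and $\tgamma_{i+1}$ and are joined by the horizontal geodesic $\tf([t_i,t_{i+1}]\times\{0\})$ of length $\ge L_0$ by (b); since each of those two vertical sides has length $\le\ep_h$, negligible against $L_0=10\ep_h$ by \lemref{Constants}, this yields $d_\HH(p_i,p_{i+1})\ge L_0$. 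Hence $\alpha$ is $(n_0,L_0)$--horizontal on $X_t$ with anchor curve $\gamma$.

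The second bullet is the same picture with the roles of $\alpha$ and the leaf swapped, and the estimates are only cleaner. If the horizontal sides of $Q$ lie on a leaf $\lambda$ of $\lambda_\g$, let $\tlambda$ be the lift containing $\tf([0,a]\times\{0\})$ and set $p_i:=\tf(t_i,0)$, so $p_i=\tlambda\cap\tgamma_i$ and the $\tgamma_i$ are again distinct; here $d_\HH(p_i,p_{i+1})$ is exactly the length of the geodesic segment $\tf([t_i,t_{i+1}]\times\{0\})$, hence $\ge L_0$, so (H1) holds verbatim. Now let $\beta$ be any curve crossing the interior of $Q$ from vertical to vertical side; lift its crossing arc inside $\widetilde Q$ to a lift $\tbeta$, and let $q_i$ be the unique point $\tbeta\cap\tgamma_i$. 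Since $q_i$ lies on $\tf(\{t_i\}\times[0,b])$ and $p_i=\tf(t_i,0)$ is an endpoint of that same segment, (a) gives $d_\HH(p_i,q_i)\le\ep_h$, which is (H2); so $\beta$ is $(n_0,L_0)$--horizontal with anchor $\gamma$. Finally, if the horizontal sides of $Q$ lie on a horizontal segment of a curve rather than on a leaf, that segment $\ep_h$--fellow-travels an actual leaf of $\lambda_\g$ by \defref{Shor}; substituting that leaf for the horizontal side and rerunning the argument, the extra $\ep_h$ being absorbed exactly as in the first bullet, gives the same conclusion.

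I do not expect a genuine obstacle here — this is the unwinding of definitions advertised in the text. The points deserving a moment's attention are: checking that $\tf$ is embedded on the interior, so that the $\tgamma_i$ are honestly distinct lifts of $\gamma$ (which also follows from the leaf, resp.\ the curve $\talpha$, being a single geodesic that meets each lift once); recording that the vertical segments are short purely \emph{because} they are simple sub-arcs of the $\ep_h$--short anchor curve, as this is what delivers (H2); and, for (H1) in the first bullet, the small bookkeeping that a geodesic crossing a sub-corridor from one vertical side to the other captures essentially the full length $\ge L_0$ of the horizontal side, the two short vertical sides contributing only $O(\ep_h)\ll L_0$.
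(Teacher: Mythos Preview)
Your approach is exactly what the paper intends: it states only that ``the proof is immediate by lifting the picture to the universal cover,'' and you have carried out precisely that lift and read off (H1)--(H2). The one point worth flagging is your (H1) estimate in the first bullet: the triangle inequality gives $d_\HH(p_i,p_{i+1})\ge d_\HH(q_i,q_{i+1})-2\ep_h\ge L_0-2\ep_h$, not $\ge L_0$ on the nose, and your phrase ``negligible against $L_0=10\ep_h$'' does not actually close that gap; the paper's one-line proof glosses over the same issue, so your treatment is at the same level of rigor as the original.
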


\section{Active intervals}  

  \label{Sec:AI}

  Let $\g \colon I \to \T(\s)$ be a Thurston geodesic and let $R\subseteq
  S$ be a subsurface. In this section, we introduce the notion of {\em an
  active interval} $J_R \subseteq I$ of $R$ along $\g$ with the property
  that for all $t \in J_R$, the length of $\bd R$ is uniformly bounded
  (independently of $\g, t$ and $R$) and the restriction of $X_t=\g(t)$ to
  $R$ resembles a geodesic in the \Teich space of $R$. In particular, we
  will show that, similar to the main result of \cite{LRT2}, $\g$
  restricted to $J_R$ projects to a reparametrized quasi-geodesic in
  $\calC(R)$. Furthermore, outside of $J_R$, $\g$ does not make much
  progress in $\calC(R)$.  These statements will be made precise within
  this section. Theorem \ref{thm:main} in the introduction which is the
  main result of the paper is a combination of Theorem \ref{Thm:AI} and
  Corollary \ref{Cor:Reparam} in this section. 

  Let $\ep_h$, $\ep_w$, $n_0$ and $L_0$ be the constants that we have fixed
  in the previous section (Lemma \ref{Lem:Constants}). Recall that $\bd R$
  is \emph{horizontal} on $X_t$ if a component of $\bd R$ is either weakly
  or $(n_0,L_0)$--horizontal on $X_t$ (see Definition \ref{Def:Shor}). We
  first show the following trichotomy about $\bd R$ along Thurston
  geodesics. 

  \begin{proposition}\label{Prop:Trichotomy}

    There exists a constant $\rho > 0 $ such that, for any Thurston
    geodesic $\g:I\to\calT(S)$, any subsurface $R \subseteq\s$ intersecting
    $\lambda_\g$ and any $t\in I$, one of the following three statements
    holds:
    \begin{enumerate}[label=\textup{(T\arabic*)}]
      \item[(T1)] $\ell_t(\bd R) = \max_{\alpha \in \bd R}
        \ell_t(\alpha) \leq \rho$. \label{T1} 
      \item[(T2)] $\bd R$ is horizontal on $X_t$. \label{T2}
      \item[(T3)] $d_R(X_t,\lambda_\g) \le 1$. \label{T3}
    \end{enumerate}
           
  \end{proposition}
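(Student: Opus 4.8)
The plan is to fix a threshold $\rho$ large enough relative to $\ep_B$, $\ep_h$, $w_{\ep_h}$ and the Bers/collar constants, and argue by contradiction: assume that at some time $t$ neither (T1) nor (T3) holds, and deduce that $\partial R$ must be horizontal on $X_t$, which is (T2). So suppose $\ell_t(\partial R) > \rho$ and $d_R(X_t, \lambda_\g) \ge 2$. The idea is that $d_R(X_t,\lambda_\g)$ being large forces $\lambda_\g$ to wrap a lot inside $R$ relative to the short marking of $X_t$, and combined with $\ell_t(\partial R)$ being large this will produce a long thin corridor carrying a leaf of $\lambda_\g$ along its horizontal sides and some component of $\partial R$ (or a curve forced to be $\partial R$) across its vertical sides; then Lemma \ref{Lem:Corridor2} makes that component $(n_0,L_0)$--horizontal.

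First I would use Lemma \ref{Lem:Quad}: since $\ell_t(\partial R) > \rho$, choosing $\rho > 2KL$ for the constants $K,L$ attached to a suitably small $\delta$, there is a collection of pairwise disjoint $\delta$--thin quadrilaterals in $R$ with total width at least $\ell_t(\partial R)/4$ and at least one of them $L$--wide. The long side edges of these quadrilaterals lie along $\partial R$. Next, because $d_R(X_t,\lambda_\g)$ is not bounded by $1$, the projection $\pi_R(\lambda_\g)$ and $\pi_R(\mu_{X_t})$ are far apart; in particular $\lambda_\g$ must intersect $R$ essentially (it is assumed to intersect $\lambda_\g$, but we also need a leaf genuinely crossing the relevant quadrilateral). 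I would then show that some leaf $\lambda$ of $\lambda_\g$, having a long intersection arc with $R$, must cross one of the wide $\delta$--thin quadrilaterals from side to side: a short transversal to $\partial R$ cannot, but $\lambda_\g$ projects far from any $\ep_B$--short curve, so an arc of $\lambda$ in $R$ must traverse the wide quadrilateral; and it does so many (at least $C(n_0,L_0)$) times, since the distance $d_R(X_t,\lambda_\g)$ controls $\I_R$ from below via $d_R \lmul \Log \I_R$ read backwards through Proposition \ref{Prop:RelTwist} applied with $\tau$ an arc of $\partial R$ or of the short transversal. This yields the intersection-number hypothesis needed to invoke Lemma \ref{Lem:Corridor1}: a component $\gamma$ of $\partial R$ is $\ep_h$--short? — no, $\gamma$ is \emph{long} here, which is the catch.

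The main obstacle is exactly this mismatch: the corridor machinery (Lemmas \ref{Lem:Corridor1}, \ref{Lem:Corridor2}) is phrased with an $\ep_h$--short vertical curve $\gamma$, whereas in case (T2)/(not-(T1)) the relevant curve $\partial R$ is long. The resolution I would pursue is to realize the long sides of the $\delta$--thin quadrilateral as short \emph{arcs} (the side edges have length $\le \delta < \ep_h$ by $\delta$--thinness) rather than short curves, and rerun the universal-cover argument behind Lemma \ref{Lem:Corridor2} directly: lift the $L_0$--wide $\delta$--thin quadrilateral $Q$, whose two side edges are $\delta$--close segments on lifts $\widetilde{\partial R}_1, \widetilde{\partial R}_2$ of $\partial R$, and whose top/bottom edges run along a leaf $\tlambda$ of $\lambda_\g$ for length $\ge L_0$; translating by the isometries with axes along $\partial R$, one produces the $n_0$ lifts $\{\tgamma_i\}$ of a component $\gamma$ of $\partial R$ with consecutive $\tlambda$--intersection points at distance $\ge L_0$ apart, and a lift of $\gamma$ itself (or of another component, if $\partial R$ is disconnected) crossing all of them within $\ep_h$. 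Concretely: if the two side edges of each of $n_0$ consecutive copies of $Q$ glue up along $\partial R$, the hypotheses (H1), (H2) of Definition \ref{Def:Shor} are met with anchor curve $\gamma$, giving that $\gamma$ is $(n_0,L_0)$--horizontal; one checks $\ep_h \ge \delta$ and $L_0 \le L$ hold by choosing $\delta$ small in Lemma \ref{Lem:Quad}. Care is needed to count how many disjoint $L_0$--wide subquadrilaterals one can extract from the total width $\ge \ell_t(\partial R)/4 > \rho/4$; since each has width $\ge L_0$ and there are at most $K$ of them, we need $\rho/4 \ge n_0 K L_0$-ish, which just fixes $\rho$. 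Finally, if for some reason $\partial R$ fails to be horizontal, I would trace back: that can only happen if no leaf of $\lambda_\g$ crosses any wide quadrilateral from side to side, which forces $\pi_R(\lambda_\g)$ to be within bounded distance of a boundary arc, i.e. of $\pi_R(\mu_{X_t})$ up to the $O(1)$ coming from $\ell_t(\partial R)$ being controlled — but that contradicts $d_R(X_t,\lambda_\g) \ge 2$ once $\rho$ (hence the quality of the quadrilaterals) is large enough. Thus exactly one of (T1), (T2), (T3) holds, completing the proof.
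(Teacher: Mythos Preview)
Your proposal correctly identifies the two main ingredients --- the thin-quadrilateral Lemma~\ref{Lem:Quad} and the need to produce an $(n_0,L_0)$--horizontal configuration for a component of $\partial R$ --- and you explicitly spot the real obstacle: the anchor curve in Definition~\ref{Def:Shor} must be $\ep_h$--short, while $\partial R$ is long. However, your proposed resolution does not actually overcome this obstacle. You try to take the anchor curve $\gamma$ to be a component of $\partial R$ itself, using the $\delta$--short side edges of the quadrilateral to argue that lifts of $\partial R$ come $\ep_h$--close. But condition (H1) requires the lifts $\tgamma_i$ to be lifts of an $\ep_h$--short curve, and that requirement is used essentially later (for instance in \propref{Growth} and throughout \thmref{Horizontal}); you cannot simply relax it. Moreover, geometrically your picture is off: a leaf $\lambda$ crossing the $L$--wide, $\delta$--thin quadrilateral from side to side runs nearly parallel to the top and bottom edges, hence nearly parallel to the lifts of $\partial R$ --- so $\tlambda$ will not \emph{intersect} $n_0$ distinct lifts of $\partial R$ at well--separated points, which is what (H1) demands.

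The paper's key idea, which you are missing, is to find a genuinely $\ep_h$--short curve that crosses the long quadrilateral \emph{transversally} many times. This is done in two steps. First one disposes of the case where some component of $\partial R$ intersects an $\ep_w$--short curve $\gamma$: then either $\partial R$ is weakly horizontal with anchor $\gamma$, or one can push an arc of $\lambda_\g\cap R$ off $\gamma$ to get $d_R(X_t,\lambda_\g)\le 1$. Your argument omits this weakly--horizontal branch entirely. In the remaining case $\partial R$ sits in an $\ep_w$--thick subsurface $Y$, and one invokes Lemma~\ref{Lem:Long}: any segment of $\partial R$ of length $L_{\ep_w}$ must hit a curve of the short marking $\mu_Y$. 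Subdividing the $L$--wide quadrilateral into roughly $4n_0\,\xi(S)$ pieces, each top edge is long enough to be hit by some $\gamma_i\in\mu_Y$; since $Q$ is $\delta$--thin and $\gamma_i$ is $\ep_h$--short, $\gamma_i$ must cross $Q$ from top to bottom nearby. Pigeonhole then gives a single $\ep_h$--short curve $\gamma\in\mu_Y$ with $n_0$ such crossings spaced $\ge L_0$ apart, and if $\lambda$ runs through $Q$ side to side this is exactly the $(n_0,L_0)$--horizontal configuration for $\alpha\subset\partial R$ with anchor $\gamma$. If $\lambda$ fails to cross side to side, an arc of $\lambda\cap R$ misses some $\gamma_i$ and (T3) follows.
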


  \begin{proof} 
        
    To start the proof recall that by Lemma \ref{Lem:Long}, there exists a
    constant $L_{\ep_w}>0$ such that any geodesic of length $L_{\ep_w}$ on
    an $\ep_w$--thick subsurface $Y\subseteq \s$ must intersect the short
    marking $\mu_Y$ on $Y$. Define the number
    \[ L = (4n_0\, \xi(S) + 2)\max \set{L_{\ep_w},L_0}.\] 
    Also define the numbers
    \[ \delta = \frac{2\pi|\chi(S)|}{L}, \qquad K =
    \frac{|\chi(S)|}{\sinh^2(\delta/2)}, \qquad \rho = 2KL.\] 

    Now let $\lambda$ be a leaf of $\lambda_\g$ intersecting $R$
    essentially. Suppose first that a component $\alpha \subset \bd R$
    intersects an $\ep_w$--short curve $\gamma$. If $\lambda$ does not
    intersect $\gamma$, then $d_R(X_t,\lambda_\g) \leq d_R(\gamma,\lambda)
    \le 1$, so (T3) holds. Now suppose that $\lambda$ intersects $\gamma$.
    If $\alpha$ is not weakly horizontal, then inside the standard collar
    neighborhood $U(\gamma)$ of $\gamma$, every arc of $\alpha \cap
    U(\gamma)$ must intersect every arc of $\lambda \cap U(\gamma)$. In
    this case, we can find an arc of $\omega \subseteq\lambda \cap R$, such
    that $\omega$ has at least one endpoint on an arc of $\alpha \cap
    U(\gamma)$, and then homotope $\omega$ (relative to $\bd R$) to be
    disjoint from $\gamma$. This shows $d_R(X_t,\lambda_\g)\le 1$ so again
    (T3) holds. 
    
    So we may assume that $\bd R$ does not intersect any $\ep_w$--short
    curve. Let $\alpha$ be a component of $\bd R$ with $\ell_t(\alpha) >
    \rho$. Then we can find an $\ep_w$--thick subsurface $Y$ containing
    $\alpha$ such that either $\bd Y = \emptyset$ or $\ell_t(\bd Y) \le
    \ep_w$. A short marking $\mu_Y$ on $Y$ is a subset of the short marking
    on $X$; in particular, every curve of $\mu_Y$ is $\ep_h$--short. Since
    $\ell_t(\alpha) > \rho$, by Lemma \ref{Lem:Quad}, there exists an
    $L$--wide quadrilateral $Q$ in $R$, whose top edge is a segment of
    $\alpha$ and bottom edge belongs to $\partial R$. By subdividing $Q$,
    we can find a collection of quadrilaterals $Q_1, Q_2, \ldots, Q_n
    \subseteq R$ with the following properties: 
    \begin{itemize}
      \item $n = 4n_0\, \xi(S)+2$
      \item $Q = \bigcup_{i=1}^n Q_i$, and the indices are arranged so that
        the right side of $Q_i$ is the left side of $Q_{i+1}$ for all $i
        =1, \ldots, n-1$. 
      \item The top edge of $Q_i$ is a segment of $\alpha$ with length at
        least $\max \set{L_{\ep_w},L_0}$.  
    \end{itemize}
    By the third property above, for each $i$, there exists a curve
    $\gamma_i$ in $\mu_Y$ that intersects the top edge of $Q_i$. Since all
    curves in $\mu_Y$ are $\ep_h$ short and $L_0 > \ep_h$, if the curve
    $\gamma_i$, for $i =2,\ldots, n-1$, intersects the top edge of $Q$,
    then it must cross the quadrilateral $Q_i$, and exit the bottom edge of
    $Q_j$ for some $j \in \{i-1,i,i+1\}$. That is, $\gamma_i \cap Q$ has a
    component $\delta_i$ which goes from the top edge of $Q_i$ to the
    bottom edge of $Q_j$, $j \in \{i-1,i,i+1\}$. Then, since $n=4n_0
    \,\xi(\s)+2$ and there are at most $2|\xi(S)|$ curves in $\mu_Y$, at
    least $2n_0$ of the $\delta_i$'s belong to the same curve $\gamma$ in
    $\mu_Y$. By skipping every other component if necessary, we can find
    $n_0$ arcs in $Q$ belonging to $\gamma$ with pairwise distance at least
    $L_0$. If $\lambda$ does not cross $Q$ from side to side, then there is
    an arc of $\lambda \cap R$ that misses an arc of some $\gamma_i\in
    \mu_Y$ with end point on $\bd R$, thus $d_R(X_t, \lambda_\g) \le 1$,
    which means that (T3) holds. On the other hand, if $\lambda$ crosses
    $Q$ from side to side, then $\alpha$ is $(n_0,L_0)$--horizontal with
    anchor curve $\gamma$ and hence (T2) holds. This finishes the proof of
    the trichotomy. \qedhere 

  \end{proof}
 
  Let $\rho>0$ be the constant from \propref{Trichotomy}. 

  In preparation for the next section, we now prove two technical
  propositions about when curves are horizontal in a subsurface $R$.
  
  \begin{proposition}\label{Prop:Lipschitz} 
    
    There exists an integer $N_0\geq n_0$ such that the following statement
    holds. Let $\g \colon I \to \T(\s)$ be a Thurston geodesic, $R \subseteq
    S$ a subsurface, and $X=\g(t)$ a point with $\ell_X(\bd R) \le \rho$.
    If a curve $\alpha \in \calC(R)$ is $(n,L)$--horizontal on $X$, where
    $n \ge N_0$, $L \ge L_0$, then any curve $\beta \in \calC(R)$ that
    intersects $\alpha$ at most twice is either $(n_0,L_0)$--horizontal on
    $X$ or $d_R(X,\beta) = O(1)$. 
  
  \end{proposition}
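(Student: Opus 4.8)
The plan is to dichotomize on whether $\alpha$ is short or long at $X$, and in the long case to extract an $(n_0,L_0)$--corridor from the horizontal structure of $\alpha$ and feed it through \lemref{Corridor1} and \lemref{Corridor2}. Fix $N_0\ge n_0$ together with an auxiliary threshold $C_1$, both large and chosen at the very end. First suppose $\ell_X(\alpha)\le C_1$. Since $\ell_X(\bd R)\le\rho$, a routine bounded--geometry estimate in the spirit of \lemref{Lip} — homotope $\alpha$ and the short marking $\mu_X$ off the standard collars of $\bd R$ and of the short curves of $R$ and count intersections in the remaining thick part — gives $d_R(X,\alpha)\lmul 1$. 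As $\I(\alpha,\beta)\le 2$ we get $\I_R(\alpha,\beta)\le 2$, hence $d_R(\alpha,\beta)\lmul 1$ by \eqnref{di}, and the triangle inequality \eqref{eq:triineq} yields $d_R(X,\beta)\lmul 1$: the second alternative of the proposition holds, and we are done in this case.

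Now assume $\ell_X(\alpha)>C_1$. Let $\gamma$ be an $\ep_h$--short anchor curve for $\alpha$ at $X$, let $\lambda$ be the associated leaf of $\lambda_\g$, and let $\talpha,\tlambda,\tgamma_i,p_i,q_i$ be the data of \defref{Shor}, so that $\talpha$ stays $\ep_h$--close to $\tlambda$ along $[q_1,q_n]$, consecutive $q_i$ are at least $L_0/2$ apart, and $n\ge N_0$. Since $\alpha\subseteq R$ meets $\gamma$, the curve $\gamma$ essentially intersects $R$; fix a component $\tau$ of $\gamma\cap R$. The key step is to produce an embedded simple geodesic sub--arc $\omega$ of $\alpha$ that lies in a horizontal segment of $\alpha$ and satisfies $\I_R(\tau,\omega)\ge C(n_0,L_0)$: projecting a suitable piece of $[q_1,q_n]$ to $X$ and retaining a single embedded sub--segment, $\gamma$ crosses it $\gmul\min\{N_0,\,C_1/L_0\}$ times with consecutive crossings at least $L_0/2$ apart, and since $\gamma\cap R$ has a universally bounded number of components a definite proportion of these crossings are with $\tau$, so the bound holds once $N_0$ and $C_1$ are large. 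Then \lemref{Corridor1} applied to $(\tau,\omega)$ produces an $(n_0,L_0)$--corridor $Q$ in $R$ with vertical sides on $\gamma$ and horizontal sides on $\omega\subseteq\alpha$, together with a curve $\alpha_Q\in\calC(R)$ (the curve $\alpha$ of that lemma) with $\I_R(\tau,\alpha_Q)\le C(n_0,L_0)$ and the stated ``missed corridor'' property.

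Finally I would dichotomize on $\beta$. If $\beta$ crosses the interior of $Q$ from vertical side to vertical side, then since the horizontal sides of $Q$ lie in a horizontal segment of the curve $\alpha$, the second bullet of \lemref{Corridor2} shows $\beta$ is $(n_0,L_0)$--horizontal on $X$ — the first alternative. Otherwise $\beta$ does not cross $Q$ from vertical to vertical, so $\I_R(\beta,\alpha_Q)\le\I_R(\beta,\omega)+1\le\I(\alpha,\beta)+1\le 3$, giving $d_R(\beta,\alpha_Q)\lmul 1$; also $\I_R(\tau,\alpha_Q)\le C(n_0,L_0)\lmul 1$ gives $d_R(\gamma,\alpha_Q)\lmul 1$, while the bounded--geometry estimate above applied to the $\ep_h$--short curve $\gamma$ gives $d_R(X,\gamma)\lmul 1$, so $d_R(X,\alpha_Q)\ladd d_R(X,\gamma)+d_R(\gamma,\alpha_Q)\lmul 1$ and hence $d_R(X,\beta)\lmul 1$ — the second alternative. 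Choosing $N_0\ge n_0$ and $C_1$ large enough for the crossing count completes the proof. The main obstacle is the extraction of $\omega$ in the second paragraph: when $\ell_X(\alpha)$ is large but $\alpha$ meets $\gamma$ only boundedly often, the segment $[q_1,q_n]$ must wrap many times around $\alpha$, forcing $\alpha$ to fellow travel a leaf of $\lambda_\g$ for a huge distance and hence to be essentially a closed leaf of $\lambda_\g$ (so $\pi_R(\lambda_\g)\simeq\alpha$); this degenerate configuration must be treated on its own, and it is the one place where the bookkeeping between the length of $\alpha$, the wrapping of the horizontal lift, and the topology of $\gamma\cap R$ is genuinely delicate.
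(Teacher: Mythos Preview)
Your corridor endgame (the dichotomy on whether $\beta$ crosses $Q$ from vertical side to vertical side, and the use of \lemref{Corridor1} and \lemref{Corridor2}) is exactly the paper's argument, and your bounded--geometry estimate $d_R(X,\gamma)=O(1)$ for the $\ep_h$--short anchor $\gamma$ is the same ingredient the paper uses. The difference is in the \emph{initial} dichotomy, and yours creates the very difficulty you flag while the paper's avoids it.

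The paper does not split on $\ell_X(\alpha)$. Instead it takes the innermost sub--segment $\tomega\subset[q_1,q_n)$ meeting exactly $N_0$ lifts of $\gamma$ and lets $\omega$ be its projection. Either $\omega=\alpha$, in which case $\I_R(\gamma,\alpha)\le N_0$ and one concludes $d_R(X,\beta)=O(1)$ directly via \eqnref{di} and the triangle inequality; or $\omega$ is a proper (hence embedded) arc of $\alpha$ with $\I_R(\gamma,\omega)=N_0$, and then, since $\gamma\cap R$ has at most $\ep_h/(2w_\rho)$ components, some component $\tau$ satisfies $\I_R(\tau,\omega)\ge C(n_0,L_0)$ and the corridor argument runs. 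The choice $N_0=\frac{\ep_h}{2w_\rho}\,C(n_0,L_0)$ makes this work. Your worrisome case ``$\ell_X(\alpha)$ large but $\I(\alpha,\gamma)$ bounded'' is simply the first branch of this dichotomy; no analysis of wrapping or of closed leaves of $\lambda_\g$ is needed.

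Concretely, your crossing estimate ``$\gamma$ crosses the embedded sub--segment $\gmul\min\{N_0,\,C_1/L_0\}$ times'' is not justified: nothing prevents the anchor spacing $L$ from being much larger than $L_0$, so a sub--segment of $\talpha$ of length $\ell_X(\alpha)$ may contain very few $q_i$'s even when $\ell_X(\alpha)>C_1$. Replacing your length dichotomy by the paper's $\omega=\alpha$ versus $\omega\subsetneq\alpha$ dichotomy (equivalently: $\I_R(\gamma,\alpha)\le N_0$ versus $>N_0$) removes this issue and makes the proof go through cleanly. (Minor point: consecutive $q_i$ are at least $L_0-2\ep_h$ apart along $\talpha$, not just $L_0/2$; this does not affect the argument.)
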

 
  \begin{proof}

    Let $C=C(n_0,L_0)$ be the constant from Lemma
    \ref{Lem:Corridor1} and let $N_0=\frac{\ep_h}{2w_\rho} C$. Assume that
    $\alpha$ is $(n,L)$--horizontal on $X$, where $n \ge N_0$ and $L \ge
    L_0$.

    Let $\gamma$ be an anchor curve for $\alpha$. Since $\alpha$ lies in
    $R$, $\gamma \cap R$ is non-empty. By definition, $\gamma$ is
    $\ep_h$--short, so $\I_R(X,\gamma) = O(1)$. Let $\talpha$ be an
    $(n,L)$--horizontal lift of $\alpha$. Let $q_1,\ldots,q_n$ be the
    points along $\talpha$ given by the definition of $(n,L)$--horizontal.
    Let $\tomega$ be the innermost segment of $[q_1,q_n)$ that intersects
    exactly $N_0$--lifts of $\tgamma$ and let $\omega$ be the projection of
    $\tomega$ to $X$. If $\omega = \alpha$, then $\I_R(\gamma,\alpha) \le
    N_0$. In this case, by the triangle inequality and (\ref{Eqn:di}), we obtain 
    \begin{eqnarray*} 
      d_R(X,\beta) &\ladd&
      d_R(X,\gamma)+d_R(\gamma,\alpha)+d_R(\alpha,\beta)\\ &\lmul& \Log
      \I_R(X,\gamma) + \Log \I_R(\gamma,\alpha) + d_R(\alpha,\beta) \lmul
      \Log N_0,
    \end{eqnarray*}
    which is one of the conclusions of the proposition.

    Now suppose $\omega$ is a proper arc of $\alpha$. By construction,
    $\I_R(\gamma,\omega) = N_0$. Moreover, $\ell_X(\bd R) \le \rho$, so any
    arc in $\gamma \cap R$ is at least $2 w_\rho$ long. Thus, $\gamma \cap
    R$ has at most $\ep_h/2w_\rho$ arcs, and hence there is at least one
    arc $\tau$ with $\I_R(\tau,\omega) \geq C$. By Lemma
    \ref{Lem:Corridor1}, $\tau$ and $\omega$ generates a
    $(n_0,L_0)$--corridor $Q$ in $R$. If $\beta$ crosses the interior of
    $Q$ from vertical to vertical side, then $\beta$ is
    $(n_0,L_0)$--horizontal with anchor curve $\gamma$ by
    \lemref{Corridor2}. Otherwise, let $\alpha' \in \calC(R)$ be the curve
    guaranteed by \lemref{Corridor1}. The curve $\alpha'$ intersects
    $\beta$ at most twice and $\I_R(\tau, \alpha') \le \I_R(\tau,\omega)
    \le N_0$. Then by the triangle inequality and (\ref{Eqn:di}) we obtain
     \[ d_R(X_s,\beta) \ladd d_R(X_s,\alpha')+d_R(\alpha',\beta)
      \lmul \Log \I_R(\tau,\alpha') \le \log N_0.\qedhere \]
  \end{proof}
  
  The following proposition gives us a pre-horizontal or strongly
  horizontal curve of bounded length in a subsurface $R$ along a Thurston
  geodesic, provided that $\partial R$ is sufficiently short and not
  horizontal. It is a generalization of \cite[Proposition 4.5]{LRT2} to
  subsurfaces. 

  \begin{proposition} \label{Prop:Retraction}
     
    There exists a constant $N_1\in \mathbb{N}$ such that the following
    statement holds. Let $\g \colon I \to \T(\s)$ be a Thurston geodesic
    and let $X=\g(t)$ be a point such that $\ell_X(\bd R) \le \rho$ and
    $\partial R$ is not horizontal on $X$. If $\I_R(X,\lambda_\g) \ge N_1$,
    then there exists a curve $\alpha \in \calC(R)$ of uniformly bounded length which
    is either pre-horizontal or $(n_0,L_0)$--horizontal on $X$.

  \end{proposition}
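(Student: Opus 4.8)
The plan is to produce the curve $\alpha$ by using the hypothesis $\I_R(X,\lambda_\g) \ge N_1$ to build a corridor inside $R$ generated by an arc of $\lambda_\g$ and an arc of a short curve crossing $R$, and then extract $\alpha$ from the corridor. Concretely, let $\mu_X$ be the short marking on $X$. Since $\ell_X(\bd R) \le \rho$, the subsurface $R$ contains a short marking $\mu_R$ whose pants curves and transversals are all $\ep_h$--short (using the Bers/short-marking discussion together with $\ep_h \ge \ep_B$). Since $\partial R$ is not horizontal and $\ell_X(\bd R) \le \rho$, the trichotomy of \propref{Trichotomy} forces the $(T3)$ alternative to fail once we make $N_1$ large: indeed, if $d_R(X,\lambda_\g) \le 1$ then $\I_R(X,\lambda_\g)$ is bounded by a universal constant via the $\Log$--bound \eqnref{di}, so choosing $N_1$ larger than that constant rules out $(T3)$. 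Thus $\partial R$ satisfies $(T2)$, which contradicts the hypothesis, \emph{unless} in fact $\ell_t(\bd R)>\rho$ is impossible; wait --- more carefully, the trichotomy is about $\partial R$, and here $\partial R$ is assumed not horizontal and $\ell_X(\bd R)\le\rho$, so $(T1)$ holds automatically and $(T2),(T3)$ need not. So the trichotomy gives us nothing directly; the real work is the corridor construction.

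So, first I would pick a curve $\gamma$ in $\mu_X$ that intersects $R$ essentially and realizes (up to bounded error) the subsurface coefficient $d_R(X,\lambda_\g)$; since $\gamma$ is $\ep_h$--short this is legitimate. Let $\tau$ be a component of $\gamma \cap R$ and let $\omega$ be a component of $\lambda_\g \cap R$ with $\I_R(\tau,\omega)$ comparably large, i.e.\ $\gtrsim \I_R(X,\lambda_\g)$ up to dividing by the (bounded) number of components of $\gamma\cap R$ and of the relevant arcs of $\lambda_\g\cap R$ --- here we use that $\ell_X(\bd R)\le\rho$ forces each arc of $\gamma\cap R$ to have length $\ge 2w_\rho$, hence $\gamma\cap R$ has at most $\ep_h/2w_\rho$ arcs. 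Now I would apply \lemref{Corridor1} with $(n,L)=(n_0,L_0)$ and the threshold $C=C(n_0,L_0)$: choosing $N_1$ large enough that $\I_R(\tau,\omega)\ge C(n_0,L_0)$, we obtain an $(n_0,L_0)$--corridor $Q$ in $R$ generated by $(\tau,\omega)$, whose vertical sides lie on the $\ep_h$--short curve $\gamma$ and whose horizontal sides lie on a leaf of $\lambda_\g$. By the second bullet of \lemref{Corridor2}, \emph{any} curve crossing $Q$ from vertical to vertical side is $(n_0,L_0)$--horizontal with anchor curve $\gamma$. If the curve $\alpha\in\calC(R)$ provided by the ``moreover'' clause of \lemref{Corridor1} (the one with $\I_R(\tau,\alpha)\le C(n_0,L_0)$) happens to cross $Q$ from vertical to vertical side, we are done: $\alpha$ is $(n_0,L_0)$--horizontal and its length is bounded because $\I_R(\tau,\alpha)$ is bounded and $\gamma$ is $\ep_h$--short (combine the collar estimate \lemref{ConstantA}, resp.\ a direct length bound, with \eqnref{di}).

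If instead $\alpha$ does \emph{not} cross $Q$ from vertical to vertical side, then I would argue that $\gamma$ itself --- or rather the short curve $\gamma$ viewed as a curve in $R$, i.e.\ one of its $R$--arcs completed to a curve --- together with a leaf of $\lambda_\g$ witnesses the pre-horizontal alternative: $\gamma$ is $\ep_w$--short? No: $\gamma$ is only $\ep_h$--short, not $\ep_w$--short, so I cannot immediately call it pre-horizontal. The cleaner route, mirroring \cite[Proposition 4.5]{LRT2}, is: among the $\ep_w$--short curves, either one of them intersects both $R$ and $\lambda_\g$ --- in which case it is pre-horizontal and (being $\ep_w$--short) has bounded length, and we take $\alpha$ to be (a curve in $R$ near) it --- or no $\ep_w$--short curve does, in which case one shows $R$ together with the collars of $\partial R$ admits a genuinely thick piece through which $\lambda_\g$ passes, and the corridor argument above directly produces an $(n_0,L_0)$--horizontal $\alpha$ of bounded length via \lemref{Corridor2}. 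The hypothesis $\I_R(X,\lambda_\g)\ge N_1$ with $N_1$ chosen after $C(n_0,L_0)$, after the bound on the number of arcs of $\gamma\cap R$, and after the $(T3)$--exclusion constant, makes all of these quantitative.

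\textbf{Main obstacle.} The delicate point is controlling the \emph{length} of $\alpha$, not merely its horizontality. \lemref{Corridor1} gives $\I_R(\tau,\alpha)\le C(n_0,L_0)$, a universal bound, and $\gamma$ is $\ep_h$--short; from these one wants $\ell_X(\alpha)=O(1)$. This should follow because $\alpha$ can be built from the corridor structure as a short arc of $\gamma$ (length $\le \ep_h$, since $\gamma$ is $\ep_h$--short) concatenated with a bounded number of arcs of $\omega\subset\lambda_\g$ lying in the collar region, plus $O(1)$ segments crossing the thick part --- but making ``bounded length'' precise requires either that $\alpha$ be disjoint from the long part of $\partial R$ (so it does not pick up length proportional to $\ell_X(\bd R)$) or an argument that $\alpha$ can always be chosen to avoid $\partial R$'s long excursions. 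Reconciling ``$\partial R$ may be very long'' (only $\ell_X(\bd R)\le\rho$ is assumed, and $\rho$ is a fixed constant, so actually $\bd R$ \emph{is} bounded) --- wait, $\rho$ is universal, so $\ell_X(\bd R)\le\rho$ genuinely bounds $\partial R$. Good: then every arc of $\gamma\cap R$ has length between $2w_\rho$ and $\ep_h$, the collars of $\partial R$ have bounded geometry, and a curve realizing $\I_R(\tau,\alpha)\le C(n_0,L_0)$ can be taken to have length $O(C(n_0,L_0)\cdot\ep_h)=O(1)$. So the main obstacle reduces to carefully packaging the case analysis (does $\alpha$ cross $Q$?) and verifying the constant $N_1$ dominates all the thresholds: $C(n_0,L_0)$, the $(T3)$--exclusion constant, and the arc-count $\ep_h/2w_\rho$; this bookkeeping, rather than any new geometric idea, is where the care is needed.
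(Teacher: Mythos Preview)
Your corridor approach via \lemref{Corridor1} and \lemref{Corridor2} is different from the paper's route, and as written it has two genuine gaps.

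The first gap is that the curve $\alpha$ furnished by the ``moreover'' clause of \lemref{Corridor1} is nowhere asserted to cross $Q$ from vertical side to vertical side; that clause only bounds $\I_R(\tau,\alpha)$ and controls intersections with arcs that \emph{fail} to cross $Q$ vertically. To verify that $\alpha$ fellow-travels the horizontal side $\omega\subset\lambda_\g$ through the corridor you would have to unpack its construction in \cite[Proposition~5.5]{LRT2}, at which point you are doing the surgery directly and \lemref{Corridor2} becomes superfluous. Your ``otherwise'' branch (if $\alpha$ does not cross vertically) produces no candidate curve at all.

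The second gap is the length bound: it does not follow from $\I_R(\tau,\alpha)\le C(n_0,L_0)$ and $\ell_X(\gamma)\le\ep_h$ alone. In a one-holed torus with pants curve $\gamma$, Dehn twists of the transversal about $\gamma$ all have intersection one with $\gamma$ yet arbitrarily large length. The bound has to come from an explicit construction of $\alpha$ out of pieces of bounded length, and for the piece coming from $\lambda_\g$ to be bounded you must dispose of the pre-horizontal case \emph{first}, not as an afterthought. The paper does exactly this: it fixes $\ep_\rho\le\ep_w$ with $w_{\ep_\rho}>\rho$ (so any $\ep_\rho$--short curve is disjoint from $\bd R$) and immediately handles the case of an $\ep_\rho$--short curve in $R$ meeting $\lambda_\g$; thereafter a segment $\blambda$ of a leaf of $\lambda_\g$ with $\I_R(\tau,\blambda)=N_1$ avoids all $\ep_\rho$--short curves and hence has uniformly bounded length. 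The paper then performs an explicit surgery between $\tau$ and $\blambda$, following \cite[Proposition~5.10]{LRT2} but with an additional case analysis because $\tau$ may be a genuine arc with endpoints on $\bd R$. The resulting $\alpha$ is assembled from at most two copies of $\blambda$, at most four subarcs of $\tau$, and at most two components of $\bd R$; its $(n_0,L_0)$--horizontality is read off from the fellow-traveling Lemmas~5.8--5.9 of \cite{LRT2} together with Proposition~\ref{prop : enhance to horizontal}, and the length bound is immediate from the list of pieces. If you reorganize your argument so that the pre-horizontal case comes first and then do the surgery on $\omega$ and $\tau$ directly, you recover the paper's proof; the corridor packaging adds nothing.
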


  \begin{proof}

    Let $K\geq 1$ be the additive error from Proposition \ref{prop :
    enhance to horizontal} that corresponds to the maximum of the fellow
    traveling distances in the statements of  Lemma 5.8 and Lemma 5.9 of
    \cite{LRT2}. Then, let $N= n_0\, \left\lceil \frac{L_0}{2w_{\ep_B}}
    \right\rceil +K$ and $N_1 = 5N$. We also fix a small constant $\ep_\rho
    \le \ep_w$ so that  $w_{\ep_\rho} > \rho$. Note that, by our choice
    $\ep_\rho$, any $\ep_\rho$--short curve of $X$ is disjoint from
    $\partial R$. 
    
    If there is an $\ep_\rho$--short curve in $R$ that intersects
    $\lambda_\g$, then by definition this curve is pre-horizontal curve and
    we are done. Thus, we may proceed with the case that no such curve
    exists. 

    Let $P$ be the set of $\ep_B$--short curves on $X$, and suppose there
    exists a leaf $\lambda$ of $\pi_R(\lambda_\g)$ that intersects some
    element of $\pi_R(P)$ at least $N_1$ times. Take a segment $\blambda
    \subseteq \lambda$ and an arc or curve $\tau \in \pi_R(P)$ such that
    $\blambda$ has endpoints on $\tau$, $\I_R(\tau,\blambda) = N_1$, and
    $\I_R(\tau,\blambda)$ is maximal among all other elements in
    $\pi_R(P)$. Note that $\blambda$ does not intersect any
    $\ep_\rho$--short curves, so its length is uniformly bounded. If
    $\blambda$ closes up to a curve, then we have found our
    $(n_0,L_0)$--horizontal curve. Otherwise, we will find such a curve by
    applying a surgery between $\tau$ and $\blambda$. This procedure is
    involved and will depend on the intersection pattern between $\tau$ and
    $\blambda$. The conclusion will follow if either of the following three
    cases occur.
    
    {\bf Case (1)}: There is a subsegment $\omega$ of $\blambda$ that hits
    $\tau$ on opposite sides with $\I_R(\tau,\omega) \ge N$, and the
    endpoints of $\omega$ can be joined by a segment of $\tau$ that is
    disjoint from the interior of $\omega$. 

    In this case, let $\alpha$ be the curve obtained from the concatenation
    of $\omega$ and a segment of $\tau$ disjoint from the interior of
    $\omega$. Then $\alpha$ is $(n_0,L_0)$--horizontal with anchor curve
    $\gamma$. To see this, note that by Lemma 5.8 of \cite{LRT2} $\talpha$
    a lift of $\alpha$ stays in a bounded neighborhood of $\tomega$ and
    $\teta$ lifts of $\omega$ and $\eta$. Then since $\gamma$ and $\omega$
    intersect at least $N_1$ times ($\tau\subseteq \gamma$) and $N_1>N$ at
    least $N$ lifts of $\gamma$ intersect $\tomega$. Moreover, by the
    choice of $N$, skipping some of the lifts of $\gamma$, we may choose at
    least $n_0+K$ lifts of $\gamma$ so that the distance between the
    intersection points between any two consecutive lifts and $\tomega$ is
    at least $L_0$. But then by Proposition \ref{prop : enhance to
    horizontal} there are at least $n_0$ lifts of $\gamma$ that intersect
    both $\tomega$ and $\talpha$.  This shows that $\alpha$ is
    $(n_0,L_0)$--horizontal (see the definition of horizontal).
       
    {\bf Case (2)}: There is a subsegment $\omega$ of $\blambda$ and a
    closed curve $\beta$ disjoint from $\omega$, such that $\I_R(\tau,
    \omega) \ge N$ and $\ell_X(\beta) \ge 2w_{\ep_B}$, and the endpoints of
    $\omega$ are close to the same point on $\beta$. Furthermore, each
    endpoint of $\omega$ can be joined to a nearby point on $\beta$ by a
    segment of $\tau$ that is disjoint from $\beta$ and the interior of
    $\omega$.
      
    In this case let $\alpha$ be the curve obtained by closing up $\omega$
    with $\beta$ and one or two subsegments of $\tau$. By Lemma 5.9 of
    \cite{LRT2}  $\tomega$ a lift of $\omega$ stays in a bounded
    neighborhood of $\talpha^*$ a lift of $\alpha^*$ (the geodesic
    representative of $\alpha$) and two lifts of $\beta$. Moreover, since
    $\omega$ and $\gamma$ intersect at least $5N$ times, the lifts of
    $\gamma$ intersect either $\talpha^*$ or a lift of $\beta$ at least $N$
    times. To see this, note that the lifts of $\gamma$ do not intersect
    the lifts of subsegments of $\tau$ ($\tau\subseteq \gamma$), so the
    lift of $\omega$ and two lifts of $\beta$ are intersected by the $5N$
    lifts of $\gamma$.

    In the former situation applying Proposition \ref{prop : enhance to
    horizontal} and by the choice of $N$ we can see that $\alpha$ is
    $(n_0,L_0)$--horizontal with anchor curve $\gamma$. In the later
    situation $\beta$ is $(n_0,L_0)$--horizontal by Proposition \ref{prop :
    enhance to horizontal} with anchor curve $\gamma$. In fact, in this
    case $\omega$ is twisting around $\beta$.
    

    {\bf Case (3)}: There is a subsegment $\omega$ of $\blambda$ and two
    closed curves $\beta$ and $\beta'$ disjoint from $\omega$, such that
    $\I_R(\tau, \omega) \ge N$, $\ell_X(\beta) \ge 2w_{\ep_B}$ and
    $\ell_X(\beta') \ge 2w_{\ep_B}$, and the two endpoints of $\omega$ are
    close to $\beta$ and $\beta'$. Furthermore, $\tau$ has a segment
    joining one endpoint of $\omega$ to $\beta$ and another segment joining
    the other endpoint of $\omega$ to $\beta'$, such that both segments are
    disjoint from $\beta$, $\beta'$ and the interior of $\omega$.

    In this case, let $\alpha$ be the curve obtained by gluing two copies
    of $\omega$, $\beta$, $\beta'$ and a few subsegments of $\tau$. By
    Lemma 5.9 of \cite{LRT2} $\tomega$ a lift of $\omega$ stays in a
    bounded neighborhood of $\talpha^*$ a lift of $\alpha^*$, $\tbeta$ a
    lift of $\beta$ and $\tbeta'$ a lift of $\beta'$. Then similar to case
    (2) at least $N$ lifts of $\gamma$ intersects $\talpha$, $\tbeta$ or
    $\tbeta'$. Thus again appealing to Proposition \ref{prop : enhance to
    horizontal} and by the choice of $N$ either $\alpha, \beta$ or $\beta'$
    is $(n_0,L_0)$--horizontal.

    We now show that at least one of the above three cases happens.
    
    Let $p$ and $q$ be the endpoints of $\blambda$. Consider the
    intersection points between $\blambda$ and $\tau$ ordered along $\tau$.
    If each $p$ and $q$ is adjacent to two intersection points, then this
    proof is contained in \cite[Proposition 5.10]{LRT2}. In the following,
    we will consider the case that at least one of $p$ or $q$ is not
    adjacent to two intersection points. Note that when this happens,
    $\tau$ must be an arc in $R$ and any component of $\bd R$ crossing
    $\tau$ has length at least $2 w_{\ep_B}$.
    
    First suppose that neither $p$ nor $q$ is adjacent to two intersection
    points (see Figure \ref{Fig : Surgery1}). In this case, the
    intersection points along $\tau$ line up from $p$ to $q$. Let $\beta$
    and $\beta'$ be the two (possibly non-distinct) components of $\bd R$
    containing the endpoints of $\tau$. By relabeling if necessary, assume
    $\beta$ is closest to $p$ and $\beta'$ is closest to $q$. Let $\alpha
    \in \calC(R)$ be the curve obtained by gluing two copies of $\blambda$,
    $\beta$, $\beta'$, and the subsegments of $\tau$ from $p$ to $\beta$
    and from $q$ to $\beta'$. Similar to Case (3), either $\alpha$,
    $\beta$, or $\beta'$ is $(n_0,L_0)$--horizontal. However, since by
    assumption $\bd R$ is not horizontal, it must be that $\alpha$ is the
    desired $(n_0,L_0)$--horizontal curve. 

    \begin{figure}[htp!]
      \begin{center}
      \includegraphics{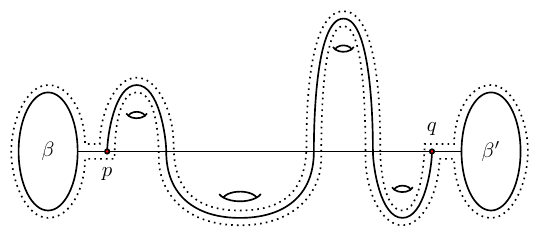}
      \caption{Neither $p$ nor $q$ is adjacent to two intersection points
        of $\bar\lambda$ and $\tau$.}
      \label{Fig : Surgery1} 
      \end{center}
     
    \end{figure} 

    Now assume exactly one of $p$ or $q$ is adjacent to two intersection
    points. Without the loss of generality, assume $p$ is adjacent to $p_1$
    and $p_2$. In this case, the intersection points along $\tau$ ends with
    $q$ at one end. After relabeling if necessary, assume $\blambda$ passes
    from $p$ to $p_1$ and then to $p_2$. (We include the possibility that
    $p_2 = q$.) Let $\omega_1$ be the subsegment of $\blambda$ from $p$ to
    $p_1$ and $\omega_2$ be the subsegment from $p_1$ to $p_2$. Among the 3
    segments $\omega_1$, $\omega_2$, and $\omega_1 \cup \omega_2$, there is
    at least one that hits $\tau$ on opposite sides. Pick the one that has
    maximal intersection number with $\tau$ and call it $\eta$. Let $\beta$
    be the closed curve obtained from closing up $\eta$ with a segment of
    $\tau$. Since $\beta$ stays close to $\eta$ by Lemma 5.8 of
    \cite{LRT2}] and was constructed from a segment with endpoints on
    $\tau$, $\ell_X(\beta) \ge 2w_{\ep_B}$. If $\I_R(\tau,\eta) \ge N$,
    then we are in case (1) with $\alpha = \beta$. If $\I_R(\tau,\eta) <
    N$, then there is a segment $\omega$ of $\blambda$ such that
    $\I_R(\tau,\omega) \ge N$, $\omega$ shares one endpoint with $\eta$,
    and the other endpoint of $\omega$ is either $p$ or $q$. If the other
    endpoint of $\omega$ is $p$, then let $\omega = \omega_1$. In this
    case, there are segments of $\tau$ connecting the endpoints of $\omega$
    to $\beta$ which are disjoint from the interiors of $\beta$ and
    $\omega$, so we can do a surgery as in case (2) to obtain the desired
    curve $\alpha$. Finally, if $q$ is an endpoint of $\omega$, then let
    $\beta'$ be the component of $\bd R$ connected to $q$ by a segment in
    $\tau$ which is disjoint from the interior of $\omega$ and $\beta'$.
    There is also a segment of $\tau$ connecting the other endpoint of
    $\omega$ with $\beta$ which is disjoint from their interior. Now we are
    in case (3), where the curve $\alpha$ is obtained by gluing two copies
    of $\blambda$, $\beta$, $\beta'$, and these subsegments of $\tau$. In
    this case, since $\bd R$ is not horizontal, either $\alpha$ or $\beta$
    is $(n_0,L_0)$--horizontal. In either case, we have found our
    $(n_0,L_0)$--horizontal curve in $R$.

    Finally, the $(n_0,L_0)$--horizontal curve $\alpha$ that we have
    constructed is a concatenation of a segment of $\blambda$ (possibly
    traversed twice), at most 4 segments of $\tau$, and at most 2
    components of $\bd R$. All of these arcs and curves involved have
    uniformly bounded length, whence so does $\alpha$. 
    This finishes the proof of the proposition. \qedhere
 
  \end{proof}

  \subsection{Active interval}

  \label{subsec:actint}

  In this subsection we introduce the notion of an active interval which is
  central to this paper. Then we will prove some results about the behavior
  of Thurston geodesics over active intervals.
  
  \begin{lemma}[Constant B]\label{Lem:ConstantB}
    Let $C=C(n_0,L_0)$ be the constant from \lemref{Corridor1}, $s_0$ the
    constant from Theorem \ref{Thm:Horizontal}, and $s_1=\log
    \frac{\rho}{2w_{\ep_h}}$. Then there is a constant $B> 3$ such that. 
  \begin{itemize}
    \item For all $R \subseteq \s$ and $\alpha,\beta\in\calC(R)$, if
      $\I_R(\alpha,\beta) \le C$, then $d_R(\alpha,\beta) \le B$.
    \item For all $X,Y \in \T(S)$ and all $R\subseteq S$, if $d(X,Y) \le
      \max \{ s_0, s_1 \}$, then $d_R(X,Y) \le B$.
    \item If $\alpha$ is $\rho$--short on $X$, then $d_S(X,\alpha) \le B$.
  \end{itemize}
  \end{lemma}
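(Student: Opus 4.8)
The plan is to obtain each of the three bullets from the basic logarithmic estimate \eqnref{di}, $d_R(\alpha,\beta)\lmul\Log\I_R(\alpha,\beta)$, together with the Collar Lemma (to convert length bounds into intersection bounds) and, for the middle bullet, the exponential bound on length growth in \thmref{Thurston}. Once a universal constant has been produced for each bullet, one simply takes $B$ to be their maximum, enlarged so that $B>3$. The first bullet is immediate: $C=C(n_0,L_0)$ is already a universal constant, so $\I_R(\alpha,\beta)\le C$ and \eqnref{di} give $d_R(\alpha,\beta)\lmul\Log C$, a universal bound.

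For the second bullet I would invoke \lemref{Lip}: the shadow map $\Upsilon_R$ is $K$--Lipschitz with $K$ universal, so $d_R(X,Y)=\diam_R\bigl(\Upsilon_R(X)\cup\Upsilon_R(Y)\bigr)$ is bounded by a universal multiple of $\dth(X,Y)$ plus a constant, and $\dth(X,Y)\le\max\{s_0,s_1\}$ with $s_0,s_1$ universal yields the claim. (If one prefers to be self-contained here: when $\dth(X,Y)\le\max\{s_0,s_1\}$, an $\ep_B$--short curve on $X$ crossing $R$ has $Y$--length at most $e^{\max\{s_0,s_1\}}\ep_B$ by \thmref{Thurston}\,(\ref{th:sup}); together with an $\ep_B$--short curve on $Y$ crossing $R$, both then have uniformly bounded $Y$--length, so the Collar Lemma bounds their intersection number inside $R$ by $O(1)$, and \eqnref{di} finishes.)

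For the third bullet, let $P$ be the base of $\mu_X$, a pants decomposition with $\ell_X(P)\le\ep_B$. If $\alpha$ is disjoint from $P$ then by maximality $\alpha$ is a component of $P$, hence $\alpha\in\pi_S(\mu_X)$ and $d_S(X,\alpha)=0$. Otherwise $\alpha$ crosses some $\beta\in P$; lifting to the annular cover of $\beta$ one checks that each component of $\alpha\cap U_X(\beta)$ crosses $\beta$ at most once, and a component that does cross $\beta$ must join the two boundary hypercycles of $U_X(\beta)$ and so has length at least $2w_{\ell_X(\beta)}\ge 2w_{\ep_B}$ (as $w_\ep$ is decreasing and $\ell_X(\beta)\le\ep_B$); since $\alpha$ is simple these components are pairwise disjoint, whence $\I(\alpha,\beta)\le\ell_X(\alpha)/(2w_{\ep_B})\le\rho/(2w_{\ep_B})$. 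Applying \eqnref{di} with $R=\s$ (where $\I_\s=\I$) gives $d_S(\alpha,\beta)\lmul\Log\bigl(\rho/(2w_{\ep_B})\bigr)$, and since $\beta\in\pi_S(\mu_X)$ we get $d_S(X,\alpha)\le d_S(\alpha,\beta)$, a universal bound. Taking $B$ to be the maximum of the three constants just obtained and $3$, then enlarging by $1$, completes the argument.

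I do not expect a genuine obstacle, as this is a bookkeeping statement assembling previously established estimates. The only point needing a little care is the third bullet (and, implicitly, the length--growth input to \lemref{Lip} used in the second): the curve $\alpha$ may be arbitrarily short, but a very short curve only has a very wide collar, which can only decrease intersection numbers, and in the estimate above the intersection count is controlled through the collar of the $\ep_B$--short curve $\beta$, whose width $w_{\ep_B}$ is a fixed positive constant.
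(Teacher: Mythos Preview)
Your proof is correct and follows essentially the same approach as the paper: all three bullets are deduced from the logarithmic estimate \eqnref{di}, with the second bullet invoking the Lipschitz property of the shadow map (\lemref{Lip}) and the third bullet bounding $\I(\alpha,\beta)\le\rho/(2w_{\ep_B})$ via the collar of the $\ep_B$--short curve. You supply more detail than the paper does (e.g.\ the explicit collar argument in the third bullet and the alternative self-contained version of the second), but the underlying ideas are identical.
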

  
  \begin{proof}
    First note that by (\ref{Eqn:di}) $d_R(\alpha,\beta)\lmul\Log
    \I_R(\alpha,\beta)$, thus the first bullet holds for a $B$ that is
    larger than a multiple of  $\Log C$. 

    For the second bullet point, recall that the shadow map $\T(S) \to
    \calC(R)$ is coarsely-Lipschitz with Lipschitz constant that only
    depends on $S$. Thus such $B$ exists for a fixed $s_1$.

    Finally, since $d(X,\alpha)\lmul \Log\I(\beta,\alpha)$ where $\beta$ is
    an $\ep_B$--short curve on $X$ and $\I(\beta,\alpha)\leq
    \frac{\rho}{2w_{\ep_B}}$, so the last bullet point holds when $B$ is
    larger than a multiple of $ \Log \frac{\rho}{2w_{\ep_B}}$. \qedhere

  \end{proof}
  
  For the rest of the paper, we will fix a constant $B>6$ as in the above
  lemma, and recall the constant $\rho$ of Proposition
  \ref{Prop:Trichotomy}. We establish the following statement.  

  \begin{lemma} \label{Lem:AI}
     
    Given a Thurston geodesic $\g:[a,b]\to\calT(S)$ and a subsurface
    $R\subseteq S$ that intersects $\lambda_\g$. For $s \in [a,b]$, if
    $d_R(X_s,\lambda_\g) \ge 2B$, then for all $t \ge s$,
    $d_R(X_t,\lambda_\g) > 1$.  

  \end{lemma}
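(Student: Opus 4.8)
The plan is to argue by contradiction using the trichotomy of Proposition~\ref{Prop:Trichotomy} together with the monotonicity of horizontality under the Thurston flow. Suppose $d_R(X_s,\lambda_\g) \ge 2B$ but there is some $t \ge s$ with $d_R(X_t,\lambda_\g) \le 1$. Since the shadow map is coarsely Lipschitz (Lemma~\ref{Lem:Lip}) and $d_R(X_s,\lambda_\g)$ is large (larger than $2B$, and in particular larger than the bound $B$ for geodesics of length $\le \max\{s_0,s_1\}$ from Lemma~\ref{Lem:ConstantB}), the time $t$ cannot be too close to $s$; more to the point, there must be an intermediate time where the behavior of $\bd R$ changes. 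The key observation is that by the triangle inequality \eqref{eq:triineq}, if (T3) held at time $s$, i.e.\ $d_R(X_s,\lambda_\g)\le 1$, we would already have a contradiction with $d_R(X_s,\lambda_\g)\ge 2B > 1$. So at time $s$ we are in case (T1) or (T2).

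The main step is to rule out case (T1) at time $s$ propagating badly. If $\ell_s(\bd R) \le \rho$ and $\partial R$ is not horizontal on $X_s$, then since $d_R(X_s,\lambda_\g)\ge 2B$ is large, we have $\I_R(X_s,\lambda_\g)$ large (it exceeds $N_1$, using that $d_R\lmul \Log\I_R$ and $B$ is large enough), so Proposition~\ref{Prop:Retraction} produces a curve $\alpha \in \calC(R)$ of uniformly bounded length that is pre-horizontal or $(n_0,L_0)$--horizontal on $X_s$. If $\alpha$ is pre-horizontal, its anchor curve is $\ep_w$--short and intersects $\lambda_\g$, and by definition of the trichotomy this forces $d_R(X_s,\lambda_\g)$ to be bounded — again contradicting $2B$ once $B$ is large. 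If $\alpha$ is $(n_0,L_0)$--horizontal on $X_s$, then by Proposition~\ref{Prop:Growth} its length grows exponentially: $\ell_t(\alpha)\ge e^{t-s}w_{\ep_h}$ for $t\ge s$. Meanwhile an $\ep_B$--short curve $\beta$ on $X_s$ crossing $R$ has $\ell_t(\beta)\le e^{t-s}\ep_B$, so by Proposition~\ref{Prop:RelTwist}, $d_R(X_s, X_t) \le d_R(\beta, X_t) \lmul \Log(\ell_t(\beta)/\ell_t(\alpha)) \le \Log(\ep_B/w_{\ep_h}) = O(1)$, and combined with $d_R(X_s,\alpha)=O(1)$ this bounds $d_R(X_t,\lambda_\g)$ away from being as small as $1$ only if... — here I must instead run the argument forward: $\alpha$ stays $(n_0,L_0)$--horizontal, so $d_R(X_t,\lambda_\g)\ladd d_R(X_t,\alpha) + d_R(\alpha,\lambda_\g)$, and both terms need to be controlled. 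The cleaner route is: once $\partial R$ is short but not horizontal and $d_R$ is large, the constructed horizontal curve $\alpha$ records the projection; the trichotomy applied at time $t$ says we are in (T1), (T2), or (T3), and one shows that (T3) at time $t$ would force $d_R(X_s,\lambda_\g)$ small via the Lipschitz/growth estimates, contradiction.

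The cleanest formulation is likely this: since $d_R(X_s,\lambda_\g)\ge 2B$, the trichotomy at $s$ gives (T1) or (T2); if (T2), then $\bd R$ horizontal on $X_s$ together with Proposition~\ref{Prop:Horizontal2} applied to the restriction $\g|_{[s,b]}$ gives $\diam_R(\g([s,b]))=O(1)$, so $d_R(X_t,\lambda_\g)\ge d_R(X_s,\lambda_\g) - O(1)$ stays large — contradiction with $\le 1$ once $B$ exceeds this $O(1)$; if (T1) and $\bd R$ not horizontal, apply Proposition~\ref{Prop:Retraction} and then Proposition~\ref{Prop:Growth} and Proposition~\ref{Prop:RelTwist} exactly as in the proof of Proposition~\ref{Prop:Horizontal2} to again conclude $\diam_R(\g([s,b]))=O(1)$, hence the same contradiction. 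The hard part is bookkeeping the constants so that "$2B$" is genuinely what is needed: I expect the main obstacle to be verifying that $B$ (as fixed via Lemma~\ref{Lem:ConstantB}) dominates all the $O(1)$ errors appearing in Propositions~\ref{Prop:Retraction}, \ref{Prop:Growth}, \ref{Prop:RelTwist}, and~\ref{Prop:Horizontal2} — this is why the statement uses $2B$ rather than $B$, leaving a margin of $B$ for these accumulated errors. I would conclude by stating that in every case $d_R(X_t,\lambda_\g) > 1$, completing the proof.
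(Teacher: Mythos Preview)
Your approach via the trichotomy is different from the paper's, and in case (T1) it has a genuine gap. In that case you attempt to conclude $\diam_R(\g([s,b]))=O(1)$ by producing a horizontal curve $\alpha\in\calC(R)$ via Proposition~\ref{Prop:Retraction} and then imitating the proof of Proposition~\ref{Prop:Horizontal2}. But the argument of Proposition~\ref{Prop:Horizontal2} works because a component of $\bd R$ is horizontal, so its length appears in the denominator of the bound in Proposition~\ref{Prop:RelTwist}; your $\alpha$ lies \emph{inside} $R$, not on $\bd R$, so Proposition~\ref{Prop:RelTwist} gives you nothing about $d_R(\beta,X_t)$ in terms of $\ell_t(\alpha)$. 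More seriously, the conclusion $\diam_R(\g([s,b]))=O(1)$ is simply false in this case: time $s$ may well be the start of the active interval of $R$, along which the shadow to $\calC(R)$ can move arbitrarily far. The lemma only asserts that the shadow stays away from $\pi_R(\lambda_\g)$, not that it stays near $\pi_R(X_s)$. Your pre-horizontal subcase is also incomplete: knowing that a short curve in $R$ intersects $\lambda_\g$ does not bound $d_R(\alpha,\lambda_\g)$.

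The paper's argument is more direct and avoids the trichotomy entirely. Assuming $d_R(X_t,\lambda_\g)\le 1$ for contradiction, the triangle inequality gives $d_R(X_s,X_t)>B$, hence $\I_R(\tau,\omega)\ge C$ for arcs $\tau\in\pi_R(P_s)$, $\omega\in\pi_R(P_t)$ of the short pants curves; moreover some $\lambda\in\pi_R(\lambda_\g)$ is disjoint from $\omega$. Lemma~\ref{Lem:Corridor1} then produces an $(n_0,L_0)$--corridor $Q$ generated by $(\tau,\omega)$. Either $\lambda$ crosses $Q$ from vertical side to vertical side, in which case the $\ep_B$--short curve $\gamma\supset\omega$ on $X_t$ is $(n_0,L_0)$--horizontal on $X_s$ (Lemma~\ref{Lem:Corridor2}), and Proposition~\ref{Prop:Growth} forces $t-s$ to be small, contradicting $d_R(X_s,X_t)>B$; or $\lambda$ misses the interior of $Q$, in which case the curve $\alpha'$ from Lemma~\ref{Lem:Corridor1} has $\I_R(\tau,\alpha')\le C$ and $\I_R(\alpha',\lambda)\le 1$, giving $d_R(X_s,\lambda_\g)\le B+3<2B$, again a contradiction. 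This is exactly tailored to the definition of $B$ in Lemma~\ref{Lem:ConstantB}, which is why $2B$ suffices.
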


  \begin{proof}

    Let $P_t$ be the set of $\ep_B$--short curves on $X_t$. Assume that
    $d_R(X_t,\lambda_\g) \le 1$. Then by the triangle inequality
    (\ref{eq:triineq}) and that $B>3$ we deduce that
    \begin{align} \label{Eqn:AI} 
      d_R(X_s,X_t) \geq d_R(X_t,\lambda_\g) - d_R(\lambda_\g,X_s)-2
      \ge 2B-3 > B.
    \end{align} 
    Thus, there are $\tau \in \pi_R(P_s)$, $\omega \in \pi_R(P_t)$, and
    $\lambda \in \pi_R(\lambda_\g)$ such that $\I_R(\tau,\omega) \ge C$,
    and $\lambda$ is disjoint from $\omega$. By Lemma \ref{Lem:Corridor1},
    $\tau$ and $\omega$ generate an $(n_0,L_0)$--corridor $Q$, whose
    horizontal sides are disjoint from $\lambda$. Thus, either $\lambda$ is
    disjoint from $Q$ or it crosses the interior of $Q$ from vertical to
    vertical sides. In the latter case, the curve $\gamma \in P_t$
    containing $\omega$ is $(n_0,L_0)$--horizontal on $X_s$ by Lemma
    \ref{Lem:Corridor2}. Since $\gamma$ is $\ep_B$--short on $X_t$, by
    Proposition \ref{Prop:Growth}, $t-s \le \log \max \left\{s_0, s_1
    \right\}$, but this implies $d_R(X_s,X_t) \le B$, contradicting
    Equation \ref{Eqn:AI}. On the other hand, if $\lambda$ is disjoint from
    $Q$, then let $\alpha$ be the curve guaranteed by Lemma
    \ref{Lem:Corridor1}, which has the property that $\I_R(\tau,\alpha) \le
    C$ and $\I_R(\alpha,\lambda) \le 1$. By the triangle inequality, this
    yields $$d_R(X_s,\lambda_\g) \leq d_R(\tau,\alpha)+
    d_R(\alpha,\lambda) +2\le B+3<2B,$$ contradicting the assumption.
    This shows $d_R(X_t,\lambda_\g) > 1$. \qedhere 

  \end{proof}

  We are now ready to give the definition of the {\em active
  interval} of a subsurface $R\subsetneq S$ along a Thurston geodesic $\g$. 
  
  \begin{definition}[Active Interval]\label{Def:AI} 
   
    Let $\g \colon [a,b] \to \T(S)$ be a Thurston geodesic and let
    $R\subseteq S$ be a subsurface that intersects $\lambda_\g$. We define
    the \emph{active interval} $J_R=[c,d] \subseteq [a,b]$ of $R$ along
    $\g$ as follows. The right endpoint of the interval is 
    \[ d := \inf \Big\{t \in [a,b] \colon \bd R \text{ is
    horizontal on } X_t \Big\}, \] 
     and if $\bd R$ is never horizontal along $[a,b]$, then we set $d=b$.
    The left endpoint of the interval is 
    \[ c := \inf \Big\{ t \in [a,d] \colon d_R(X_t,\lambda_\g) \ge 2B\Big\},
     \] 
    and if $d_R(X_t,\lambda_\g)< 2B$ for all $t\in [a,d]$, then we set $c=d$. 

  \end{definition}

  \begin{theorem} \label{Thm:AI}
   
    Given a Thurston geodesic $\g:[a,b]\to\calT(S)$ and a subsurface
    $R\subseteq S$ that intersects $\lambda_\g$, the active interval $[c,d]
    \subseteq [a,b]$ of $R$ satisfies the following properties.

    \begin{enumerate}[label=\textup{(\roman*)}]
      \item $\diam_R(\g([a,c]))$ and $\diam_R(\g([d,b]))$ are uniformly
        bounded. 
        In particular, if $d_R(X_a,X_b)$ is sufficiently large, then
        $[c,d]$ is a non-trivial interval. \label{ai1} 
      \item If $[c,d]$ is a non-trivial interval, then $\ell_t(\bd R)
        \leq \rho$ for all $t \in [c,d]$. \label{ai2}
    \end{enumerate}
  \end{theorem}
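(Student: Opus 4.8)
The plan is to treat the two intervals in part \ref{ai1} by a common two‑step recipe: first bound the shadow $\Upsilon_R\circ\g$ over the corresponding \emph{open} interval, using the definition of the active interval together with the results already established in this section; then extend the bound across the endpoints $c$ and $d$, using that $\Upsilon_R$ is coarsely Lipschitz (\lemref{Lip}), so the shadow cannot jump across a single parameter value, and (for part \ref{ai2}) that $t\mapsto\ell_t(\bd R)$ is continuous. Throughout, the hypothesis that $R$ meets $\lambda_\g$ guarantees that $d_R(X_t,\lambda_\g)$ is defined for all $t$.

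For $\diam_R(\g([a,c]))$ I would note that, by the definition of $c$, every $t\in[a,c)$ has $d_R(X_t,\lambda_\g)<2B$, so the approximate triangle inequality \eqref{eq:triineq} gives $d_R(X_s,X_t)=O(1)$ for all $s,t\in[a,c)$ and hence $\diam_R(\g([a,c']))=O(1)$ for every $c'<c$. Since $\dth(X_{c'},X_c)=c-c'$, \lemref{Lip} bounds $\diam_R(\g([c',c]))$ by $K(c-c')$, and letting $c'\to c^-$ promotes the estimate to $\diam_R(\g([a,c]))=O(1)$. For $\diam_R(\g([d,b]))$ (the case $d=b$ being trivial) I would pick, using the definition of $d$, a sequence $t_n\to d^+$ with $\bd R$ horizontal on $X_{t_n}$; then \propref{Horizontal2} applied to $\g|_{[t_n,b]}$ gives $\diam_R(\g([t_n,b]))=O(1)$ uniformly in $n$, and the same Lipschitz argument absorbs the gap $[d,t_n]$ to yield $\diam_R(\g([d,b]))=O(1)$. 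The ``in particular'' clause then follows: if $[c,d]$ is trivial, so $c=d$, then $d_R(X_a,X_b)\le\diam_R(\g([a,c]))+\diam_R(\g([d,b]))+O(1)=O(1)$, so once $d_R(X_a,X_b)$ exceeds this universal bound, $[c,d]$ must be non‑trivial.

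For part \ref{ai2}, assuming $c<d$, I would fix $t\in(c,d)$ and rule out alternatives (T2) and (T3) of the trichotomy \propref{Trichotomy}, leaving only (T1). Since $t<d$, the definition of $d$ forces $\bd R$ to be non‑horizontal on $X_t$, so (T2) fails. For (T3): because $c<d$, the set of times in $[a,d]$ with $d_R(\param,\lambda_\g)\ge 2B$ is non‑empty with infimum $c$, so choosing $s_n\to c^+$ in it and applying \lemref{AI} gives $d_R(X_r,\lambda_\g)>1$ for every $r\ge s_n$, hence for every $r>c$ and in particular at $r=t$, so (T3) fails too. Therefore (T1) holds: $\ell_t(\bd R)\le\rho$ for every $t\in(c,d)$. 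Since $(c,d)$ is non‑empty, continuity of $t\mapsto\ell_t(\bd R)$ extends this to $t=c$ and $t=d$, finishing part \ref{ai2}.

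I expect the one genuinely delicate point to be precisely this passage from an open interval to its closure: the infima defining $c$ and $d$ need not be attained, and the subsurface coefficients $d_R(\param,\param)$ are only coarsely defined and are not continuous, so transferring the interior estimates to the endpoints must go through the coarse‑Lipschitz property of $\Upsilon_R$ and the honest continuity of hyperbolic length. Everything else is bookkeeping: matching the three alternatives of \propref{Trichotomy} against the definition of the active interval, with \lemref{AI} and \propref{Horizontal2} used as black boxes.
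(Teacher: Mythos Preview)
Your proposal is correct and follows essentially the same approach as the paper: the triangle inequality through $\lambda_\g$ for $[a,c]$, \propref{Horizontal2} for $[d,b]$, and the trichotomy \propref{Trichotomy} combined with \lemref{AI} for part~\ref{ai2}. The only difference is that you are more scrupulous about the endpoints $c$ and $d$ (the paper simply writes ``$d_R(X_c,\lambda_\g)\ge 2B$'' and ``$\bd R$ is horizontal on $X_d$'', implicitly treating the infima as attained), and your sequence-plus-Lipschitz arguments handle this correctly; note, however, that since $\Upsilon_R$ is only \emph{coarsely} Lipschitz, the limit $c'\to c^-$ should be read as ``take $c'$ within bounded distance of $c$'' rather than a genuine limit.
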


  \begin{proof}

    For all $s,t \in [a,c]$, by the choice of $c$ and the triangle
    inequality (\ref{eq:triineq}) we have
    \[d_R(X_s,X_t) \ladd d_R(X_s,\lambda_\g) +
    d_R(\lambda_\g,X_t) \le 6B,\] so $\diam_R(\g([a,c]) = O(1)$. Moreover,
    if $d \leq b$, then $\bd R$ is horizontal on $X_d$ by definition. Then
    by \propref{Horizontal2} we have $\diam_R(\g([d,b]) = O(1)$, thus
    \ref{ai1} is proved.
    
    For property \ref{ai2}, if $c < d$, then by our choice of $c$,
    $d_R(X_c,\lambda_\g) \ge 2B$. By Lemma \ref{Lem:AI} $d_R(X_t,\lambda_\g)
    > 1$, for all $t \in [c,d]$. Thus, by Proposition
    \ref{Prop:Trichotomy}, either $\partial R$ is horizontal on $X_t$ or
    $\ell_t(\partial R) \le \rho$. But by our choice of $d$, we must have
    $\ell_t(\partial R) \le \rho$ for all $t \in [c,d)$. The latter bound
    extends to $t=d$ by continuity. \qedhere  
    
  \end{proof}

  \subsection{No backtracking along active intervals} 

  We now define the {\em balanced time of a curve in a subsurface
  $R\subseteq S$} which is a time in the active interval of $R$ which helps
  us to define a contraction map from $\calC(R)$ to the shadow of a
  Thurston geodesic in $\calC(R)$.
  
  \begin{definition}[Balanced time of a curve]\label{def:balancedtime} 
    
    Let $\g \colon I \to \T(\s)$ be a Thurston geodesic and let $R$ be a
    subsurface intersecting $\lambda_\g$. Also, let $J_R=[c,d]$ be the
    active interval of $R$ along $\g$. Then, for a curve $\alpha \in
    \calC(R)$, the \emph{balanced time} of $\alpha$ in $[c,d]$ is the time
    \[ t_\alpha := \inf \Big\{ t \in [c,d] \colon \alpha \text { is }
    (n_0,L_0)\text{--horizontal on } X_t\Big\}. \] 
     
    But, if $\alpha$ is never $(n_0,L_0)$--horizontal along $[c,d]$, then
    set $t_\alpha = d$. 
  
  \end{definition}
  
  Now we define a coarse map $$\Pi \colon \calC(R)\to
  \Upsilon_R(\g([c,d]))$$ as follows. For any curve $\alpha \in \calC(R)$,
  let  $\Pi(\alpha) = \Upsilon_R(\g(t_\alpha))$, where $t_\alpha$ is the
  balanced time of $\alpha$ along \g. When $\tau$ is an arc in $R$, then
  set $\Pi(\tau) = \Pi(\alpha)$, where $\alpha$ is any disjoint curve from
  $\tau$ in $\calC(R)$. 
  
  Our main theorem of this section is the following.

  \begin{theorem}\label{Thm:LipRetract} 
    
    Let $\g \colon [a,b] \to \T(\s)$ be a Thurston geodesic and let $[c,d]
    \subseteq [a,b]$ be the active interval of a subsurface $R$ along $\g$.
    Then $\Pi \colon \calC(R)\to \Upsilon_R(\g([c,d]))$ is a $K$--Lipschitz
    retraction, where $K$ depends only on \s. More precisely, 
    \begin{itemize}
      \item $\Pi$ is Lipschitz: Given two curves $\alpha, \beta \in
        \calC(R)$ that intersect at most twice, \[
          d_R(X_{t_\alpha},X_{t_\beta}) \le K. \] In particular, $\Pi$ is
        coarsely well-defined, i.e.\ when $\tau$ is an arc, then
        $\Pi(\tau)$ is independent of the choice of a  curve in $\calC(R)$
        disjoint from $\tau$.
      \item $\Pi$ is a retraction: For any $t \in [c,d]$ and any curve $\alpha
        \in \calC(R)$ disjoint from any element of $\pi_R(X_t)$, we have
        $d_R(X_t,X_{t_\alpha}) \le K$. 
    \end{itemize}

  \end{theorem}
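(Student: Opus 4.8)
The plan is to establish the two bullet points in turn, after two reductions. First, if the active interval $[c,d]$ is degenerate ($c=d$) then $t_\alpha=c$ for every $\alpha\in\calC(R)$ and both statements are trivial; so assume $c<d$, in which case $\ell_t(\bd R)\le\rho$ for all $t\in[c,d]$ by Theorem~\ref{Thm:AI}(ii), and this is exactly the hypothesis needed to apply Propositions~\ref{Prop:Lipschitz} and~\ref{Prop:Retraction} at any point of $[c,d]$. Second, I record two soft facts used throughout. (a) The function $t\mapsto d_R(X_{t_0},X_t)$ is coarsely $1$--Lipschitz in $t$, by the triangle inequality \eqref{eq:triineq} and the Lipschitz property of the shadow map (Lemma~\ref{Lem:Lip}); consequently, if $t^\ast$ is the first time $d_R(X_{t_0},X_{t^\ast})$ reaches a prescribed threshold $D$, then $d_R(X_{t_0},X_{t^\ast})\eadd D$. (b) By Lemma~\ref{Lem:ConstantB}, $d_R(X_s,X_t)\le B$ whenever $|s-t|\le\max\{s_0,s_1\}$. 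I will prove the Lipschitz statement first and then use it to prove the retraction statement; the first proof does not use the second.

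\emph{The Lipschitz bound.} Let $\alpha,\beta\in\calC(R)$ intersect at most twice, and assume $t_\alpha\le t_\beta$. If $t_\beta-t_\alpha\le\max\{s_0,s_1\}$, fact (b) finishes; otherwise $t_\alpha<d$, so $\alpha$ is $(n_0,L_0)$--horizontal on $X_{t_\alpha}$ and stays horizontal thereafter by Theorem~\ref{Thm:Horizontal}(1). Choose a universal threshold $D_0>B$ large enough that, by Theorem~\ref{Thm:Horizontal}(2), $d_R(X_{t_\alpha},X_t)\ge D_0$ forces $\alpha$ to be $(n,L)$--horizontal on $X_t$ with $n$ so large that $n$ remains $\ge N_0$ even after the multiplicative loss of Theorem~\ref{Thm:Horizontal}(1), where $N_0$ is the constant of Proposition~\ref{Prop:Lipschitz}. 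If $d_R(X_{t_\alpha},X_t)<D_0$ for all $t\in[t_\alpha,d]$, then $d_R(X_{t_\alpha},X_{t_\beta})<D_0$ and we are done. Otherwise let $t^\ast$ be the first time in $[t_\alpha,d]$ that this distance reaches $D_0$; then $d_R(X_{t_\alpha},X_{t^\ast})\eadd D_0$ by (a), $t^\ast\ge t_\alpha+s_0$ automatically (since $D_0>B$ and (b)), and $\alpha$ is suitably horizontal on $X_{t^\ast}$. Apply Proposition~\ref{Prop:Lipschitz} to $\alpha,\beta$ at $X_{t^\ast}$: either $\beta$ is $(n_0,L_0)$--horizontal on $X_{t^\ast}$, forcing $t_\beta\le t^\ast$; or $d_R(X_{t^\ast},\beta)=O(1)$. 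In either subcase, if $t_\beta\le t^\ast$ then $t_\beta\in[t_\alpha,t^\ast]$, and the defining property of $t^\ast$ together with (b) gives $d_R(X_{t_\alpha},X_{t_\beta})=O(1)$. If instead $t_\beta>t^\ast$ (so we are in the second subcase), then $\alpha$ remains $(n,L)$--horizontal with $n\ge N_0$, $L\ge L_0$ and $\ell_s(\bd R)\le\rho$ for every $s\in[t^\ast+s_0,t_\beta)$, while $\beta$ is not $(n_0,L_0)$--horizontal there, so Proposition~\ref{Prop:Lipschitz} forces $d_R(X_s,\beta)=O(1)$ on that interval; letting $s\to t_\beta$ and using (a),(b) gives $d_R(X_{t^\ast},X_{t_\beta})=O(1)$, hence $d_R(X_{t_\alpha},X_{t_\beta})=O(1)$. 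In every case $d_R(X_{t_\alpha},X_{t_\beta})\le K$ for a universal $K$.

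\emph{The retraction bound.} Let $t\in[c,d]$ and let $\alpha\in\calC(R)$ be disjoint from an element of $\pi_R(X_t)$, i.e.\ $d_R(X_t,\alpha)=O(1)$. Using the Lipschitz bound, we may replace $\alpha$ by a curve $\alpha^\circ\in\calC(R)$ crossing $R$, of uniformly bounded $X_t$--length, meeting $\alpha$ at most twice (closing up along $\bd R$ when the relevant element of $\pi_R(X_t)$ is an arc rather than a curve); so assume $\ell_t(\alpha)$ is uniformly bounded. If $\alpha$ is disjoint from $\lambda_\g$, then $\alpha$ can never be $(n_0,L_0)$--horizontal (a geodesic disjoint from a leaf of $\lambda_\g$ cannot stay $\ep_h$--close to it along a segment of length $\gtrsim (n_0-1)L_0$), so $t_\alpha=d$; here $d_R(X_t,\lambda_\g)=O(1)$, and an argument paralleling Lemma~\ref{Lem:AI}—if $d_R(\cdot,\lambda_\g)$ were large at some later time, a corridor between short curves would make $\bd R$ horizontal before $d$—shows $d_R(X_t,X_d)=O(1)$. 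So assume $\alpha$ crosses $\lambda_\g$. If $t\ge t_\alpha$, then $t_\alpha<d$ and $\alpha$ is $(n_0,L_0)$--horizontal on $X_{t_\alpha}$, so Proposition~\ref{Prop:Growth} gives $\ell_t(\alpha)\ge e^{t-t_\alpha}w_{\ep_h}$; since $\ell_t(\alpha)$ is bounded, $t-t_\alpha=O(1)$, whence $d_R(X_t,X_{t_\alpha})=O(1)$ by Lemma~\ref{Lem:Lip}.

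\emph{The main case, $t<t_\alpha$.} This is where I expect the real work. Here $\bd R$ is not horizontal on $[c,t_\alpha)$, so we may apply Proposition~\ref{Prop:Retraction} at $X_t$ (when $\I_R(X_t,\lambda_\g)\ge N_1$; otherwise $d_R(X_t,\lambda_\g)=O(1)$ and we proceed as in the disjoint case) to obtain a uniformly bounded-length curve $\alpha_t\in\calC(R)$ that is pre-horizontal or $(n_0,L_0)$--horizontal on $X_t$. If $\alpha_t$ is $(n_0,L_0)$--horizontal on $X_t$, then $t_{\alpha_t}\le t$ and Proposition~\ref{Prop:Growth} (with $\ell_t(\alpha_t)$ bounded) bounds $t-t_{\alpha_t}$, so $d_R(X_t,X_{t_{\alpha_t}})=O(1)$. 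If $\alpha_t$ is merely pre-horizontal, then $\alpha_t$ is $\ep_w$--short—hence $\le\ep_0$--short by Lemma~\ref{Lem:Constants}(\ref{ew})—and crosses $\lambda_\g$; running Proposition~\ref{Prop:CollarGrowth}, Lemma~\ref{Lem:ConstantA}, Proposition~\ref{Prop:Horizontal1} and property~(\ref{L04}) of Lemma~\ref{Lem:Constants} as in the proof of Theorem~\ref{Thm:Whor} shows $\alpha_t$ is $(n_0,L_0)$--horizontal at the first time $s$ its length reaches $1$, while $\alpha_t$ stays $\ep_B$--short on $[t,s]$, so again $d_R(X_t,X_{t_{\alpha_t}})=O(1)$. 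Finally one transfers from $\alpha_t$ to $\alpha$: arranging (as one may, since $\alpha$ is now bounded-length on $X_t$) that the surgery of Proposition~\ref{Prop:Retraction} produces $\alpha_t$ with $\I_R(\alpha,\alpha_t)=O(1)$, the Lipschitz bound gives $d_R(X_{t_\alpha},X_{t_{\alpha_t}})=O(1)$, and the triangle inequality completes the proof. The single hardest point is this last case—controlling the shadow of $\g$ in $\calC(R)$ before the balanced time of $\alpha$, i.e.\ showing the geodesic has not yet run past $\alpha$—which is precisely the content of Proposition~\ref{Prop:Retraction} together with the no-backtracking and trichotomy machinery of Section~\ref{Sec:AI}, mirroring the delicate step in Rafi's analysis of \Teich geodesics.
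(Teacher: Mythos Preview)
Your Lipschitz argument is essentially the paper's: you threshold on $d_R(X_{t_\alpha},X_t)$ reaching $D_0$, the paper thresholds on $\alpha$ first becoming $(N_0,L_0)$--horizontal; by \thmref{Horizontal}(2) these are coarsely the same time, and the rest is identical.

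The retraction argument, however, has a real gap in the handling of the case $\I_R(X_t,\lambda_\g)<N_1$. You try to shunt this into a ``disjoint from $\lambda_\g$'' case, but that case is itself broken in two ways. First, the claim that a curve $\alpha$ disjoint from $\lambda_\g$ can never be $(n_0,L_0)$--horizontal is false: two disjoint geodesics in $\HH$ can be $\ep_h$--close along an arbitrarily long segment (just take their minimum distance small enough), so a lift $\talpha$ disjoint from $\tlambda$ can still satisfy (H1)--(H2). Second, and more seriously, the assertion ``$d_R(X_t,X_d)=O(1)$ by an argument paralleling \lemref{AI}'' is not justified: \lemref{AI} only says that once $d_R(\cdot,\lambda_\g)\ge 2B$ it stays $>1$, which gives no upper bound on $d_R(X_t,X_d)$. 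And even if it did, when $\I_R(X_t,\lambda_\g)<N_1$ with $\alpha$ \emph{not} disjoint from $\lambda_\g$, you have no reason to expect $t_\alpha=d$, so bounding $d_R(X_t,X_d)$ would not finish.

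The paper sidesteps all of this by applying \propref{Retraction} not at $X_t$ but at the \emph{first later time} $s\in[t,d]$ with $\I_R(X_s,\lambda_\g)\ge N_1$. On $[t,s)$ every short marking has small intersection with $\lambda_\g$ in $R$, hence $d_R(X_u,\lambda_\g)\lmul\Log N_1$ for all $u\in[t,s)$, and the triangle inequality bounds $\diam_R(\g([t,s]))$. At $X_s$ one then produces the bounded-length (pre-)horizontal curve $\beta$ and argues $d_R(X_s,X_{t_\beta})=O(1)$ exactly as you do. Finally, the transfer from $\beta$ to $\alpha$ is via $d_R(\alpha,\beta)=O(1)$ (triangle inequality through $X_t$ and $X_s$, both close to their respective bounded-length curves), \emph{not} via $\I_R(\alpha,\beta)=O(1)$: bounded length on $X_s$ does not bound intersection number when there are very short curves in $R$, but it does bound intersection with the $\ep_B$--short marking, hence $d_R(X_s,\beta)=O(1)$. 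Once $d_R(\alpha,\beta)=O(1)$, iterate the Lipschitz bound along a $\calC(R)$--geodesic.
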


  \begin{proof}
    
    All statements hold trivially if $c=d$, so we will assume $c < d$. Then
    by part \ref{ai2} of \thmref{AI}, $\ell_t(\bd R) \le \rho$ for all $t
    \in [c,d]$.
    
    We first show that the map $\Pi$ is Lipschitz. Let $\alpha, \beta \in
    \calC(R)$ be two curves with $d_R(\alpha,\beta)=1$. Without a loss of
    generality, we may assume that $t_\alpha < t_\beta$. Let $N_0$ be the
    constant of \propref{Lipschitz} and let $s \in [t_\alpha,d]$ be the
    first time that $\alpha$ is $(N_0,L_0)$--horizontal; if no such time
    exists then set $s=d$. Then since $\alpha$ is $(n_0,L_0)$--horizontal
    at $X_t$ from the second part of \thmref{Horizontal} we can deduce that
    $\diam_R(\g([t_\alpha,s]))\ladd \log(\frac{N_0}{n_0})$. So if $t_\beta
    \in [t_\alpha,s]$, then we are done. If $t_\beta>s$, then it is enough
    to show that $d_R(X_s,X_{t_\beta}) = O(1)$. By definition of balanced
    time, $\beta$ is not $(n_0,L_0)$--horizontal along $[s,t_\beta)$. Now
    for any $t \in [s,t_\beta)$, if $\alpha$ remains
    $(N_0,L_0)$--horizontal on $X_t$ , then since the length of $\bd R$ is
    at most $\rho$ on $X_s$ and $X_t$ and $\alpha$ is
    $(N_0,L_0)$--horizontal on $X_s$ and $X_t$, we may apply
    \propref{Lipschitz} to get $$ d_R(X_s,X_t) \ladd d_R(X_s,\beta) +
    d_R(\beta,X_t) = O(1). $$ But, if $\alpha$ is not
    $(N_0,L_0)$--horizontal on $X_t$, then again by the second part of
    \thmref{Horizontal},  $t-s \ladd 0$. In either case, we have our
    desired bound for $d_R(X_t,X_s)$. 
   
    Now we show $\Pi$ is a retraction. Let $N_1$ be the constant of
    \propref{Retraction}. For any $t \in [c,d]$, let $P_t$ be the set of
    $\ep_B$--short curves on $X_t$. Note that there is a curve $\alpha$ of
    bounded length and disjoint from an element of $\pi_R(P_t)$; if
    $\pi_R(P_t)$ contains a curve, then let $\alpha$ be the curve.
    Otherwise, take an arc $\tau \in \pi_R(P_t)$ and do a surgery with $\bd
    R$ to get a curve $\alpha$ disjoint from $\tau$ of length at most
    $2(\rho+\ep_B)$. Since $\Pi$ is Lipschitz, to show that $\Pi$ is a
    retraction it suffices to show that $d_R(X_t,X_{t_\alpha}) = O(1)$.
  
    If $t_\alpha < t$, then in particular $t_\alpha <d$. By definition of
    the balanced time, $\alpha$ is $(n_0,L_0)$--horizontal on
    $X_{t_\alpha}$, so $\alpha$ crosses an $\ep_h$--short curve on
    $X_{t_\alpha}$ and hence $\ell_{t_\alpha}(\alpha) \ge 2w_{\ep_h}$.
    Then, since $\alpha$ has length at most $2(\rho+\ep_B)$ on $X_t$, by
    \propref{Growth}, $t-t_{\alpha}\leq \log \frac{\rho+\ep_B}{w_{\ep_h}}$.
    This shows that $d_R(X_t,X_{t_\alpha}) = O(1)$ when $t_\alpha < t$. 
    
    Now assume that $t_\alpha > t$. Let $s \in [t,d]$ be the first time
    that $\I_R(X_s,\lambda_\g) \ge N_1$; set $s = d$ if no such time
    exists. It is immediate from (\ref{Eqn:di}) that $\diam_R(\g([t,s]))
    \lmul \Log N_1$, so we would be done if $t_\alpha \in [t,s]$. Thus,
    assume that $t_\alpha \in (s,d]$. By \propref{Retraction}, there exists
    a bounded length curve $\beta \in \calC(R)$ which is either
    pre-horizontal or $(n_0,L_0)$--horizontal on $X_s$. In either case,
    $d_R(X_{t_\beta},X_s) = O(1)$. Indeed, if $\beta$ is pre-horizontal,
    then let $s_\beta > s$ be the first moment when $\ell_{s_\beta}(\beta)
    = 1$; set $s_\beta = d$ if no such time exists. Of course,
    $\diam_R(\g[s,s_\beta])=O(1)$. But $t_\beta  \in [s,s_\beta]$ by
    Proposition \ref{Prop:Growth} and Theorem \ref{Thm:Whor}, so
    $d_R(X_{t_\beta},X_s) = O(1)$. On the other hand, if $\beta$ is
    $(n_0,L_0)$--horizontal, then its length grows essentially
    exponentially from its balanced time $t_\beta < s$ onward by
    Proposition \ref{Prop:Growth}. The conclusion now follows from the fact
    that $\ell_s(\beta)$ is bounded. To proceed, we note that
    $d_R(X_s,\beta) = O(1)$, and recall that $\alpha$ is $1$--close to
    $\pi_R(P_t)$ and $\diam_R(\g([t,s]))$ is bounded. Thus, by the triangle
    inequality,
    \[ d_R(\alpha,\beta) \ladd d_R(\alpha,X_t) + d_R(X_t,X_s) +
    d_R(X_s,\beta) = O(1). \] As $\Pi$ is Lipschitz by the first part of
    the Theorem, the 
    inequality above implies that $$d_R(X_{t_\alpha},X_{t_\beta}) = O(1).$$ 
    A final application of the triangle
    inequality now yields 
    \[ d_R(X_{t_\alpha},X_t) \le d_R(X_{t_\alpha},X_{t_\beta}) +
    d_R(X_{t_\beta},X_s) + d_R(X_s,X_t) = O(1).\qedhere\]
    
  \end{proof}
 
  In light of \thmref{Reparam}, we obtain the following corollary.

  \begin{corollary} \label{Cor:Reparam}
    
    Let $\g \colon [a,b] \to \T(\s)$ and let $[c,d] \subseteq [a,b]$ be the
    active interval of a subsurface $R$ along $\g$. Then
    $\Upsilon_R(\g([c,d])$, the shadow of $\g|_{[c,d]}$ to $\calC(R)$,  is
    a reparametrized quasi-geodesic in $\calC(R)$. 

  \end{corollary}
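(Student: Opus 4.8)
The plan is to obtain \corref{Reparam} directly by combining \thmref{LipRetract} with the abstract coarse-geometry criterion of \thmref{Reparam}. Take $\calX = \calC(R)$, which is Gromov hyperbolic by \cite{mm1}, and let $\phi = \Upsilon_R \circ \g|_{[c,d]} \colon [c,d] \to \calC(R)$ be the shadow of $\g$ restricted to the active interval. First I would note that $\phi$ is a path in the sense required by \thmref{Reparam}: $\Upsilon_R(X) = \pi_R(\mu_X)$ has diameter at most $2$ for each $X$, so $\phi$ is a coarse map, and in fact $\diam_R\!\big(\phi(s)\cup\phi(t)\big)\le K|t-s|$ for $s\le t$, since $\Upsilon_R$ is $K$--Lipschitz by \lemref{Lip} and $\dth(\g(s),\g(t)) = t-s$. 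By construction $\text{Im}(\phi) = \Upsilon_R(\g([c,d]))$, which is exactly the target subset appearing in \thmref{LipRetract}.

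Next, \thmref{LipRetract} provides a coarse $K'$--Lipschitz retraction $\Pi \colon \calC(R) \to \Upsilon_R(\g([c,d])) = \text{Im}(\phi)$ with $K'$ depending only on \s: its first bullet is the Lipschitz estimate, its second bullet is the retraction estimate, and it also records that $\Pi$ is coarsely well-defined. Note there is nothing to reconcile regarding the hypotheses of \thmref{Reparam}, since the target of $\Pi$ is by design precisely the image of the path $\phi$.

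Finally, applying \thmref{Reparam} to the path $\phi$ together with the retraction $\Pi$ yields that $\phi$ is a reparametrized $K''$--quasi-geodesic in $\calC(R)$ for some $K''$ depending only on $K'$, and hence only on \s; this is precisely the assertion of the corollary. The sole degenerate case is $c=d$, where the active interval is a single point and the statement is vacuous. I do not expect any real obstacle here: every substantive ingredient has already been established in \thmref{LipRetract}, and the remaining content is just the bookkeeping of verifying that $\phi$ is a path and invoking \thmref{Reparam}.
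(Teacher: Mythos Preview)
Your proposal is correct and follows exactly the approach the paper takes: the paper simply states that the corollary follows ``in light of \thmref{Reparam}'' from \thmref{LipRetract}, and you have spelled out the routine verification that $\phi=\Upsilon_R\circ\g|_{[c,d]}$ is a path and that $\Pi$ is the required Lipschitz retraction onto its image.
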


  \begin{remark} \label{BalancedTime}
   
    Let $\g \colon [a,b] \to \T(\s)$ be a Thurston geodesic, and recall
    that for any curve $\alpha \in \calC(R)$, our definition of the
    balanced time $t_\alpha$ is the first time that $\alpha$ is
    $(n_0,L_0)$--horizontal in the active interval $[c,d]$ of $R$ along
    $\g$. However we can also define $t_\alpha'$ to be the first time
    $\alpha$ is $(n_0,L_0)$--horizontal in $[a,b]$ (see also \cite[\S
    5.1]{LRT2}). The two times are equivalent from the point of view of
    coarse geometry of $\calC(R)$. More precisely, we claim
    $d_R(X_{t_\alpha},X_{t_\alpha'})$ is uniformly bounded. To see this,
    note that by definition, if $t_\alpha \ne t_\alpha'$, then either
    $t_\alpha' < c$ or $t_\alpha' > d$. If $t_\alpha' < c$ and $t_\alpha =
    c$, then the conclusion follows from \thmref{AI}\ref{ai1}. If
    $t_\alpha' < c$ and $t_\alpha > c$, then the conclusion follows by
    \thmref{Horizontal}. Finally, if $t_\alpha' > d$, then $t_\alpha = d$
    by definition. In this case, the conclusion again follows by
    \thmref{AI}\ref{ai1}. Hence, from the point of view of the coarse
    geometry of $\calC(R)$, there is not any difference between the two
    definitions of the balanced time.
    
    Finally, note that by \thmref{AI}, $\diam_R(\g([a,c]))$ and
    $\diam_R(\g([d,b]))$ are bounded which imply that
    $\Upsilon_R(\g([a,b]))$ is also a reparametrized quasi-geodesic in
    $\calC(R)$.

  \end{remark}

  \subsection{Nearly filling}

   In this final section after introducing the notion of a {\em nearly
   filling lamination}, we show that when the maximal stretch lamination of
   a Thurston geodesic segment is nearly filling at the initial point of
   the segment, then the shadow of the geodesic is a reparametrized
   quasi-geodesic in the curve complex of any subsurface that intersects
   the stretch lamination.

  \begin{definition}[Nearly filling] \label{Def:Nearly-filling}
   Let $\g \colon I \to \T(\s)$ be a Thurston geodesic, then we say that
    $\lambda_\g$ is \emph{nearly filling} on $X_t=\g(t)$ if the inequality
    $d_\s(X_t,\lambda_\g) \ge 4B$ holds.
   \end{definition}

  \begin{theorem} \label{Thm:Filling}

    Let $\g \from [a,b] \to \T(\s)$ be a Thurston geodesic and suppose that
    $\lambda_\g$ is nearly filling on $X_a$. Then for all $t \ge  a$, any
    curve $\alpha$ disjoint from $\lambda_\g$ is horizontal on $X_t$.
    
  \end{theorem}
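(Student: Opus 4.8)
First I would record two reductions. Since $\alpha$ is disjoint from $\lambda_\g$, in the annular cover of $\s$ associated to any curve $\gamma$ the lifts of $\alpha$ and of $\lambda_\g$ can be chosen disjoint, so $d_\gamma(\alpha,\lambda_\g)\le 1$ for every curve $\gamma$. Hence the relative-twisting condition built into \defref{Whor} is automatic for $\alpha$, and to prove $\alpha$ is horizontal on $X_t=\g(t)$ it suffices to produce an $\ep_w$--short curve on $X_t$ meeting both $\alpha$ and $\lambda_\g$ (which makes $\alpha$ weakly horizontal), or else to show directly that $\alpha$ is $(n_0,L_0)$--horizontal on $X_t$. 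For the second reduction, the hypothesis ``$\lambda_\g$ nearly filling on $X_a$'' means $d_\s(X_a,\lambda_\g)\ge 4B$ by \defref{Nearly-filling}, so \lemref{AI} with $R=\s$ gives $d_\s(X_t,\lambda_\g)>1$ for all $t\in[a,b]$. From this I would extract: (i)~every curve in the base of $\mu_{X_t}$ crosses $\lambda_\g$, since a base curve disjoint from $\lambda_\g$ would realize $d_\s(X_t,\lambda_\g)\le 1$; and (ii)~taking $\ep_w$ small enough that $2w_{\ep_w}>\ep_B$ (so that no $\ep_B$--short curve crosses an $\ep_w$--short curve), every $\ep_w$--short curve of $X_t$ lies in the base of $\mu_{X_t}$. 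Together, no $\ep_w$--short curve of $X_t$ is disjoint from $\lambda_\g$; in particular $\ell_t(\alpha)>\ep_w$ for all $t\ge a$, which rules out the only configuration in which $\alpha$ could fail to be horizontal for a trivial length reason.

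The core of the proof is then a case analysis. Fix $t\ge a$, let $R_0$ be the component of $\s$ cut along $\alpha$ that meets $\lambda_\g$, and observe that $\bd R_0$ consists of one or two copies of $\alpha$ (together with punctures of $\s$), so ``$\bd R_0$ is horizontal on $X_t$'' is the same as ``$\alpha$ is horizontal on $X_t$''. Since $R_0$ intersects $\lambda_\g$, I would apply \propref{Trichotomy} to $R_0$ at time $t$. In case (T2) we are done. In case (T3) we have $d_{R_0}(X_t,\lambda_\g)\le 1$; the base curve $\beta$ of $\mu_{X_t}$ witnessing this cannot be contained in $R_0$, for then it would be disjoint from $\alpha$ and, since $\alpha\in\pi_\s(\lambda_\g)$, this would force $d_\s(X_t,\lambda_\g)\le d_\s(\beta,\alpha)\le 1$, contradicting the previous paragraph; hence $\beta$ crosses $\alpha$. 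As $\beta$ also crosses $\lambda_\g$ by (i) and is $\ep_B$--short, hence $\ep_h$--short, I would run the corridor constructions of \lemref{Corridor1} and \lemref{Corridor2} inside $R_0$ --- with $\beta$ as the $\ep_h$--short curve and using that $\pi_{R_0}(\beta)$ fellow travels $\pi_{R_0}(\lambda_\g)$ --- to build a corridor whose horizontal sides run along a leaf of $\lambda_\g$ and which $\alpha$ crosses from vertical to vertical sides, certifying $\alpha$ as $(n_0,L_0)$--horizontal on $X_t$. In case (T1) we have $\ell_t(\bd R_0)\le\rho$, so $\ell_t(\alpha)\le\rho$; since $\alpha$ avoids every $\ep_w$--short curve of $X_t$, it lies in an $\ep_w$--thick subsurface $Y\subseteq\s$ with $\ell_t(\bd Y)\le\ep_w$ or $\bd Y=\emptyset$, and I would repeat the quadrilateral argument from the proof of \propref{Trichotomy} (via \lemref{Quad}), carried out inside $R_0$ relative to the short marking $\mu_Y$ of $Y$, to locate either an $\ep_h$--short curve of $X_t$ that $(n_0,L_0)$--shadows a long segment of $\alpha$ along a leaf of $\lambda_\g$ (whence $\alpha$ is $(n_0,L_0)$--horizontal by \lemref{Corridor2}) or an $\ep_w$--short curve of $X_t$ meeting both $\alpha$ and $\lambda_\g$ (whence $\alpha$ is weakly horizontal). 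Since the trichotomy is invoked afresh at each $t\ge a$, no separate propagation in time is required.

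The main obstacle is precisely the two degenerate alternatives (T1) and (T3): the trichotomy only asserts that \emph{some} curve disjoint from $\lambda_\g$ is horizontal, whereas we need this for $\alpha$ itself, so the combinatorial surgeries behind \propref{Trichotomy} and \lemref{Corridor1} must be re-run ``relative to $\bd R_0=\alpha$'', and the output corridors anchored so that their horizontal sides land on $\lambda_\g$ while $\alpha$ crosses them side to side. The recurring technical point is the comparison between $\calC(R_0)$-- and $\calC(\s)$--distances, exploited through $\bd R_0=\alpha\in\pi_\s(\lambda_\g)$; and the implication $d_\s(X_t,\lambda_\g)>1\Rightarrow\ell_t(\alpha)>\ep_w$ is what prevents this comparison from degenerating.
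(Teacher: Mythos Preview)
Your reductions are sound: applying \lemref{AI} with $R=\s$ does give $d_\s(X_t,\lambda_\g)>1$, and your observation that for a curve disjoint from $\lambda_\g$ the weak-horizontal condition reduces to finding an $\ep_w$--short anchor meeting both is correct. The trouble is in the case analysis.

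In case (T3) you have only that some $\ep_B$--short $\beta$ crosses both $\alpha$ and $\lambda_\g$ and that an arc of $\beta$ in $R_0$ is within distance $1$ of $\pi_{R_0}(\lambda_\g)$. This is a \emph{smallness} condition; the corridor machinery of \lemref{Corridor1} needs $\I_R(\tau,\omega)\ge C(n_0,L_0)$, a \emph{largeness} condition, and nothing in your setup produces many intersections or any fellow-travelling between $\alpha$ and a leaf of $\lambda_\g$. In case (T1) you propose to rerun the quadrilateral argument of \propref{Trichotomy}, but that argument is powered by \lemref{Quad}, whose hypothesis is $\ell_X(\bd R)>\rho$; here (T1) gives exactly the opposite, $\ell_t(\alpha)\le\rho$, so no wide quadrilateral is available and the argument does not start. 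Both branches are genuine gaps, not just details.

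The missing idea is to apply \lemref{AI} to $R_0$ rather than to $\s$. Since $\lambda_\g\subset R_0$ and the inclusion $\calC(R_0)\hookrightarrow\calC(\s)$ is $1$--Lipschitz, one has $d_{R_0}(X_a,\lambda_\g)\ge d_\s(X_a,\lambda_\g)\ge 4B$, and \lemref{AI} then rules out (T3) for $R_0$ at every $t\ge a$. The trichotomy thus collapses to (T1) or (T2). Assuming (T1) for a contradiction, the paper feeds the conclusion $\ell_t(\alpha)\le\rho$ into the active-interval machinery for $R_0$ (\thmref{AI}): either the interval is non-trivial, forcing $\ell_a(\alpha)\le\rho$ and hence $d_\s(X_a,\lambda_\g)\le B+1<4B$; or it is trivial at $a$, so $\alpha$ is already horizontal on $X_a$, and then \thmref{Whor} and \propref{Growth} make $\ell_t(\alpha)$ grow past $\rho$ in bounded time, while in that bounded window $d_\s(X_a,X_t)$ stays small enough to again contradict $d_\s(X_a,\lambda_\g)\ge 4B$. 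Either way one reaches a contradiction, so (T2) must hold. Your direct geometric constructions in (T1) and (T3) should be replaced by this contradiction argument.
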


  \begin{proof}
    
    Since $\alpha$ is disjoint from $\lambda_\g$, $R=S \setminus \alpha$
    contains $\lambda_\g$. The inclusion map $\iota : \calC(R)\to \calC(S)$
    is a contraction, so $d_R(X_a,\lambda_\g) \ge 4B$. Thus, for all $t \ge
    a$, by Lemma \ref{Lem:AI}, $d_R(X_t,\lambda_\g) > 1$. We assume that
    $\alpha$ is not horizontal on $X_t$ to derive a contradiction. This
    assumption leads to $\ell_t(\alpha) \leq \rho$ by Proposition
    \ref{Prop:Trichotomy}, and also that $d_S(X_t,\alpha) \le B$, by
    definition of $B$ as in Lemma \ref{Lem:ConstantB}.
    
    Consider the active interval for $R$ along $\g$, which must be of the
    form $[a,d]$ for some $d \le b$. We now analyze the two cases that
    either $[a,d]$ is a trivial or non-trivial interval. 

    If $a<d$, then $\ell_a(\alpha) \le \rho$ by Theorem \ref{Thm:AI}. In
    this case, $d_S(X_a,\alpha) \le B$ by the definition of $B$. But then
    \[ d_S(X_a,\lambda_\g) \le d_S(X_a,\alpha) + d_S(\alpha,\lambda_\g) \le
    B+1 < 2B,\] contradicting the assumption.

    If $a=d$, then $\alpha$ is horizontal on $X_a$. Let $s \in [a,b]$ be
    the first time such that $\alpha$ is $(n_0,L_0)$--horizontal on $X_s$;
    and set $s=b$ if no such $s$ exists.  It is not possible for $t >
    s+s_1$, where $s_1 \ge \log \frac{\rho}{w_{\ep_h}}$. This is because
    on  $X_s$, $\alpha$ must intersect some $\ep_h$--short curve, so
    $\ell_s(\alpha) \ge 2w_{\ep_h}$. But then $\ell_t(\alpha) > \rho$ by
    Proposition \ref{Prop:Growth}, which is not possible. On the other
    hand, $\diam_S(\g([a,s])) \le 1$ and $\diam_S(\g([s,s+s_1]) \le B$.
    So if $t \in [a,s+s_1]$, then 
    \begin{align*}
      d_S(X_a,\lambda_\g) 
      & \le d_S(X_a,X_t) + d_S(X_t,\alpha) + d_S(\alpha,\lambda_\g) + 4 \\
      & \le (B+1)+B+1+4 \\
      & =2B+6 < 4B.
    \end{align*}
    This again contradicts the assumption. The proof that $\alpha$ must be
    horizontal on $X_t$ is now complete. \qedhere   
  
  \end{proof}
  
  Now we restate Theorem \ref{Thm:filling} from the introduction and prove 
  in as a corollary of our results.
   
  \begin{corollary}

    Let $\g \colon [a,b] \to \T(\s)$ be a Thurston geodesic. If $\lambda_\g$ is
    nearly filling on $X_a$, then for any subsurface $R \subseteq \s$,
    the set $\Upsilon_R(\g([a,b]))$ is a reparametrized quasi-geodesic in
    $\calC(R)$.
    
  \end{corollary}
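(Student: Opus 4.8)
The plan is to split the argument according to whether $R$ intersects $\lambda_\g$ essentially. The point is that when $R$ does meet $\lambda_\g$ the statement is already contained in the active--interval results of this section and uses nothing about $\lambda_\g$ being nearly filling, whereas the nearly--filling hypothesis is exactly what controls the subsurfaces that $\lambda_\g$ misses.

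First suppose $\pi_R(\lambda_\g)=\emptyset$. Then $R$ is a proper subsurface (a nonempty lamination always meets $\s$ essentially), and, realizing $\bd R$ and $\lambda_\g$ as geodesics in a fixed hyperbolic metric, no leaf of $\lambda_\g$ crosses $\bd R$; thus every component of $\bd R$ is a curve disjoint from $\lambda_\g$. Since $\lambda_\g$ is nearly filling on $X_a$, I would apply \thmref{Filling} to each component of $\bd R$ to conclude that the multicurve $\bd R$ is horizontal on $X_a$, and then invoke \propref{Horizontal2} to get $\diam_R(\g([a,b]))=O(1)$. A path of uniformly bounded diameter is (trivially) a reparametrized quasi-geodesic, and this finishes the case.

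Now suppose $R$ intersects $\lambda_\g$ essentially. If $R\subsetneq\s$, the active interval $J_R=[c,d]\subseteq[a,b]$ of \defref{AI} is defined; by \corref{Reparam} the shadow $\Upsilon_R(\g([c,d]))$ is a reparametrized quasi-geodesic in $\calC(R)$, and by \thmref{AI}\ref{ai1} both $\diam_R(\g([a,c]))$ and $\diam_R(\g([d,b]))$ are uniformly bounded. I would then observe that adjoining two uniformly bounded--diameter pieces at the ends of a reparametrized quasi-geodesic in the Gromov--hyperbolic space $\calC(R)$ again produces a reparametrized quasi-geodesic --- immediate from the three--point characterization of reparametrized quasi-geodesics recalled just after \thmref{Reparam}, applied to $\Upsilon_R\circ\g$ on $[a,b]$ --- so $\Upsilon_R(\g([a,b]))$ is a reparametrized quasi-geodesic; this is precisely the final observation of Remark~\ref{BalancedTime}. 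When $R=\s$ the active interval degenerates, and instead I would run the balanced--time retraction of \thmref{LipRetract} over the whole segment $[a,b]$: here $\bd\s=\emptyset$, so the length constraints on $\bd R$ appearing in \propref{Lipschitz} and \propref{Retraction} are automatic, and $d_\s(X_t,\lambda_\g)>1$ for every $t\in[a,b]$ by \lemref{AI} (using $d_\s(X_a,\lambda_\g)\ge 4B$); this produces a $K$--Lipschitz retraction $\calC(\s)\to\Upsilon_\s(\g([a,b]))$, and \thmref{Reparam} concludes.

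The bulk of the work has already been carried out in \thmref{Filling}, \propref{Horizontal2}, \thmref{AI}, \corref{Reparam} and \thmref{LipRetract}, so the remaining points are elementary: the reduction $\pi_R(\lambda_\g)=\emptyset\Rightarrow\bd R$ is a multicurve disjoint from $\lambda_\g$; the routine bookkeeping that a reparametrized quasi-geodesic in a hyperbolic space stays one after adjoining bounded--diameter caps; and --- for the statement to include $R=\s$ --- that the surgery argument in \propref{Retraction} behaves correctly in the absence of a boundary. I expect that last item, the degenerate case $R=\s$, to be the only place requiring any genuine care.
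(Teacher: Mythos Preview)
Your proof is correct and follows essentially the same two--case split as the paper: when $\lambda_\g$ misses $R$, apply \thmref{Filling} and \propref{Horizontal2}; when $\lambda_\g$ intersects $R$, invoke \corref{Reparam} (together with Remark~\ref{BalancedTime} to pass from $[c,d]$ to $[a,b]$). The only difference is that you single out $R=\s$ for separate treatment, whereas the paper simply cites \corref{Reparam} uniformly---implicitly reading the active--interval machinery with $\bd\s=\emptyset$, so that the boundary conditions in \propref{Trichotomy}, \propref{Lipschitz}, and \propref{Retraction} are vacuous and $[c,d]=[a,b]$ (your own observation); your extra care is harmless but not needed.
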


  \begin{proof}

    If $\lambda_\g$ intersects $R$, then the
    conclusion follows from \corref{Reparam}. If $\lambda_\g$ does not
    intersect $R$, then it is disjoint from $\bd R$. By \thmref{Filling},
    $\bd R$ is horizontal and the conclusion follows because
    $\diam_R(\g[a,b])$ is bounded by \propref{Horizontal2}. \qedhere

  \end{proof}


  \bibliographystyle{amsalpha}
  \bibliography{main}
  \end{document}